\definecolor{darkgreen}{RGB}{65,165,118}
\definecolor{darkred}{RGB}{180,0,0}
\let\xto=\xrightarrow
\newcommand{\higherlim}[2]{\displaystyle\setbox1=\hbox{\rm lim}
	\setbox2=\hbox to \wd1{\leftarrowfill} \ht2=0pt \dp2=-1pt
	\setbox3=\hbox{$\scriptstyle{#1}$}
	\def\test{#1}\ifx\test\empty
	\mathop{\mathop{\vtop{\baselineskip=5pt\box1\box2}}}\nolimits^{#2}
	\else
	\ifdim\wd1<\wd3
	\mathop{\hphantom{^{#2}}\vtop{\baselineskip=5pt\box1\box2}^{#2}}_{#1}
	\else
	\mathop{\mathop{\vtop{\baselineskip=5pt\box1\box2}}_{#1}}%
	\nolimits^{#2}
	\fi\fi}
\let\oldcirc=\circ
\renewcommand{\circ}{\mathchoice
    {\mathbin{\scriptstyle\oldcirc}}{\mathbin{\scriptstyle\oldcirc}}
    {\mathbin{\scriptscriptstyle\oldcirc}}
    {\mathbin{\scriptscriptstyle\oldcirc}}}
\numberwithin{equation}{section}
\mathchardef\cdot="0201
\renewenvironment{enumerate}[1][]
{\begin{enumerat}[#1]\setlength{\itemsep}{6pt}}{\end{enumerat}}
\renewenvironment{itemize}
{\begin{itemiz}\setlength{\itemsep}{6pt}\setlength{\itemindent}{-20pt}}
{\end{itemiz}}
\def\beq#1\eeq{\begin{equation*}#1\end{equation*}}
\def\beqq#1\eeqq{\begin{equation}#1\end{equation}}
\let\emptyset=\varnothing
\DeclareMathAlphabet\EuR{U}{eur}{m}{n}
\SetMathAlphabet\EuR{bold}{U}{eur}{b}{n}
\newcommand{\curs}{\EuR}
\newcommand{\Ab}{\curs{Ab}}
\renewcommand{\:}{\colon}
\newlength{\upto}\newlength{\dnto}
\newcounter{let} \setcounter{let}{0}
\loop\stepcounter{let}
\edef\csname cal\alph{let}\endcsname%
\newcommand{\tdef}[2][]{\expandafter\newcommand\csname#2\endcsname%
{#1\textup{#2}}}
\newcommand{\colim}{\mathop{\textup{colim}}\limits}
\newcommand{\fdef}[1]{\expandafter\newcommand\csname#1\endcsname%
{\mathfrak{#1}}}
\newcommand{\bbdef}[1]{\expandafter\newcommand%
\csname#1\endcsname{\mathbb{#1}}}
\newcommand{\defeq}{\overset{\textup{def}}{=}}
\newcommand{\aaa}{\mathbf{a}}
\newcommand{\sss}{\mathbf{s}}
\newcommand{\eee}{\mathbf{e}}
\newtheorem{Thm}{Theorem}[section]
\newtheorem{Prop}[Thm]{Proposition}
\newtheorem{Cor}[Thm]{Corollary}
\newtheorem{Lem}[Thm]{Lemma}
\newtheorem{Defi}[Thm]{Definition}
\newtheorem{Rma}[Thm]{Remark}
\newtheorem{Exmp}[Thm]{Example}
\newtheorem{Th}{Theorem}
\newcommand{\longleft}[1]{\;{\leftarrow%
\count255=0 \loop \mathrel{\mkern-6mu}%
    \relbar\advance\count255 by1\ifnum\count255<#1\repeat}\;}
\newcommand{\longright}[1]{\;{\count255=0 \loop \relbar\mathrel{\mkern-6mu}%
    \advance\count255 by1\ifnum\count255<#1\repeat\rightarrow}\;}
\newcommand{\RIGHT}[3]{\mathrel{\mathop{\kern0pt\longright{#1}}
    \limits^{#2}_{#3}}}
\newcommand{\LEFT}[3]{\mathrel{\mathop{\kern0pt\longleft{#1}}\limits^{#2}_{#3}}
}
\newcommand{\longleftright}[1]{\;{\leftarrow\mathrel{\mkern-6mu}%
    \count255=0\loop\relbar\mathrel{\mkern-6mu}%
    \advance\count255 by1\ifnum\count255<#1\repeat\rightarrow}\;}
\newcommand{\onto}[1]{\;{\count255=0 \loop \relbar\joinrel
    \advance\count255 by1
    \ifnum\count255<#1 \repeat \twoheadrightarrow}\;}
\newcommand{\RLEFT}[3]{\mathrel{%
   \mathop{\vcenter{\baselineskip=0pt\hbox{$\kern0pt\longright{#1}$}%
   \hbox{$\kern0pt\longleft{#1}$}}}\limits^{#2}_{#3}}}
\theoremstyle{Defi}
\newtheorem{Ex}[Thm]{Example}
\def\Top{\curs{Top}}
\def\hocolim{\mathop{\rm hocolim}}
\newcommand{\xxto}[1]{\mathrel{\mathop{%
  \setbox0\hbox{$\ {\scriptstyle#1}\ $}%
  \hbox to \wd0{\rightarrowfill}}^{#1}}%
}
\newcommand{\xlto}[2][]{%
  \mathrel{\mathop{%
    \setbox0\vbox{
      \hbox{$\scriptstyle\;\;{#1}\;\;$}%
      \hbox{$\scriptstyle\;\;{#2}\;\;$}%
    }%
    \hbox to\wd0{\leftarrowfill}\displaystyle}%
  \limits^{#2}\ifx{#1}{}\else{_{#1}}\fi}%
}
\newcommand{\obj}{\mathrm{Obj}}
\newcommand{\inc}{\mathrm{inc}}
\newcommand{\Modk}{\mathbf{Mod}_\Bbbk}
\newcommand{\Ima}{\operatorname{\mathrm{Im}}}
\newcommand{\bXbA}{\mathbf{X}, \mathbf{A}}
\newcommand{\hadgesh}[1]{\emph{\textcolor{blue}{#1}}}
\newcommand{\ini}{*}
\newcommand{\nempty}{\neq\emptyset}
\newcommand{\eempty}{=\emptyset}
\title{Polyhedral Products over Finite Posets}
\author{Daisuke Kishimoto}
\address{Department of Mathematics, Kyoto University, Kyoto, 606-8502, Japan}
\email{kishi@math.kyoto-u.ac.jp}
\author{Ran Levi}
\address{Institute of Mathematics, University of Aberdeen, Aberdeen, UK}
\email{r.levi@abdn.ac.uk}
\subjclass[2010]{Primary 13F55, 55U10, 52B05. Secondary 06A07, 06A12}
\keywords{Finite posets, polyhedral products, Stanley-Reisner rings, Cohomology, Higher limits}
\begin{document}

\begin{abstract}
Polyhedral products were defined by Bahri, Bendersky, Cohen and Gitler, to be spaces obtained as  unions of certain product spaces indexed by the simplices of an abstract simplicial complex. In this paper we give a very general homotopy theoretic construction of polyhedral products over arbitrary pointed posets. We show that under certain restrictions on the poset $\calp$, that include all known cases, the cohomology of the resulting spaces can be computed as an inverse limit over  $\calp$ of the cohomology of the building blocks. This motivates the definition of an analogous algebraic construction - the polyhedral tensor product.  We show that for a large family of posets, the cohomology of the polyhedral product is given by the polyhedral tensor product. We then restrict attention to polyhedral posets, a family of posets that include  face posets of  simplicial complexes, and simplicial posets, as well as many others.   We define the Stanley-Reisner ring of a polyhedral poset and show that, like in the classical cases,  these rings occur as the cohomology of certain polyhedral products over the  poset in question. For any pointed poset $\calp$ we construct a simplicial poset $s(\calp)$, and show that if $\calp$ is a polyhedral poset then polyhedral products over $\calp$ coincide up to homotopy with the corresponding polyhedral products over $s(\calp)$.
\end{abstract}

\baselineskip.525cm

\maketitle



Spaces constructed by basic topological operations from simpler ingredients have played an important role in homotopy theory since the early days of the subject. One example of such a family of space is polyhedral products - spaces obtained by ``gluing together" finite cartesian products of spaces according to a ``recipe" encoded in a simplicial complex. The concept was first defined by Bahri, Bendersky, Cohen, and Gitler \cite{BBCG} as a generalisation of two families of spaces that arose in the context of toric topology, namely the so called Davis-Januszkiewicz spaces and the closely related moment-angle complexes  \cite{DJ,BP}.  Earlier example of similar ideas are present in works of Coxeter \cite{C},  Porter \cite{P} and Anick \cite{A}.

Following these early works, polyhedral products were studies intensely, and were shown to exhibit remarkably regular behavior and relations with many topics in homotopy theory (See for instance \cite{GPTW, GT, HST, IK4, K1, KT, NR, PRV} for a partial list). Polyhedral products are topological spaces that are constructed by gluing together other spaces according to a ``recipe'' given by the combinatorial data in a simplicial complex. As such they are also of potential interest in applied algebraic topology, where one uses real data to generate associated topological objects, the properties of which inform on the generating data. This kind of applications may involved combinatorial objects that are not simplicial complexes, and defining polyhedral products in a more general context is part of the motivation for this paper.

In this paper we define polyhedral products in a more general context. Classically, polyhedral products are defined as colimits  of certain functors over the face poset of a simplicial complex.  Notbohm and Ray \cite{NR} showed that if one uses the homotopy colimit instead of the colimit, the resulting spaces coincide up to homotopy, and one has the additional computational advantage afforded by the Bousfield-Kan spectral sequence  \cite[p.\;336]{BK}. In particular they show that the cohomology of the Davis-Januszkiewicz space, known to be isomorphic to the Stanley-Reisner ring of the complex $K$,  is given by an inverse limit, while the higher derived functors of that inverse limit vanish in this case.

The staring point of our generalisation is the observation that the type of functors used in the standard definition of a polyhedral product  easily generalises to any finite pointed poset (a poset with an initial object $*$).
If $\calp$ is s finite pointed poset, then the \hadgesh{vertex set of $\calp$} is the set $V_\calp$ of all  $v\in\obj(\calp)$, such that  $a\lneq v$, if and only if $a=*$. For such a poset $\calp$ and a collection of pairs of spaces $(\bXbA) = \{(X_v,A_v)\;|\; v\in V_\calp\}$ we define a functor $\calz^{\calp}_{\bXbA}\colon\calp\to\Top$, and the polyhedral product $\calz_\calp(\bXbA)$ of the collection $(\bXbA)$ over $\calp$ is defined to be the homotopy colimit of $\calz^{\calp}_{\bXbA}$ over $\calp$. We show that in the known cases it coincides up to homotopy with existing models.

The definition of the polyhedral product motivates an algebraic analog - a \hadgesh{polyhedral tensor product}. One of the main aims of this paper is to show that under rather general hypotheses the cohomology of a polyhedral product is given by the corresponding polyhedral tensor product. To a finite pointed poset $\calp$, a commutative ring with a unit $\Bbbk$, and a collection of morphisms $\aaa = \{a_v\colon M_v\to N_v\;|\; v\in V_\calp\}$ in $\Modk$,  we associate a functor $\calt_{\calp,\aaa}\colon\calp\op\to \Modk$. The \hadgesh{polyhedral tensor product of $\aaa$ over $\calp$} is defined to be the inverse limit over $\calp$ of the functor $\calt_{\calp,\aaa}$ (Definition \ref{Def-PolyTensorProd}). The main example appears as the 0-th column in the  Bousfield-Kan spectral sequence that computes $H^*(\calz_\calp(\bXbA),\Bbbk)$.

To examine the cases in the cohomology of a polyhedral product is given by  the corresponding polyhedral tensor product we need a vanishing result for higher limits over pointed posets. Our first theorem states that for any finite pointed poset and a certain family of functors on it, this is always the case.

\begin{Th}
\label{Intro-vanishing}
  Let $\calp$ be a finite pointed poset, and let $F\colon \calp\op\to\Ab$ be a functor with a lower factoring section $S$ (see Definition \ref{lower factoring}). Then $F$ is acyclic, i.e.
  \[\higherlim{\calp}{n}F=0\]
  for all $n>0$.
\end{Th}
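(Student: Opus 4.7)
I would argue by induction on the cardinality of $\calp$. For $\calp=\{*\}$ the assertion is trivial, since $\invlim{\calp}F = F(*)$ and every higher limit vanishes. For the inductive step, choose a maximal element $q \in \calp \smallsetminus \{*\}$ and set $\calp' = \calp \smallsetminus \{q\}$ and $\calp_{<q} = \{p \in \calp : p < q\}$; both are finite pointed sub-posets of $\calp$ of strictly smaller cardinality (each containing $*$, since $q \neq *$). Write $F'$ and $F_{<q}$ for the restrictions of $F$ to $\calp'\op$ and $(\calp_{<q})\op$, respectively.

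Filtering the standard cochain complex $C^*(\calp;F)$ computing $\higherlim{\calp}{*}F$ by the subcomplex $A^*$ of cochains supported on chains containing $q$ (which is a genuine subcomplex because $q$ is maximal) yields a short exact sequence
\[
0 \to A^* \to C^*(\calp;F) \to C^*(\calp';F') \to 0.
\]
A direct analysis---using that all higher limits over the poset $\calp_{<q} \cup \{q\}$ with $q$ adjoined as a maximum vanish---identifies $H^n(A^*) \cong \higherlim{\calp_{<q}}{n-1}F_{<q}$ for $n \geq 2$, while $H^0(A^*)$ and $H^1(A^*)$ sit in an exact sequence linking $F(q)$ with $\invlim{\calp_{<q}}F_{<q}$. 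Under the assumption that the restrictions $F'$ and $F_{<q}$ inherit lower factoring sections from $S$---a check to be made directly against Definition \ref{lower factoring}---the inductive hypothesis gives $\higherlim{\calp'}{n}F' = 0$ and $\higherlim{\calp_{<q}}{n}F_{<q} = 0$ for every $n > 0$, and the associated long exact sequence then immediately forces $\higherlim{\calp}{n}F = 0$ for all $n \geq 2$.

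The main obstacle is the remaining case $n = 1$, where the long exact sequence reduces vanishing of $\higherlim{\calp}{1}F$ to a surjectivity statement of the form: every compatible family on $\calp_{<q}$ must extend, after a correction by an element of $F(q)$, to a compatible family on $\calp'$. This extension property is precisely where the lower factoring section $S$ is expected to enter decisively---$S$ should canonically produce such an extension at $q$---and unpacking the mechanism by which Definition \ref{lower factoring} supplies it, together with confirming that lower factoring sections are stable under passage to sub-posets of the form $\calp'$ and $\calp_{<q}$, constitutes the technical heart of the argument.
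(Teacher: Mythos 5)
Your strategy is genuinely different from the paper's, and that is worth noting before addressing the gap. The paper works by \emph{removing minimal objects}, not maximal ones: starting from the assumption, one first quotients by the ``constant'' subfunctor $F_0(x)=S(\iota_{*,x})(F(*))$, then for a minimal object $u$ one uses the lower factoring section (via Lemma~\ref{locally constant}) to build a subfunctor $G_u\le G = F/F_0$ that is locally constant on $\calp_{\ge u}$ and vanishes elsewhere. Both $F_0$ and $G_u$ are acyclic by Proposition~\ref{A_x}, so the long exact sequences of Lemma~\ref{subfunctor_exact_sequence} transfer acyclicity to $G/G_u$, which vanishes at $*$ and $u$ and can therefore be moved to $\calp\setminus\{u\}$ by Lemma~\ref{restriction higher limit}. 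Because the quotient $G/G_u$ keeps a lower factoring section, the reduction is clean in all cohomological degrees at once. In contrast, you strip away a maximal element $q$ and filter the cochain complex by chains through $q$; here the long exact sequence does not close itself up, precisely because the ``new'' complex $A^*$ has a nonvanishing $H^1(A^*)\cong\operatorname{coker}\bigl(F(q)\to\lim_{\calp_{<q}}F\bigr)$.

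That brings us to the genuine gap: you correctly locate the difficulty at $n=1$, but you do not resolve it, and it is not a routine matter. Making $\higherlim{\calp}{1}F$ vanish requires showing that the composite $\lim_{\calp'}F'\to\lim_{\calp_{<q}}F\to H^1(A^*)$ is surjective, i.e.\ every compatible family on $\calp_{<q}$ can, after correcting by an element of $F(q)$, be extended to a compatible family on \emph{all} of $\calp'$, simultaneously at every maximal $w\not\ge q$. A naive use of the section --- e.g.\ setting $a_w=S(\iota_{v,w})(b_v)$ for some $v<w$ --- fails already for three minimal objects below two maximal ones: the definition of $a_w$ depends on the choice of $v$, and compatibility along the other branches is not automatic. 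One must first normalize by a carefully chosen $c\in F(q)$ (forcing the family to vanish at $*$, say), then glue contributions from different sections. None of this is done; the phrase ``$S$ should canonically produce such an extension'' is an expectation, not an argument. So the proposal, as written, does not establish the theorem. The point of the paper's minimal-object approach is exactly that it sidesteps this extension problem: the subfunctor $G_u$ together with Lemma~\ref{restriction higher limit} handles every degree $n\ge 1$ uniformly, with no separate low-degree analysis.

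Two smaller remarks. First, the check that the restrictions of $S$ to $\calp'$ and to $\calp_{<q}$ are lower factoring sections is indeed straightforward (any lower bound $w$ produced in $\calp$ for a pair in $\calp'$ or $\calp_{<q}$ automatically lies in the sub-poset, since $q$ is maximal), but it needs to be recorded, not merely flagged. Second, your identification $H^n(A^*)\cong\higherlim{\calp_{<q}}{n-1}F_{<q}$ for $n\ge 2$ is correct --- it follows from comparing $A^*$ with $C^*(\calp_{\le q};F)$, whose higher cohomology vanishes since $\calp_{\le q}$ has a terminal object --- so the $n\ge 2$ part of your argument does go through once the restriction lemma is in place.
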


Theorem \ref{Intro-vanishing} applies to any finite pointed poset, but imposes a nontrivial restriction on the functor $F$. We next show that under certain restrictions on the poset $\calp$, and a mild assumption on the collection $\aaa$ of morphisms in $\Modk$, the functor $\calt_{\calp;\aaa}$ is acyclic.

\begin{Th}
  \label{Intro-T vanishing}
 Let $\calp$ be a lower saturated poset and let $\aaa=\{a_v\colon M_v\to N_v\;|\; v\in V_\calp\}$ be a collection of morphisms in $\Modk$ that has  a section (see Section \ref{Sec Functors with a section}). Then for each $n>0$,
  $$\higherlim{\calp}{n}\calt_{\calp;\aaa}=0.$$
\end{Th}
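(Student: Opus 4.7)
The plan is to reduce the statement directly to Theorem \ref{Intro-vanishing} by exhibiting a lower factoring section of the functor $\calt_{\calp;\aaa}\colon\calp\op\to\Modk$. Once such a section is produced, the theorem gives the vanishing of $\higherlim{\calp}{n}\calt_{\calp;\aaa}$ for all $n>0$ as an immediate consequence.

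First I would unpack Definition \ref{Def-PolyTensorProd} to see how $\calt_{\calp;\aaa}(v)$ is built as a tensor product whose factors, indexed by $V_\calp$, are either $M_u$ or $N_u$ according to the relationship between $u$ and $v$ in $\calp$, with the structure maps $\calt_{\calp;\aaa}(u\leq v)$ induced factorwise by identities and by the morphisms $a_w$ on the appropriate tensor slots. Because $\aaa$ has a section, each $a_v$ admits a retraction $s_v\colon N_v\to M_v$ with $a_v\circ s_v=\Id_{N_v}$. Tensoring these retractions slot by slot produces a candidate family of maps $S_v$, one at each object $v\in\calp$; my aim is to show these assemble into a section $S$ of $\calt_{\calp;\aaa}$ that is lower factoring.

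The heart of the argument has two steps. Naturality, i.e.\ that the $S_v$ define a morphism of functors $\calp\op\to\Modk$, should reduce on each tensor factor to the identity $a_u s_u=\Id_{N_u}$ or to a trivial compatibility on a factor where both source and target are $M_u$. Verifying the lower factoring condition of Definition \ref{lower factoring} is where the hypothesis that $\calp$ is lower saturated is used: lower saturation guarantees that for every $v\in\calp$ the principal lower set $\calp_{\leq v}$ contains enough vertices of $V_\calp$ so that the restriction of $\calt_{\calp;\aaa}$ to $\calp_{\leq v}$ already captures the tensor factors through which $S_v$ is supposed to factor. In effect, lower saturation is the precise combinatorial condition that ensures the factorwise retraction data matches the diagrams demanded by the lower factoring property.

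The main obstacle I anticipate is the bookkeeping: tracking, at each object $v$, which tensor factors of $\calt_{\calp;\aaa}(v)$ are $M_u$ and which are $N_u$, and then checking that composing with $S$ lands in the image of the appropriate restriction map coming from a principal downset of $\calp$. This is essentially a combinatorial verification, but it is the only place where the hypotheses on $\calp$ and on $\aaa$ genuinely interact. Once this verification is complete, Theorem \ref{Intro-vanishing} applied to $F=\calt_{\calp;\aaa}$ and the section $S$ finishes the proof.
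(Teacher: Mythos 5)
Your proposal follows the paper's argument exactly: the paper defines the candidate section $\cals_{\calp,\sss}$ factorwise from the section $\sss$ of $\aaa$, proves in Lemma \ref{lower saturated} that lower saturation makes $\cals_{\calp,\sss}$ a lower factoring section of $\calt_{\calp,\aaa}$, and then invokes Theorem \ref{vanishing}. One small clarification on where lower saturation actually enters, since your phrasing in terms of a single principal downset $\calp_{\le v}$ is a bit off: for a pair $x,y\le z$, lower saturation supplies $w\in[x\wedge y]$ with $V(w)=V(x)\cap V(y)$, and it is this $w$ that plays the role of the lower factoring object in Definition \ref{lower factoring} because on the $v$-th tensor slot both composites reduce to $1_{M_v}$, $a_v$, $s_v$, or $1_{N_v}$ according as $v$ lies in $V(x)\cap V(y)$, $V(x)\setminus V(y)$, $V(y)\setminus V(x)$, or the complement of $V(x)\cup V(y)$.
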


Theorems \ref{Intro-vanishing} and \ref{Intro-T vanishing}  can be viewed as vanishing results for sheaf cohomology in certain circumstances, where the functors $F$ and $\calt_{\calp, \aaa}$ are considered as sheaves of $\Bbbk$-modules on $\calp$, equipped with the standard Alexandroff topology (compare \cite{B,BR,Y}). Here, as the main application of Theorem \ref{Intro-T vanishing} we obtain a very general result that expresses the cohomology of a polyhedral product as the polyhedral tensor product of the appropriate collection of morphisms.

\begin{Th}
\label{Intro-Thm-chlgy_lower_sat}
  Let $\calp$ be a lower saturated poset, let $(\bXbA)=\{(X_v,A_v)\;|\;v\in V_\calp\}$ be a collection of pairs of spaces, and let $h^*$ be a generalised cohomology theory that satisfies the strong form of the K\"unneth formula. Let $\aaa=\{a_v\colon h^*(X_v)\to h^*(A_v)\;|\; v\in V_\calp\}$ be the collection of maps induced by the inclusions, and assume that $a_v$ has a section for each $v$. Then
  $$h^*(\calz_\calp(\bXbA))\cong \calt_\calp(\aaa).$$
\end{Th}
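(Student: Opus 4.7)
The plan is to apply the Bousfield--Kan cohomology spectral sequence to the homotopy colimit definition $\calz_\calp(\bXbA) = \hocolim_\calp \calz^\calp_{\bXbA}$ and then use Theorem \ref{Intro-T vanishing} to collapse it. Concretely, for a homotopy colimit over the finite poset $\calp$ there is a strongly convergent spectral sequence
\[
E_2^{p,q} = \higherlim{\calp}{p}\bigl(h^q \circ \calz^\calp_{\bXbA}\bigr) \;\Longrightarrow\; h^{p+q}\bigl(\calz_\calp(\bXbA)\bigr),
\]
in which $h^q \circ \calz^\calp_{\bXbA}$ is regarded as a functor $\calp\op \to \Modk$ (graded over the coefficient ring of $h^*$).

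My first step is to identify the functor $h^* \circ \calz^\calp_{\bXbA}$ with the polyhedral tensor product functor $\calt_{\calp;\aaa}$. By construction, each $\calz^\calp_{\bXbA}(v)$ is a finite cartesian product whose factor at $w \in V_\calp$ is either $X_w$ or $A_w$, determined by the relation of $w$ to $v$, and the structure maps induced by $v \le v'$ in $\calp$ are products of identity maps with the inclusions $A_w \hookrightarrow X_w$. Applying $h^*$ and invoking the strong K\"unneth formula converts each such product of spaces into the corresponding tensor product of $h^*$-modules, and each structure map into a tensor product of identities with the maps $a_w$. Matching these data against the definition of $\calt_{\calp;\aaa}$ (Definition \ref{Def-PolyTensorProd}) produces a natural isomorphism of functors $\calp\op \to \Modk$.

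The second step is a direct application of Theorem \ref{Intro-T vanishing}: since $\calp$ is lower saturated and $\aaa$ admits a section by hypothesis, one has $\higherlim{\calp}{p}\calt_{\calp;\aaa} = 0$ for all $p > 0$. Hence the spectral sequence collapses onto its $p=0$ column, and the edge homomorphism produces
\[
h^*\bigl(\calz_\calp(\bXbA)\bigr) \;\cong\; \higherlim{\calp}{0}\calt_{\calp;\aaa} \;=\; \calt_\calp(\aaa),
\]
which is the desired isomorphism.

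I expect the main obstacle to be the careful bookkeeping in the first step, namely making the identification $h^* \circ \calz^\calp_{\bXbA} \cong \calt_{\calp;\aaa}$ fully functorial: one must verify that the K\"unneth isomorphisms chosen at different objects $v$ are compatible with the restriction maps of the diagram, which forces the product decompositions at different vertices to match coherently. A secondary technical issue is that $h^*$ is only a generalised theory, so the tensor products appearing in $\calt_\calp(\aaa)$ are graded over the coefficient ring of $h^*$; the conclusion of Theorem \ref{Intro-T vanishing} must therefore be applied degree by degree. Convergence of the Bousfield--Kan spectral sequence is immediate because $\calp$ is finite.
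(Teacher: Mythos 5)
Your proof is correct and follows essentially the same route as the paper: invoke the Bousfield--Kan spectral sequence for the homotopy colimit, identify $h^*\circ\calz^\calp_{\bXbA}$ with $\calt_{\calp,\aaa}$ via the strong K\"unneth hypothesis, and collapse the $E_2$-page using the vanishing of higher limits from Theorem \ref{Intro-T vanishing}. In fact you are more careful than the paper at the identification step, where the paper simply asserts that $\calt_{\calp,\aaa}$ coincides with $h^*\circ\calz^\calp_{\bXbA}$ by definition, whereas you correctly flag the need to check naturality of the K\"unneth isomorphisms across the diagram.
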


We next specialise to a very specific type of posets. A finite pointed poset $\calp$ is said to be \hadgesh{polyhedral} if for every object $x\in\obj(\calp)$, the sub-poset $\calp_{\le x}$ is a lower semilattice. In previous work of K. Iriye and the first named author \cite{IK3}, polyhedral products over a lower semilattice were studied in the context of a more general family of spaces, defined as colimits over a lower semilattice. Polyhedral posets are of course more general than lower semilattices, and also include all simplicial posets. Our next theorem is an explicit calculation of the polyhedral tensor product over any polyhedral poset of a family of augmentation morphisms $\eee=\{e_v\colon P_\Bbbk[v]\to\Bbbk\;|\; v\in V_\calp\}$, where $P_\Bbbk[v]$ denotes the ring of polynomials in $v$ over $\Bbbk$.

\begin{Th}
\label{Intro-Thm-PTP-relations}
Let $\calp$ be a finite polyhedral poset. For a subset $S\subseteq\obj(\calp)$, let $[\vee S]$ denote the set of all minimal upper bounds of the set $S$ in $\calp$. Then there is an isomorphism
\[\calt_\calp(\mathbf{e}) \cong P_\Bbbk[\obj(\calp)]/I_\calp,\]
  where the ideal $I_\calp$ is generated by:
  \begin{enumerate}[\rm (a)]
    \item $*-1$; \label{Thm-PTP-relations-a}
    \item $x-y$, if $x<y$ and $V(x)=V(y)$;\label{Thm-PTP-relations-b}
    \item $\prod_{x\in S}x$, for $S\subset \obj(\calp)$ with $[\vee S]=\emptyset$;\label{Thm-PTP-relations-c}
    \item
    $$\prod_{\substack{R\subset S\\|R|\text{ is odd}}}\bigwedge R-\left(\prod_{\substack{R\subset S\\|R|\text{ is even}}}\bigwedge R\right)\cdot\left(\sum_{\substack{z\in[\vee S]\\V(z)=\bigcup_{w\in S}V(w)}}z\right),$$
    for $S\subset \obj(\calp)$ with $[\vee S]\ne\emptyset$. \label{Thm-PTP-relations-d}
  \end{enumerate}
\end{Th}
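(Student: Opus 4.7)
The plan is to produce an explicit isomorphism between $P_\Bbbk[\obj(\calp)]/I_\calp$ and the inverse limit $\calt_\calp(\mathbf{e})$. Since $e_v\colon P_\Bbbk[v]\to\Bbbk$ is the augmentation, the functor simplifies to $\calt_{\calp,\mathbf{e}}(x)\cong P_\Bbbk[V(x)]$ for each $x$, with the structure map $\calt_{\calp,\mathbf{e}}(y)\to\calt_{\calp,\mathbf{e}}(x)$ for $x\le y$ given by setting the variables in $V(y)\setminus V(x)$ to zero. An element of $\calt_\calp(\mathbf{e})$ is thus a compatible family $(p_x)_{x\in\obj(\calp)}$ with $p_x\in P_\Bbbk[V(x)]$, where $p_x$ is obtained from $p_y$ by the indicated augmentation whenever $x\le y$. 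I would define a ring homomorphism $\phi\colon P_\Bbbk[\obj(\calp)]\to\calt_\calp(\mathbf{e})$ on generators by letting the component of $\phi(z)$ at $x$ be $\prod_{v\in V(z)}v$ when $z$, or some representative of $z$ under the identification dictated by (b), lies below $x$ in $\calp$, and $0$ otherwise. Verifying that this really defines a compatible family uses the polyhedral structure: if $x\le y$ with $z\le y$, $z\not\le x$, and $V(z)\subseteq V(x)$, then forming the meet $z\wedge x$ in the lower semilattice $\calp_{\le y}$ yields an element with $V(z\wedge x)=V(z)$ which is a representative of $z$ sitting below $x$, making $\phi(z)_x=\prod_{v\in V(z)}v$ the correct value to match the augmentation of $\phi(z)_y$.

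Next I verify that each generator of $I_\calp$ lies in $\ker\phi$. Relation (a) is immediate since $V(*)=\emptyset$ gives $\phi(*)_x=1$ for all $x$. Relation (b) is built directly into the construction of $\phi$. For (c), if $[\vee S]=\emptyset$, then no $x$ can have a representative of every $w\in S$ below it, since otherwise $x$ would dominate $S$ and therefore lie above some minimal upper bound; hence at least one factor of $\prod_{w\in S}\phi(w)_x$ vanishes at every $x$. Relation (d) is the substantive content: at each fixed $x$, the meets $\bigwedge R$ for $R\subseteq S$ are interpreted inside the lower semilattice $\calp_{\le x}$, and (d) becomes a M\"obius-type inclusion--exclusion identity reorganising $\prod_{w\in S}\phi(w)_x$ in terms of the sum $\sum_{z\in[\vee S]\cap\calp_{\le x}}z$ over those minimal upper bounds with vertex set $\bigcup_{w\in S}V(w)$, weighted by the even-cardinality meets.

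For surjectivity I would take a compatible family $(p_x)$, expand each $p_x$ as a linear combination of monomials in $V(x)$, and for each monomial of support $W\subseteq V(x)$ use relation (d) together with the polyhedrality of $\calp_{\le x}$ to realise the monomial as the image of a combination of generators drawn from $[\vee W]\cap\calp_{\le x}$; compatibility of $(p_x)$ then ensures that the local preimages assemble into a single coherent preimage. Injectivity --- the expected main obstacle --- proceeds by reducing every element of $P_\Bbbk[\obj(\calp)]/I_\calp$ to a normal form using (b) to collapse comparable equal-vertex elements, (c) to kill products with no common upper bound, and (d) to rewrite products as sums over minimal upper bounds; one then checks that distinct normal forms map to distinct compatible families by evaluating at each $x$ and reading off coefficients. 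The principal difficulty lies in this last step: relation (d) is a genuinely multi-term identity mixing meets and minimal upper bounds, and verifying exhaustiveness of the reduction --- i.e.\ that no hidden relations survive --- requires a careful M\"obius-function analysis on each semilattice $\calp_{\le x}$.
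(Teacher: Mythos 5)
Your overall strategy matches the paper's: both approaches construct an explicit comparison map from $P_\Bbbk[\obj(\calp)]$ to the inverse limit, identify $\calt_{\calp,\mathbf{e}}(x)$ with $P_\Bbbk[V(x)]$, use the polyhedral structure (meets in each $\calp_{\le x}$) to check compatibility, and then verify that the kernel of the comparison map is exactly $I_\calp$. Your treatment of relations (a)--(d) being in the kernel --- in particular the observation that (d) localises at each $x$ to an inclusion--exclusion identity on vertex sets --- is the same calculation the paper carries out via the identity $|\bigcup A_i|=\sum_{\emptyset\neq I}(-1)^{|I|-1}|\bigcap_{i\in I}A_i|$.

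Where you diverge, and where your argument has a real gap, is in the two directions $\Ker\subseteq I_\calp$ and surjectivity. You propose a ``normal form plus M\"obius analysis on each semilattice $\calp_{\le x}$'' but explicitly concede this is left as the principal difficulty. The paper avoids this entirely by a bootstrapping trick: it first proves the theorem in the degenerate case where $\calp$ has a terminal object (so $\calt_\calp(\mathbf{e})\cong P_\Bbbk[V_\calp]$ trivially, the ideal $I_\calp$ just implements $x\mapsto\prod_{v\in V(x)}v$, and the isomorphism is elementary). For a general polyhedral $\calp$ it then defines the comparison map to have components in $P_\Bbbk[\obj(\calp_{\le x})]/I_{\calp_{\le x}}$, which by the base case are honest polynomial rings. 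For $\Ker(\pi)\subseteq I_\calp$ one reduces, monomial by monomial, to a single maximal object $z$ dominating all variables in the monomial, and observes that the restriction of $q$ then lies in the kernel of the composite $P_\Bbbk[\obj(\calp_{\le z})]\hookrightarrow P_\Bbbk[\obj(\calp)]\to P_\Bbbk[\obj(\calp_{\le z})]/I_{\calp_{\le z}}$, which is $I_{\calp_{\le z}}\subseteq I_\calp$ --- no M\"obius-function bookkeeping required. If you wish to carry out your route instead, you would need to actually produce the straightening algorithm and prove exhaustiveness, which is substantially more work; the bootstrap via $\calp_{\le z}$ is the ingredient you are missing.

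Two smaller remarks. First, you should reduce to the reduced case up front (the paper invokes Corollary \ref{Cor-polyhedral_reduced}); without reducedness the argument ``if each $w\in S$ has a representative below $x$ then $[\vee S]\neq\emptyset$'' used for relation (c) does not immediately go through, since representatives under (b) are only linked by a zig-zag. Second, your surjectivity sketch (``compatibility ensures the local preimages assemble'') is plausible but not an argument; the paper's treatment is also terse here, but it at least has the base case to anchor the assembly: the image of $\pi$ contains every monomial in the objects of $\calp$, and these visibly span the limit once one works maximal object by maximal object.
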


Motivated by Theorem \ref{Intro-Thm-PTP-relations} we define  the \hadgesh{Stanley-Reisner ring} of a polyhedral poset $\calp$ to be the algebra
\[\Bbbk[\calp] \defeq \calt_\calp(\eee).\]
 This generalises the definition of the Stanley-Reisner ring  of  a simplicial complex \cite[p. 62]{S1} and of
a simplicial poset \cite[Definition 3.3]{S2}, all of  which are particular cases of the Stanley-Reisner ring of a polyhedral poset, as defined above.  Another consequence that follows from Theorems \ref{Intro-Thm-chlgy_lower_sat} and \ref{Intro-Thm-PTP-relations} is that the Stanley-Reisner ring of a polyhedral poset is realisable as the cohomology ring of the polyhedral product $\calz_\calp(\C P^\infty, *)$.

For an arbitrary finite pointed poset $\calp$, we construct a new poset $s(\calp)$, that is a simplicial poset with the same vertex set as $\calp$, together with a poset map $\calp\to s(\calp)$. The poset $s(\calp)$ will be referred to as the \hadgesh{simplicial transform} of $\calp$. If $\calp$ is a polyhedral poset, then under some mild hypotheses we obtain a homotopy equivalence between a polyhedral product over $\calp$ and the polyhedral product over its simplicial transform.

\begin{Th}
	\label{Intro-projection lemma}
	Let $\calp$ be a finite reduced polyhedral poset and $(\bXbA)=\{(X_v,A_v)\;|\; v\in V_\calp\}$ be a collection of NDR-pairs such that $X_v\ne A_v$ for all $v$. Then there is a natural map
	\[\calz_\calp(\bXbA)\to\calz_{s(\calp)}(\bXbA)\]
	that  is a homotopy equivalence.
\end{Th}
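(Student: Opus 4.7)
The natural map will arise from the canonical poset map $\pi : \calp \to s(\calp)$ that ``simplicially completes'' $\calp$. By construction, $\pi$ preserves vertex sets, i.e.\ $V(\pi(x)) = V(x)$, and because $\calz^\calp_{\bXbA}(x)$ depends on $x$ only through $V(x)$, we obtain an equality of $\Top$-valued diagrams $\calz^\calp_{\bXbA} = \calz^{s(\calp)}_{\bXbA}\circ \pi$. Applying $\hocolim$ yields the natural comparison map $\calz_\calp(\bXbA) \to \calz_{s(\calp)}(\bXbA)$ of the statement.

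To prove it is a homotopy equivalence, I plan to factor the comparison through ordinary colimits. Writing $Y_v^\sigma = X_v$ if $v \in \sigma$ and $A_v$ otherwise, each of the relevant colimits is naturally a union
\[
\bigcup_\sigma \prod_{v \in V_\calp} Y_v^{\sigma}
\]
inside $\prod_v X_v$, where $\sigma$ ranges over the vertex sets $V(x)$ (respectively $V(y)$) realised in $\calp$ (respectively $s(\calp)$). Now $s(\calp)$ is simplicial, so $\{V(y) : y \in s(\calp)\}$ is closed under passage to subsets; under the reduced polyhedral hypothesis on $\calp$, this collection is precisely the down-closure of $\{V(x) : x \in \calp\}$. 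Because $\prod_v Y_v^\sigma \subseteq \prod_v Y_v^{\sigma'}$ whenever $\sigma \subseteq \sigma'$, passing to the down-closure leaves the union unchanged, and the two colimits agree as subspaces of $\prod_v X_v$.

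The final step is to upgrade each of these homotopy colimits to its colimit, up to a natural homotopy equivalence. The NDR-pair hypothesis, combined with $X_v \neq A_v$ for every $v$, ensures that every morphism in either diagram is a closed cofibration that is not an identity, and that all latching maps into a given value are also closed cofibrations. This makes both diagrams projectively cofibrant in the Str{\o}m model structure on $\Top$, so a standard hocolim--colim comparison theorem for cofibrant diagrams over a finite poset identifies each $\hocolim$ with its $\colim$ up to homotopy equivalence. Naturality in $\pi$ guarantees that these equivalences are compatible with the map constructed in the first step, giving the desired conclusion.

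\textbf{Main obstacle.} The delicate point is the combinatorial identification in the second paragraph: proving that $s(\calp)$ introduces no vertex sets outside the down-closure of those appearing in $\calp$, and that no maximal vertex set is lost under the simplicial transform. This is precisely where the ``reduced'' hypothesis intervenes, by ruling out extraneous elements of $\calp$ that $\pi$ would have to collapse or duplicate in an incompatible way. A careful unpacking of the definition of $s(\calp)$ together with the lower semilattice structure on each $\calp_{\le x}$ should establish this claim, after which the remainder of the argument is a standard application of projective cofibrancy for polyhedral-type diagrams.
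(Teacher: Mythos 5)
Your construction of the natural map is correct and matches the paper's (the paper denotes it $s\colon\calp\to s(\calp)$): since $V([x,V(x)])=V(x)$, one has $\calz^\calp_{\bXbA}=\calz^{s(\calp)}_{\bXbA}\circ s$, and applying $\hocolim$ gives the comparison. Your passage from homotopy colimits to strict colimits under the NDR and $X_v\neq A_v$ hypotheses is also sound in spirit; the paper achieves this with the Ziegler--\u{Z}ivaljevi\'c Projection Lemma for arrangements (Proposition~\ref{hocolim-colim}, resting on Lemma~\ref{Lem-arrangement}) rather than a cofibrancy argument, and the two are essentially interchangeable here.

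The genuine gap is in your second paragraph, and it is not the obstacle you identify at the end. You assert that each colimit is a union $\bigcup_\sigma\prod_v Y_v^\sigma$ inside $\prod_v X_v$, with $\sigma$ running over the vertex sets realised in $\calp$, respectively $s(\calp)$. This fails for reduced polyhedral posets in general, and already fails for simplicial posets that are not face posets of simplicial complexes. Take $\calp$ with base point $*$, vertices $a,b$, and two incomparable maximal objects $c,d$ with $V(c)=V(d)=\{a,b\}$ --- precisely the simplicial poset drawn in Section~\ref{Sec-Prelim}, which is reduced and polyhedral. Here $\calz^\calp_{\bXbA}(c)$ and $\calz^\calp_{\bXbA}(d)$ are both $X_a\times X_b$ as subspaces of $\prod_v X_v$, but in $\colim_\calp\calz^\calp_{\bXbA}$ the images $Z_c$ and $Z_d$ intersect only in $Z_a\cup Z_b=(X_a\times A_b)\cup(A_a\times X_b)$, by Lemma~\ref{Lem-homeo_into} and \eqref{Z_x}. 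The colimit is the pushout of $\calz(c)$ and $\calz(d)$ along $(X_a\times A_b)\cup(A_a\times X_b)$, which has two sheets over $(X_a\setminus A_a)\times(X_b\setminus A_b)$ and is not a subspace of $\prod_v X_v$ at all. Indexing the ``union'' by vertex sets discards exactly this multiplicity, and the reduced hypothesis cannot help, since reducedness only forbids \emph{comparable} objects sharing a vertex set. What the paper does instead is show that $\{Z_x\}_{x\in\calp}$ is an arrangement whose intersection poset is isomorphic to $\calp$ itself (Lemma~\ref{Lem-arrangement}), so distinct objects give distinct pieces of the colimit even when they share vertex sets, and then proves the colimit comparison map is a homeomorphism by an induction over the pushout squares \eqref{Eq-pushout_P} and \eqref{Eq-pushout_sP}, removing one maximal object of maximal vertex count at a time.
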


We end the paper with a discussion of a special family of  posets. We say that a finite pointed poset is \hadgesh{regular}, if for any two objects $x, y\in \obj(\calp)$, such that $|V(x)| = |V(y)|$, the sub-posets $\calp_{\le x}$ and $\calp_{\le y}$ are isomorphic.  For a  finite regular polyhedral poset $\calp$ we produce a formula that relates the $f$-vector of the simplicial transform $s(\calp)$ and the $f$-vector of $\calp$. One outcome of this is Proposition \ref{f-vector}, which gives an expression for the Poincar\'e series of $\Bbbk[\calp]$ for any finite regular polyhedral poset $\calp$ in terms of its $f$-vector.

The paper is organised as follows. Section \ref{Sec-Prelim} contains some preliminary material. We set our terminology, define polyhedral products and polyhedral tensor products, and recall some known results. In Section \ref{Sec-Acyclic} we recall some basic facts on higher limits. We then define the notion of a lower factoring section for a functor defined on a poset $\calp$, and
prove Theorem \ref{Intro-vanishing} (as Theorem \ref{vanishing}). In Section \ref{Sec-Lower_Sat} we study lower saturated posets, and prove Theorems \ref{Intro-T vanishing} and \ref{Intro-Thm-chlgy_lower_sat} (as Theorems \ref{Intro-T vanishing} and \ref{Thm-chlgy_lower_sat} respectively). We also give a simple example that shows that the lower saturation condition is actually essential to guarantee the vanishing of higher limits. Section \ref{Sec-Polyhedral} is dedicated to polyhedral posets and Stanley-Reisner rings over them. In particular we prove Theorem \ref{Intro-Thm-PTP-relations} (as Theorem \ref{Thm-PTP-relations}). Section \ref{Sec-Colim_vs_Hocolim} is aimed at proving Proposition \ref{hocolim-colim}, which claims that under some mild hypotheses defining  polyhedral products over a finite polyhedral poset as a colimit or a homotopy colimit makes no difference up to homotopy. In Section \ref{Sec-Simplicial_Transform}  we introduce the simplicial transform and prove Theorem \ref{Intro-projection lemma} (as Theorem \ref{projection lemma}). Finally Section \ref{Sec-Regular_Polyhedral_Posets} is dedicated to regular polyhedral posets, their $f$-vectors and some consequences.



\section{Polyhedral products over finite posets}
\label{Sec-Prelim}
 In this preliminary section we set our conventions and terminology, define the main object of study -  polyhedral products over finite pointed posets, and recall some known facts about polyhedral products and associated algebraic structures.


\subsection{Posets}

Throughout this paper we regard a poset $\calp$ as a small category, where the elements of $\calp$ form the object set \hadgesh{$\obj(\calp)$ }of the corresponding category, and for each order relation $x\le y$ in $\calp$, one has a unique morphism \hadgesh{$\iota_{x,y}$}. If $x\leq y$ but $x\neq y$ we may sometime use $x<y$ to express the relation between $x$ and $y$. When referring to objects, we will almost always write \hadgesh{$x\in \calp$} when we actually mean \hadgesh{$x\in\obj(\calp)$} for short.

A poset $\calp$ is said to be \hadgesh{pointed} if it contains an \hadgesh{initial object}, i.e. an object $\ini$ such that $\ini\le x$ for any  $x\in\calp$. We refer to the object $\ini$ as the \hadgesh{base point} of $\calp$. An object $x$ in a pointed poset $\calp$ is said to be \hadgesh{minimal} if $y< x$ implies  $y=\ini$.

\begin{Defi}\label{Def-Vertices}
Let $\calp$ be a finite pointed poset. If $p\in\calp$ is any object, and $v\in\calp$ is a minimal object such that  $v\le p$ in $\calp$, then we say that $v$ is a \hadgesh{vertex} of $p$. The collection of all minimal objects of $\calp$ will be referred to as the \hadgesh{vertex set} of $\calp$ and is denoted $V_\calp$. The collection of vertices of an object $p\in\calp$ is denoted $V_\calp(p)$, or simply $V(p)$ if the ambient poset $\calp$ is fixed.
\end{Defi}

Notice that the base point in a pointed poset is not a vertex.

Let $\calp$ be any poset and let $x, y\in\calp$ be any objects. The \hadgesh{meet} of $x$ and $y$, denoted $x\wedge y$, is the greatest lower bound of $x$ and $y$ in $\calp$. Similarly, the \hadgesh{join} of $x$ and $y$, denoted $x\vee y$, is the least upper bound of $x$ and $y$. If $S$ is a finite set of objects in $\calp$, then one can similarly define the \hadgesh{meet of $S$} and the \hadgesh{join of $S$}, denoted $\wedge S$ and $\vee S$, respectively. Clearly neither join nor meet need exist in general, but if they do then they are unique by definition.

On the other hand, two objects  $x, y\in\calp$  may have one or more lower bounds, and among those there is the subset of \hadgesh{maximal objects}, i.e., those objects $z\in\calp$ such that $z\le x,y$ and if $z'\le x,y$ is another lower bound, then either $z'\le z$, or $z$ and $z'$ are not comparable in $\calp$. Similarly, two objects in $\calp$ may have one or more upper bounds, and among those there is the subset of \hadgesh{minimal objects}. A meet of two or more objects is always a maximal lower bound and a join is always a minimal upper bound. However, a maximal lower bound is a meet if and only if it is unique, and similarly for a minimal upper bound. For a finite set $S$ of objects in $\calp$, let \hadgesh{$[\wedge S]$} and \hadgesh{$[\vee S]$} denote the set of all maximal lower bounds, and minimal upper bounds of $S$, respectively.

For any poset $\calp$ and any object $x\in \calp$, let $\calp_{\le x}$ and $\calp_{\ge x}$ denote the (full) sub-posets of $\calp$ with objects
\[\obj(\calp_{\le x})=\{y\in\calp\,\vert\,y\le x\},\quad\text{and}\quad \obj(\calp_{\ge x})=\{y\in\calp\,\vert\,y\ge x\}.\]


\subsection{Polyhedral product}

We are now ready to define the main object of study in this paper, i.e. polyhedral products over finite pointed posets. As motivation, we recall first the definition of polyhedral products over finite simplicial complexes \cite{BBCG}, and then proceed to introduce our generalisation.

Let $K$ be a simplicial complex with vertex set $\{1,2,\ldots,m\}$ and let $F(K)$ be its face poset. Let $(\bXbA)=\{(X_i,A_i)\}_{i=1}^m$ be a collection of pairs of spaces indexed by vertices of $K$. Define a functor
$\calz^K_{\bXbA}\colon F(K)\to\mathbf{Top}$ by
$$\calz^K_{\bXbA}(\sigma)=\prod_{i=1}^mY_i\quad\text{such that}\quad Y_i=\begin{cases}X_i&i\in\sigma\\A_i&i\not\in\sigma\end{cases}$$
on objects, and if $\sigma\le\tau\in F(K)$, then
$\calz^K_{\bXbA}(\iota_{\sigma,\tau})$ is defined to be the obvious inclusion. The  polyhedral product of $(\bXbA)$ over $K$  is defined by
$$\calz^K(\bXbA)\defeq \colim_{F(K)}\,\calz^K_{\bXbA}.$$
 If  $(X_i,A_i)=(X,A)$ for all $1\le i\le m$, then we denote the polyhedral product of $(\bXbA)$ over $K$ by  by $\calz_K(X,A)$.

The moment-angle complex and the Davis Januszkiewicz space \cite{BP, DJ} for a simplicial complex $K$ are particular examples of the polyhedral products $\calz_K(D^2,S^1)$ and $\calz_K(\C P^\infty,*)$ respectively.

Let $\calp$ be a finite pointed poset with vertex set $V_\calp$. Then  the assignment $x\mapsto V_\calp(x)$  gives rise to a functor from $\calp$ to the power set of $V_\calp$ regarded a poset by the inclusion ordering. On the other hand, the power  set of $V_\calp$ regarded as a poset is isomorphic to the face poset of a simplex on the vertex set $V_\calp$, and one can define  polyhedral products on that face poset, and by composition on any finite pointed poset. However, the colimit may behave badly for general finite pointed posets, and so replacing it by a homotopy colimit makes sense. In particular, if $(\bXbA)$ consists of NDR-pairs and $\calp=F(K)$ is the face poset of a finite simplicial complex $K$, then it follows from \cite[Projection Lemma 1.6]{ZZ} that the natural projection
\[\hocolim_{F(K)}\,\calz_{\bXbA}^{F(K)}\to\colim_{F(K)}\,\calz_{\bXbA}^{K}\] is a homotopy equivalence.
Thus the following definition is a natural generalisation of the standard definition of polyhedral products.

\begin{Defi}
\label{Def-PolyProdPoset}
  Let $\calp$ be a finite pointed poset and $(\bXbA)=\{(X_v,A_v)\;|\; v\in V_\calp\}$ be a collection of pairs of spaces indexed by vertices of $\calp$.
  \begin{enumerate}[{\rm 1)}]
    \item Define a  functor $\calz^\calp_{\bXbA}\colon \calp\to\Top$  by
    $$\calz^\calp_{\bXbA}(x)=\prod_{v\in V_\calp}Y_v,\quad\text{where}\quad Y_v=\begin{cases}X_v&v\in V(x)\\A_v&v\not\in V(x)\end{cases}$$
    on objects, and if  $x\le y\in \calp$, let
   $\calz^\calp_{\bXbA}(\iota_{x,y})$ be the obvious inclusion.

    \item Define the polyhedral product of $(\bXbA)$ over $\calp$  by
    $$\calz_\calp(\bXbA)\defeq \hocolim_{\calp}\,\calz^\calp_{\bXbA}.$$
  \end{enumerate}
\end{Defi}


\subsection{Polyhedral tensor products}
Let $\Bbbk$ be a commutative ring with a unit, and let $\Modk$ the category of $\Bbbk$-modules.
Given a finite pointed poset $\calp$ and a collection of morphisms in $\Modk$ indexed by the vertices of $\calp$, we define a construction that is an algebraic analog of the polyhedral product of spaces. Since the vertices of $\calp$ are not ordered in general, doing so requires a way of constructing a tensor product of modules that is independent of any ordering of the indexing set. In the category of commutative algebras over $\Bbbk$, tensor product is the categorical coproduct, and hence can be defined abstractly as a colimit over a discrete category, without any reference to ordering. However, such description is not available in $\Modk$. This motivates the following definition.

\begin{Defi}\label{Def-unordered_tensor_product}
Let $S$ be a finite set and let $\mathbf{M}=\{M_s\;|\; s\in S\}$ be a collection  of $\Bbbk$-modules indexed by the elements of $S$.

\begin{enumerate}[{\rm 1)}]
\item
Let $\Gamma(S, \mathbf{M})$ denote the set of all sections of the obvious projection $\pi \colon \prod_{s\in S}M_s\to S$.

\item
Let $\Bbbk\Gamma(S,\mathbf{M})$ be the free $\Bbbk$-module generated by $\Gamma(S,\mathbf{M})$. Let $U(S, \mathbf{M})\subseteq \Bbbk\Gamma(S,\mathbf{M})$ be the submodule generated by the following elements:
\begin{enumerate}[{\rm i)}]
\item $\gamma_1 +\gamma_2 - \gamma_3$, where $\gamma_i(s) = \gamma_j(s)$ for all $1\le i, j\le 3$ and all $s\in S$ except some $s_0$ for which $\gamma_3(s_0) = \gamma_1(s_0) + \gamma_2(s_0)$.

\item $a\cdot \gamma_1 - \gamma_2$ for some $a\in\Bbbk$, where $\gamma_1(s) = \gamma_2(s)$ for all $s\in S$ except some $s_0$ for which $\gamma_2(s_0) = a\gamma_1(s_0)$.
\end{enumerate}
In {\rm ii)} above, $a\cdot \gamma_1$ means multiplication of the basis element $\gamma_1\in\Bbbk\Gamma(S,\mathbf{M})$ by $a$, while $a\gamma_1$ means multiplication of the element $\gamma_1(s_0)\in M_{s_0}$ by $a$.
\medskip

Define the \hadgesh{tensor product of $\{M_s\;|\; s\in S\}$} to be the quotient module \[\bigotimes_{s\in S}M_{s}\defeq\Bbbk\Gamma(S,\mathbf{M})/ U(S,\mathbf{M}).\]
\end{enumerate}
\end{Defi}

\begin{Lem}\label{Lem-tensor_generalisation}
Let $S$ be a finite set. Then any ordering $\{s_1,s_2,\ldots, s_n\}$ on the elements of  $S$ determines a canonical isomorphism
\[\bigotimes_{s\in S} M_{s}\cong \bigotimes_{i=1}^n M_{s_i}.
\]
Furthermore, if $\underline{\alpha}=\{\alpha_s\colon M_s\to N_s\;|\; s\in S\}$ is a collection of morphisms in $\Modk$, then there is an induced morphism
\[\bigotimes_{s\in S}\alpha_s\colon \bigotimes_{s\in S}M_s\to \bigotimes_{s\in S}N_s.\]
\end{Lem}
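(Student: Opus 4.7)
The plan is to interpret $\bigotimes_{s\in S} M_s$ as the universal target for ``multilinear'' maps out of the product $\prod_{s\in S} M_s$, and then observe that the ordered iterated tensor product $M_{s_1}\otimes\cdots\otimes M_{s_n}$ has the same universal property. Concretely, I would first note that the set $\Gamma(S,\mathbf{M})$ of sections of $\prod_{s\in S}M_s\to S$ is naturally identified with the underlying set of the product $\prod_{s\in S}M_s$, and that a map $f\colon \Gamma(S,\mathbf{M})\to N$ into a $\Bbbk$-module $N$ is \emph{multilinear} precisely when fixing $\gamma(s)$ for all $s\ne s_0$ gives a $\Bbbk$-linear function of $\gamma(s_0)$, for every $s_0\in S$. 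The two families of relations generating $U(S,\mathbf{M})$ are exactly what is needed to force the tautological map $\Gamma(S,\mathbf{M})\to \bigotimes_{s\in S}M_s$ sending $\gamma$ to the class $[\gamma]$ to be multilinear, and by construction any multilinear $f$ factors uniquely through the quotient. Thus $\bigotimes_{s\in S}M_s$ represents multilinear maps out of $\prod_{s\in S}M_s$.

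Given the ordering $s_1,\ldots,s_n$, I would build the isomorphism by applying universal properties in both directions. On one hand, the assignment $\gamma\mapsto \gamma(s_1)\otimes\cdots\otimes\gamma(s_n)$ extends $\Bbbk$-linearly to a map $\Bbbk\Gamma(S,\mathbf{M})\to M_{s_1}\otimes\cdots\otimes M_{s_n}$ that manifestly kills both types of relators generating $U(S,\mathbf{M})$ (since the ordered tensor product is multilinear in each slot), yielding a well-defined $\bar\phi\colon\bigotimes_{s\in S}M_s\to M_{s_1}\otimes\cdots\otimes M_{s_n}$. On the other hand, the map $M_{s_1}\times\cdots\times M_{s_n}\to\bigotimes_{s\in S}M_s$ sending $(m_1,\ldots,m_n)$ to the class of the section $\gamma$ with $\gamma(s_i)=m_i$ is multilinear by inspection of the relations in $U(S,\mathbf{M})$, so it factors through the ordered tensor product to give a map $\bar\psi$ in the opposite direction. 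A straightforward check on generators shows that $\bar\phi$ and $\bar\psi$ are mutually inverse.

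For the second assertion, I would use the universal property directly. Post-composition of sections with $\alpha_s$ gives a map of sets $\Gamma(S,\mathbf{M})\to\Gamma(S,\mathbf{N})$, $\gamma\mapsto(s\mapsto\alpha_s(\gamma(s)))$, which extends to a $\Bbbk$-linear map $\Bbbk\Gamma(S,\mathbf{M})\to\Bbbk\Gamma(S,\mathbf{N})$. Composing with the quotient to $\bigotimes_{s\in S}N_s$ visibly sends each generator of $U(S,\mathbf{M})$ into $U(S,\mathbf{N})$, because the $\Bbbk$-linearity of each $\alpha_s$ turns a relation of type (i) or (ii) at $s_0$ into the corresponding relation at $s_0$ for the $\alpha_{s_0}$-images. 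Hence the composite descends to the required morphism $\bigotimes_{s\in S}\alpha_s$.

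The arguments are essentially bookkeeping; the only mildly subtle point is the first sentence of the proof, namely recognising that the presentation $\Bbbk\Gamma(S,\mathbf{M})/U(S,\mathbf{M})$ is precisely the coequaliser defining ``universal multilinear target'', so that the isomorphism is canonical once an ordering (and hence an identification of multilinear maps in the two frameworks) is fixed. The naturality of $\bigotimes_{s\in S}\alpha_s$ with respect to composition follows formally from the same universal property and can be relegated to a remark.
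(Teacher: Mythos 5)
Your proposal is correct and takes essentially the same approach as the paper: both observe that the two families of generators of $U(S,\mathbf{M})$ correspond exactly to the additivity and $\Bbbk$-homogeneity relations of the ordered tensor product, and both obtain the induced morphism by checking that post-composition with the $\alpha_s$ carries generators of $U(S,\mathbf{A})$ into $U(S,\mathbf{B})$. Your write-up is somewhat more explicit than the paper's (which simply asserts that the first statement ``amounts to the obvious identities in the standard tensor product''), since you spell out the universal property and construct the mutually inverse maps $\bar\phi$, $\bar\psi$, but the underlying argument is the same.
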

\begin{proof}
Write $M_i$ for $M_{s_i}$ for short. Then the first statement amounts to the obvious identities in the standard tensor prouct:
\[m_1\otimes\cdots\otimes (m'_{i_0}+m''_{i_0})\otimes\cdots \otimes m_n = m_1\otimes\cdots\otimes m'_{i_0}\otimes\cdots\otimes m_n + m_1\otimes\cdots\otimes m''_{i_0}\otimes\cdots m_n,\]
and
\[a(m_1\otimes\cdots\otimes m_{i_0}\otimes\cdots m_n) = m_1\otimes\cdots\otimes am_{i_0}\otimes\cdots m_n.\]

Let $\underline{\alpha}$ be a collection of morphisms in $\Modk$, as above. Then composition with the $\alpha_s$  induce a map $\underline{\alpha}_*\colon\Bbbk\Gamma(S, \mathbf{A})\to \Bbbk\Gamma(S,\mathbf{B})$, where $\mathbf{A} = \{A_s\;|\;s\in S\}$ and $\mathbf{B} = \{B_s\;|\;s\in S\}$.
If $\gamma_i\in \Gamma(S,\mathbf{M})$, $i=1..3$, are elements satisfying the relation i) above, then for each $s\in S$ except $s_0$, $\alpha_s(\gamma_i(s)) = \alpha_s(\gamma_j(s))$ for all $1\le i, j\le 3$, and the relation
\[\alpha_{s_0}(\gamma_3(s_0)) = \alpha_{s_0}(\gamma_1(s_0)) + \alpha_{s_0}(\gamma_2(s_0))\]
holds in $k\Gamma(S,\mathbf{B})$. Similarly one verifies the corresponding statement for the relation ii). Hence $\underline{\alpha}_*(U(S,\mathbf{A})) \subseteq U(S,\mathbf{B})$, and one obtains the induced map $\bigotimes_{s\in S}\alpha_s$, as stated.
\end{proof}

We are now ready to define polyhedral tensor products.

\begin{Defi}
\label{Def-PolyTensorProd}
  Let $\calp$ be a finite pointed poset and let $\mathbf{a}=\{a_v\colon M_v\to N_v\;|\; v\in V_\calp\}$ a collection of morphisms in $\Modk$ indexed by the vertices of $\calp$. For each object $p\in\calp$ and $v\in V(p)$, define
  \[C_{p,v}\defeq\begin{cases} M_v & v\in V(p)\\ N_v & v\notin V(p)\end{cases},\]
  and let
  $\mathbf{C}_p \defeq \{C_{p,v}\;|\; v\in V_\calp\}$.
  \begin{enumerate}[\rm 1)]
    \item Define a functor $\calt_{\calp, \mathbf{a}}\colon \calp\op\to\Modk$  by
    $$\calt_{\calp, \mathbf{a}}(p)\defeq\bigotimes_{v\in V_\calp}C_{p,v}$$
    on objects, and if $p\le q$ let $\calt_{\calp, \mathbf{a}}(\iota_{p,q})\colon \calt_{\calp,\mathbf{a}}(q)\to \calt_{\calp,\mathbf{a}}(p)$ be defined by
    $$\calt_{\calp, \mathbf{a}}(\iota_{p,q})\defeq\bigotimes_{v\in V_\calp}f_v,\quad\text{where}\quad
     f_v=\begin{cases}1_{M_v}&v\in V(p)\\\alpha_v&v\in V(q)\setminus V(p)\\1_{N_v}&v\not\in V(p).\end{cases}$$
  \end{enumerate}
Define the \hadgesh{polyhedral tensor product of $\mathbf{a}$ over $\calp$} by
\[\calt_\calp(\mathbf{a})\defeq \lim_\calp\calt_{\calp,\mathbf{a}}.\]
\end{Defi}

An important example of  polyhedral tensor products arises naturally as the cohomology of polyhedral products. Let $\calp$ be a finite pointed poset and $(\bXbA)=\{(X_v,A_v)\;|\; v\in V_\calp\}$ be a collection of pairs of spaces, indexed by $V_\calp$. Let $\Bbbk$ be a field, and let $h_*$ be a cohomology theory that takes values in $\Bbbk$-vector spaces, and which satisfies the strong form of the K\"unneth formula:
\[h^*(X\times Y) \cong h^*(X)\otimes h^*(Y).\]
For each $v\in V_\calp$, set $M_v = h^*(X_v)$, $N_v =  h^*(A_v)$ and let $a_v\colon M_v\to N_v$ be the map induced on cohomology by the inclusion $A_v\subseteq X_v$.
Let  $\calt_{\calp, \mathbf{a}}\defeq h^*\circ\calz_{\bXbA}^\calp$.
Thus
\begin{equation}
  \label{T Z}
  \calt_{\calp}(\mathbf{a})\cong  \lim_{\calp} (h^*\circ\calz_{\bXbA}^\calp).
\end{equation}
We will show that under certain circumstances $h^*(\calz_\calp(\bXbA))$ is given by the polyhedral tensor product $\calt_\calp(\mathbf{a})$ (See Theorem \ref{Thm-chlgy_lower_sat}).


\subsection{Stanley-Reisner ring}

Let $K$ be a simplicial complex with vertices $v_1,\ldots,v_m$. The \hadgesh{Stanley-Reisner ring} for $K$ is defined by
$$\Bbbk[K]=\Bbbk[v_1,\ldots,v_m]/(v_{i_1}\cdots v_{i_k}\,\vert\,\{v_{i_1},\ldots,v_{i_k}\}\not\in K).$$
Davis and Januszkiewicz \cite[Theorem 4.8]{DJ} proved that there is an isomorphism
\begin{equation}
  \label{k[K] DJ}
 H^*(\calz_K(\C P^\infty,*))\cong\Bbbk[K].
\end{equation}
Notbohm and Ray \cite[Section 3]{NR} considered  $\calz_K(\C P^\infty,*)$ as a homotopy colimit, and proved that
\begin{equation}\label{Eq-NR}
H^n(F(K);H^*\circ \calz^K_{\C P^\infty,*})\cong\begin{cases}\Bbbk[K]&n=0\\0&n\ge 1,\end{cases}.
\end{equation}
Thus, using the Bousfield-Kan spectral sequence \cite[p.\;336]{BK}, they recovered the isomorphism \eqref{k[K] DJ}.
 Hence their result can be stated as saying that the cohomology of the polyhedral product is given by the polyhedral tensor product.

Recall that $\calp$ is a \hadgesh{simplicial poset} if for each $x\in\calp$, the sub-poset $\calp_{\le x}$ is isomorphic to a boolean algebra, i.e. the face poset of a simplex. Obviously, the face poset of a simplicial complex is a simplicial poset, but the converse is false. For example, if $\calp$ is a poset with the  Hasse diagram:
\begin{center}
  \begin{tikzpicture}[thick]
    \node            	(bp) at (0,0)  { $*$ };
    \node            	(a) at (-1,1)  { $\bullet$ };
    \node            	(b) at (1,1)  { $\bullet$ };
    \node            	(c) at (-1,2)  { $\bullet$ };
    \node            	(d) at (1,2)  { $\bullet$ };
      \draw [->] (bp) edge (a);
    \draw [->] (bp) edge (b);
    \draw [->] (a) edge (c);
    \draw [->] (a) edge (d);
     \draw [->] (b) edge (c);
    \draw [->] (b) edge (d);
  \end{tikzpicture}
\end{center}
 then it is a simplicial poset but  not the face poset of any simplicial complex.

For any set $X$, let $P_\Bbbk[X]$ denote the polynomial algebra over $\Bbbk$ generated by a set $X$. Stanley \cite{S2} generalised the definition of the Stanley-Reisner ring of a simplicial complex to simplicial posets.

\begin{Defi}[{\cite[Definition 3.3]{S2}}]
\label{Def-SR-ring-Simplicial-Poset}
Let $\calp$ be a finite simplicial poset. The Stanley-Reisner ring for $\calp$ over $\Bbbk$ is defined by
$$\Bbbk[\calp]=P_\Bbbk[\obj(\calp)]/I_\calp,$$
where the ideal $I_\calp$ is generated by the following elements:
\begin{enumerate}[\rm (i)]
  \item $\ini-1$; \label{Def-SR-ring-Simplicial-Poset-i}
  \item $xy$, if $[x\vee y]\eempty$;\label{Def-SR-ring-Simplicial-Poset-ii}
  \item $xy-(x\wedge y)\sum_{z\in[x\vee y]} z$, if  $[x\vee y]\nempty$. \label{Def-SR-ring-Simplicial-Poset-iii}
\end{enumerate}
\end{Defi}

Part iii) of the definition of $I_\calp$ uses the observation that if $\calp$ is  simplicial poset and  $x,y\in P$ have a common upper bound, then they have a meet  $x\wedge y$. The proof is elementary

If $\calp=F(K)$ is the face poset of a simplicial complex $K$, then it follows easily that the map $V_\calp\to\Bbbk[\calp]$ extends to an isomorphism $\Bbbk[K]\to\Bbbk[\calp]$, and so Stanley-Reisner rings of simplicial posets are a generalisation of the corresponding construction for simplicial complexes. We will see below (Corollary \ref{Cor-generalization-of-simp}) that if $\calp$ is a simplicial poset, then $H^*(\calz_\calp(\C P^\infty,*))$ is isomorphic to the Stanley-Reisner ring for $\calp$, which in turn is the polyhedral tensor product $\calt_\calp(\mathbf{e})$, where $\mathbf{e} = \{e_v\;|\;v\in V_\calp\}$, and $e_v\colon H^*(\C P^\infty, \Bbbk)\to \Bbbk$ is the augmentation for all $v\in V_\calp$. This was proven by L\"u and Panov in \cite[Lemma 2.5 and Remark 3.2]{LP}.


\section{Acyclic functors on finite posets}
\label{Sec-Acyclic}
In this section we study functors defined on arbitrary pointed posets from the point of view of their higher limits, or equivalently functor cohomology. The aim of the section is to characterise a family of functors that are acyclic, i.e., for which all higher limits vanish.


\subsection{Functor cohomology and the derived functors of the inverse limit}

We start by recalling  the definition of higher limits and some basic properties. Let $\calp$ be a small category, let $\Ab$ be the category of abelian groups, and let $F\colon \calp\op\to\Ab$ be a  functor. Recall that the \hadgesh{cohomology of $\calp$ with coefficients in $F$} can be computed as the cohomology of the cochain complex defined as follows:
\[C^n(\calp, F)=\prod_{x_0\to\cdots\to x_n}F(x_0),\]
with a differential  $\delta\colon C^{n}(\calp, F)\to C^{n+1}(\calp, F)$ given by
\[
\begin{split}
(\delta c)(x_0\to\cdots\to x_{n+1})=
&F(x_0\to x_1)(c(x_1\to\cdots\to x_{n+1})) +\\
&\sum_{k=1}^i(-1)^kc(x_0\to\cdots\to\widehat{x_k}\to\cdots\to x_{n+1})
\end{split}\]
for $c\in C^n(\calp, F)$. Then
\[H^n(\calp, F) = H^n(C^*(\calp, F), \delta).\]
By definition
\[H^0(\calp, F) \cong \lim_\calp F,\quad\text{and for $n>0$}\quad H^n(\calp, F) \cong \higherlim{\calp}{n} F,\]
the derived functors of the inverse limit, frequently referred to as \hadgesh{the higher limits of $F$}. In particular, if $F$ is a constant functor on $\calp$ with value $A\in\Ab$, then $H^*(\calp, F) \cong H^*(B\calp, A)$, where $B\calp$ denotes the geometric realisation of the nerve of $\calp$.

We proceed with some observations on higher limits that are relevant to our context.
Recall that a sub-poset $\calq\subseteq\calp$ is said to be an \hadgesh{upper set} in $\calp$, if $y\in\calq$ implies $x\in\calq$ for all $x\geq y$. For $x\in \calp$,  the sub-poset  $\calp_{\ge x}$ is an upper set, and every upper set in $\calp$ is the union of $\calp_{\ge x}$. More generally one has the following.

\begin{Defi}\label{Def-Closed_Upwards}
Let $\calp$ be a category and $\calq\subseteq\calp$ a subcategory. We say that $\calq$ is \hadgesh{closed upwards} if for each $y\in \calq$ and $x\in\calp$, the $\Mor_\calp(y,x)\neq \emptyset$ implies $x\in\calq$. If $S\subseteq\obj(\calp)$ is a collection of objects, then the \hadgesh{upwards closure of $S$ in $\calp$} is the full subcategory of $\calp$ whose objects are all those $x\in\calp$ such that $\calp(y,x)\neq\emptyset$ for some $y\in S$.
\end{Defi}

Subcategories of a small category that are closed upwards have the following pleasant property.

\begin{Lem}
 \label{restriction higher limit}
Let $\calp$ be a small category and let  $\calq\subseteq\calp$ be a subcategory that is closed upwards. Let $\cala$ be an abelian category and let $F\colon \calp^{\op}\to \cala$ be a functor.
Assume that $F(p) = 0$ for all $p\in \calp\setminus \calq$. Then for all $n\geq 0$,
\[\higherlim{\calp}{n} F\cong \higherlim{\calq}{n} F|_\calq.\]
\end{Lem}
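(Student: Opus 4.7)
The plan is to work directly at the cochain level, using the complex
\[
C^n(\calp, F) = \prod_{x_0 \to x_1 \to \cdots \to x_n} F(x_0)
\]
recalled at the beginning of the section. This formula together with its coboundary $\delta$ computes $\higherlim{\calp}{n} F$ and makes sense in any abelian category admitting products, so no passage to injective resolutions in functor categories is required.

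First I would observe that every factor in $C^n(\calp, F)$ indexed by a chain whose initial vertex $x_0$ lies in $\calp \setminus \calq$ vanishes by the hypothesis on $F$; hence only chains with $x_0 \in \calq$ contribute non-trivially. The key step then uses the closed-upwards hypothesis: if $x_0 \in \calq$ and $x_0 \to x_1 \to \cdots \to x_n$ is a chain in $\calp$, then the fact that $\Mor_\calp(x_0, x_1) \neq \emptyset$ forces $x_1 \in \calq$ by Definition \ref{Def-Closed_Upwards}, and iterating gives $x_i \in \calq$ for every $i$. Conversely, every chain in $\calq$ is a chain in $\calp$, so the surviving factors of $C^n(\calp, F)$ are in natural bijection with the $n$-simplices of the nerve of $\calq$.

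This produces an identification $C^n(\calp, F) \cong C^n(\calq, F|_\calq)$ for each $n \geq 0$, and I would verify that it commutes with the coboundaries. This is immediate: $\delta$ is built from the morphism $F(x_0 \to x_1)$ and from face maps that either delete the initial vertex or an intermediate one, none of which takes a chain that already lies in $\calq$ out of $\calq$, and the values of $F$ appearing all coincide with those of $F|_\calq$. Passing to cohomology then yields the stated isomorphism. The only point that is really used, and the potential obstacle if the hypothesis were weakened, is the closed-upwards condition: if $\calq$ were merely closed downwards, a chain could start in $\calq$ and then exit it, producing surviving factors not indexed by chains of $\calq$ and breaking the identification. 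Everything else is routine bookkeeping.
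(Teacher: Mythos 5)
Your proof is correct and follows essentially the same route as the paper: both split the cochain complex $C^n(\calp,F)$ according to whether the initial vertex $x_0$ of a chain lies in $\calq$, kill the first factor using $F(p)=0$, and identify the rest with $C^n(\calq,F|_\calq)$ via the upwards-closed hypothesis. You spell out the upwards-closure argument (that a chain starting in $\calq$ stays in $\calq$) more explicitly than the paper does, which is a helpful clarification but not a different method.
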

\begin{proof}
By Definition
$$C^n(\calp;F)\defeq\prod_{x_0\to\cdots\to x_n}F(x_0).$$
Since $\calq$ is closed upwards, $C^n(\calp;F)$ can be split as a product
\[C^n(\calp;F)=\prod_{\stackrel{x_0\to\cdots\to x_n}{x_0\in\calp\setminus\calq}}F(x_0)\times \prod_{\stackrel{x_0\to\cdots\to x_n}{x_0\in\calq}}F(x_0) = \prod_{\stackrel{x_0\to\cdots\to x_n}{x_0\in\calp\setminus\calq}}F(x_0) \times C^n(\calq, F|_\calq).\]
But the first factor vanishes by assumption, and hence the obvious inclusion
\[C^*(\calq, F|_\calq) \xto{\inc} C^*(\calp, F)\]
is an isomorphism of cochain complexes. The claim follows.
\end{proof}

\begin{Prop}
  \label{A_x}
Fix an abelian group $A$, a poset $\calp$ and an object $x\in \calp$. Define a functor $A_x\colon \calp^\mathrm{op}\to\Ab$ by
$$A_x(y)=\begin{cases}A&y\in \calp_{\ge x}\\0&y\not\in \calp_{\ge x}\end{cases}\quad\text{and}\quad A_x(y\le z)=\begin{cases}1_A&y\in \calp_{\ge x}\\0&y\not\in \calp_{\ge x}.\end{cases}$$
Then  $\higherlim{\calp}{i}A_x=0$ for all $i>0$.
\end{Prop}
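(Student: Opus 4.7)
The plan is to reduce the computation to the sub-poset $\calp_{\ge x}$, where the functor $A_x$ becomes constant, and then to exploit the fact that $\calp_{\ge x}$ has an initial object.

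First, I would observe that $\calp_{\ge x}$ is closed upwards in $\calp$: if $y \ge x$ and $z \ge y$, then $z \ge x$, so $z \in \calp_{\ge x}$. Since $A_x$ vanishes by definition on every object not in $\calp_{\ge x}$, Lemma \ref{restriction higher limit} applies and gives
\[\higherlim{\calp}{i} A_x \cong \higherlim{\calp_{\ge x}}{i} \bigl(A_x\big|_{\calp_{\ge x}}\bigr)\]
for every $i \ge 0$.

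Next, I would note that the restriction $A_x|_{\calp_{\ge x}}$ is precisely the constant functor with value $A$, since both the object assignment and the structure maps are defined to be $A$ and $1_A$, respectively, on $\calp_{\ge x}$. By the general fact recalled just before Definition \ref{Def-Closed_Upwards}, for a constant functor with value $A$ on a small category one has
\[\higherlim{\calp_{\ge x}}{i}\bigl(A_x\big|_{\calp_{\ge x}}\bigr) \cong H^i(B\calp_{\ge x}, A).\]

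Finally, I would conclude by noting that $x$ is an initial object of $\calp_{\ge x}$ (the unique morphism from $x$ to any $y \in \calp_{\ge x}$ is $\iota_{x,y}$). Hence $B\calp_{\ge x}$ is contractible, so $H^i(B\calp_{\ge x}, A) = 0$ for every $i > 0$, and the proposition follows. There is no real obstacle here; the only thing to check carefully is that the restriction lemma is applicable and that one correctly identifies $x$ as an initial (not terminal) object of $\calp_{\ge x}$, which is what yields contractibility of the nerve.
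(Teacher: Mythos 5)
Your proof is correct and follows essentially the same route as the paper: restrict to the upper set $\calp_{\ge x}$ via Lemma \ref{restriction higher limit}, identify the restriction as the constant functor $A$, and use that $x$ is initial in $\calp_{\ge x}$ so $B\calp_{\ge x}$ is contractible.
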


\begin{proof}
Since $\calp_{\geq x}$ is an upper set in $\calp$, away from which $A_x$ vanishes,
\[\higherlim{\calp}{i}A_x\cong\higherlim{\calp_{\geq x}}{i}A_x\vert_{\calp_{\ge x}}\]
for all $i\geq 0$ by Lemma \ref{restriction higher limit}. Since $A_x\vert_{P_{\ge x}}$ is the constant functor with value $A$,
\[\higherlim{\calp_{\ge x}}{i}A_x\vert_{\calp_{\ge x}}\cong H^i(B\calp_{\ge x}, A).\] But since $\calp_{\ge x}$ has an initial object $x$,  $B\calp_{\ge x}$ is contractible, and so $H^i(B\calp_{\ge x};A)=0$ for $i>0$.
\end{proof}

Let $\calp$ be any small category, and let $F, G\colon \calp\op\to\Ab$ be functors. We say that $G$ is a subfunctor of $F$, and denote this relation by $G\le F$, if for each object $x\in\calp\textcolor{red}{}$, $G(x)\le F(x)$ and the inclusion forms a natural transformation $G\to F$. If $G\le F$, then the quotient functor $F/G\colon \calp\op\to\Ab$ is defined by $(F/G)(x)=F(x)/G(x)$ with the obvious effect on morphisms. There is a canonical projection transformation $F\to F/G$ and an exact sequence of functors $0\to G\to F\to(F/G)\to 0$. The following lemma is elementary.

\begin{Lem}
  \label{subfunctor_exact_sequence}
 Let $\calp$ be a small category, let $F\colon\calp\op\to\Ab$ be a functor, and let $G\le F$ be a subfunctor. Then there is a long exact sequence
 \[\cdots\to\higherlim{\calp}{i}G\to\higherlim{\calp}{i}F\to\higherlim{\calp}{i}F/G\to\higherlim{\calp}{i+1}G\to\cdots.\]
\end{Lem}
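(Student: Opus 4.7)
The plan is to derive the long exact sequence from a short exact sequence of cochain complexes, using the explicit bar-type model $C^*(\calp, -)$ for $H^*(\calp, -)$ recalled at the start of the section.

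First I would observe that the short exact sequence of functors $0\to G\to F\to F/G\to 0$ is, by definition of $G\le F$ and of the quotient functor, an exact sequence in the functor category $\Funct(\calp\op,\Ab)$ — that is, for every object $x\in\calp$ the sequence
\[0\to G(x)\to F(x)\to (F/G)(x)\to 0\]
is exact in $\Ab$. Then I would take, for each fixed $n\ge 0$, the product of these short exact sequences over all chains $x_0\to\cdots\to x_n$ in $\calp$. Since direct products are exact in $\Ab$, this produces a short exact sequence of abelian groups
\[0\to C^n(\calp,G)\to C^n(\calp,F)\to C^n(\calp,F/G)\to 0.\]

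Second, I would verify that the inclusion $G\hookrightarrow F$ and the projection $F\twoheadrightarrow F/G$ induce cochain maps, i.e.\ commute with the differential $\delta$ defined in the excerpt. This is purely formal: $\delta$ is built out of the structure maps $F(x_0\to x_1)$ and restriction to sub-chains, both of which commute with the pointwise inclusion and projection by the fact that $G\to F\to F/G$ is a natural transformation of functors. Assembling the displays above in $n$ therefore gives a short exact sequence of cochain complexes
\[0\to C^*(\calp,G)\to C^*(\calp,F)\to C^*(\calp,F/G)\to 0.\]

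Finally I would apply the standard zig-zag lemma to this short exact sequence of complexes to obtain a long exact sequence in cohomology, and then translate via the identifications $H^0(\calp,-)\cong\lim_\calp(-)$ and $H^i(\calp,-)\cong\higherlim{\calp}{i}(-)$ recalled at the beginning of the section. There is no genuine obstacle: the only point worth double-checking is the exactness of arbitrary direct products, which holds in $\Ab$, so the argument goes through uniformly. (If one preferred an abstract route, the same conclusion follows from the fact that $\Funct(\calp\op,\Ab)$ is an abelian category and $\higherlim{\calp}{*}$ is the right-derived sequence of the left-exact functor $\lim_\calp$, but the cochain-level argument is more direct given the setup already in place.)
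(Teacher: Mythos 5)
Your proof is correct, and it is the natural argument one would supply here. The paper itself gives no proof of this lemma (it is declared \emph{elementary} just before the statement), so there is no alternative approach to compare against; the cochain-level argument you give, via the short exact sequence of complexes $0\to C^*(\calp,G)\to C^*(\calp,F)\to C^*(\calp,F/G)\to 0$ and the zig-zag lemma, is exactly the standard proof, and your remark that the key point is exactness of direct products in $\Ab$ (the AB4${}^*$ condition) is the right thing to flag.
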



\subsection{Lower factoring sections}
\label{Sec Functors with a section}

Let $\calp$ be a poset and let $F\colon\calp\op\to\Ab$ be a functor. We say that $F$ admits a \hadgesh{section} if there is a functor $S\colon \calp\to\Ab$ satisfying the following conditions:
\begin{enumerate}[{\rm(a)}]
  \item $S(x)=F(x)$ for all $x\in \calp$;
  \item For all $x\le y\in P$, $S(\iota_{x,y})$ is a right inverse for $F(\iota_{x,y})$, i.e., $F(\iota_{x,y})\circ S(\iota_{x,y})=1_{F(x)}$.
\end{enumerate}

We are interested in the higher limits of certain functors, and in particular in conditions that will guarantee that the higher limits vanish. In general the higher limits of functors with a section are not trivial. Indeed, the simplest example of a functor with a section is any constant functor. But the higher limits of such a functor on a poset $\calp$ are the cohomology groups of $B\calp$ with coefficients in the value of the functor, and these do not vanish in general. Thus we restrict attention to functors on a  that satisfy a stronger requirement.

\begin{Defi}
\label{lower factoring}
Let $\calp$ be a pointed poset, and let $F\colon\calp\op\to\Ab$ be a functor. We say that $F$ has a \hadgesh{lower factoring section} $S$, if $S$ is a section for $F$ and, in addition, if for any pair of objects $x,y\in P$ that have an upper bound  $z\ge x, y$,  there is a lower bound $w\le x, y$ such that the diagrams
\begin{equation}
  \label{lower factoring diagram}
  \xymatrix{
  & F(z)\ar[dr]^{F(\iota_{y,z})}\ar@{<-}[dl]_{S(\iota_{x,z})}&&&	& F(z)\ar@{<-}[dr]^{S(\iota_{y,z})}\ar[dl]_{F(\iota_{x,z})}	\\
  F(x)\ar[dr]_{F(\iota_{w,x})} && F(y)\ar@{<-}[dl]^{S(\iota_{w,y})}&&	F(x)\ar@{<-}[dr]_{S(\iota_{w,x})} && F(y)\ar[dl]^{F(\iota_{w,y})}	\\
  &F(w)&&&											&F(w)}
\end{equation}
commute.
\end{Defi}

In Definition \ref{lower factoring} the factorisation condition must hold for any pair $x, y$. Hence it suffices to require that one of the squares in Diagram (\ref{lower factoring diagram}) commutes, as commutativity of the other would follow by symmetry.

\begin{Lem}
  \label{lower factoring minimal and quotient}
  Let $\calp$ be a pointed poset, and let $F\colon\calp\op\to\Ab$ be a functor with a lower factoring section $S$. Then the following statements hold:
  \begin{enumerate}[{\rm (a)}]
  \item Let $u\in\calp$ be a minimal object, let $\calp_0\subseteq \calp$ be the full sub-poset on all objects but $u$, and assume $F(u) = F(*) =0$. Then $S\vert_{\calp_0}$ is a lower factoring section of $F\vert_{\calp_0}$.
  \label{lower factoring minimal and quotient a}
  \item If $F_0\le F$ is a subfunctor such that $S$ restricts to a lower factoring section $S_0\le S$ for $F_0$, then $S/S_0$ is a lower factoring section for $F/F_0$.
  \label{lower factoring minimal and quotient b}
  \end{enumerate}
\end{Lem}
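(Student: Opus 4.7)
The plan for part (a) is to show that restriction preserves both the section property and the lower factoring property. The section identities $F(\iota_{x,y})\circ S(\iota_{x,y}) = 1_{F(x)}$ restrict trivially to $\calp_0 \subseteq \calp$. For the lower factoring condition, given $x, y \in \calp_0$ with a common upper bound $z \in \calp_0$, I first invoke the lower factoring property of $S$ on all of $\calp$ to obtain a witness $w \le x, y$ in $\calp$. If $w \in \calp_0$, this same $w$ serves as a witness for $F\vert_{\calp_0}$.

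The only way this fails is $w = u$, and in this case I replace $w$ by the base point $\ini \in \calp_0$ (which is distinct from $u$, since by definition minimal objects are not the base point). Then $\ini \le x, y$ holds automatically, and the hypothesis $F(\ini) = 0$ forces both sides of each square in (\ref{lower factoring diagram}) to be the zero map, so commutativity is automatic. This fallback is the only nontrivial step in part (a), and is where the hypothesis $F(\ini) = 0$ is essential; the companion hypothesis $F(u) = 0$ is what guarantees that no information is lost when deleting $u$ from $\calp$ (in particular that $S$ does not encode any nonzero data at $u$ that would obstruct the construction).

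For part (b), the plan is entirely formal. The fact that $F_0 \le F$ and $S_0 \le S$ are subfunctors ensures that $F(\iota_{x,y})$ and $S(\iota_{x,y})$ carry $F_0(x) = S_0(x)$ into $F_0(y) = S_0(y)$, so they descend to well-defined maps on the quotients $F/F_0$ and $S/S_0$. The section identity descends to give $(F/F_0)(\iota_{x,y}) \circ (S/S_0)(\iota_{x,y}) = 1_{(F/F_0)(x)}$. Finally, for the lower factoring of $S/S_0$, any $w \le x, y$ witnessing the lower factoring of $S$ for $F$ with given $x, y, z$ also witnesses it for $S/S_0$ over $F/F_0$: the relevant squares in (\ref{lower factoring diagram}) for the quotient functors are simply the images of the corresponding squares for the original functors under the natural projections, and commutativity is preserved under quotients. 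The main obstacle throughout is the handling of the case $w = u$ in part (a); once one observes that $\ini$ is an available replacement and that $F(\ini) = 0$ trivialises the diagrams, both parts reduce to routine diagram chasing.
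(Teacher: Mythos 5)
Your overall approach matches the paper's exactly for both parts, and part (b) is fine as you state it. In part (a), however, the key sentence is not quite right: you write that ``the hypothesis $F(\ini)=0$ forces both sides of each square in \eqref{lower factoring diagram} to be the zero map,'' but the hypothesis $F(\ini)=0$ only kills the composition that actually passes through $F(\ini)$, namely $S(\iota_{\ini,y})\circ F(\iota_{\ini,x})$. The \emph{other} composition, $F(\iota_{y,z})\circ S(\iota_{x,z})\colon F(x)\to F(y)$, does not go near $F(\ini)$, and there is no a priori reason it should vanish. The correct justification is: the original commuting diagram with $w=u$ gives $F(\iota_{y,z})\circ S(\iota_{x,z}) = S(\iota_{u,y})\circ F(\iota_{u,x})$, and this is zero because it factors through $F(u)=0$. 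Only once this is established can you swap $u$ for $\ini$ and observe both sides vanish. Your remark that $F(u)=0$ ``guarantees that no information is lost when deleting $u$'' gestures at this role but does not supply the actual computation, so the step as written has a genuine gap; the fix is one line, exactly the display the paper gives.
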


\begin{proof}
  Clearly, $S\vert_{\calp_0}$ is a section of $F\vert_{\calp_0}$, so to prove part (\ref{lower factoring minimal and quotient a}) it remains to check that $S\vert_{\calp_0}$ is lower factoring. By assumption, given $x,y\in \calp_0$ and an upper bound $z\ge x, y$, there is a lower bound $w\in \calp$ for $x$ and $y$, such that the  diagrams \eqref{lower factoring} above commute. If $w\ne u$, then these diagrams are contained in $\calp_0$. On the other hand, if $w=u$, then $F(u)=0$, and so
  \[F(\iota_{y,z})\circ S(\iota_{x,z}) = 0 = F(\iota_{x,z})\circ S(\iota_{y,z}).\]
  Hence we may set $w=*$, which is an object in $\calp_0$ by definition, and clearly satisfies the requirement.

 Since $S_0$ is a subfunctor of $S$, we have a functor $S/S_0$ which is obviously a section of $F/F_0$. Commutativity of Diagrams \eqref{lower factoring}  for $F/F_0$ and $S/S_0$ follows at once, and so $S/S_0$ is a lower factoring section of $F/F_0$. This proves Part \eqref{lower factoring minimal and quotient b}.
\end{proof}


\subsection{Acyclicity of functors with lower factoring sections}

The following lemma is the key to prove the main theorem of this section.

\begin{Lem}
  \label{locally constant}
  Let $\calp$ be a pointed poset and let $F\colon\calp\op\to\Ab$ be a functor that admits a lower factoring section $S$ and such that $F(*)=0$. Let $u\in\calp$ be a minimal object. Then there is a subfunctor $F_u\le F$ that is locally constant on $\calp_{\ge u}$, with  $F_u(x)\cong F(u)$ for all $x\in\calp_{\ge u}$, and $F_u(y)=0$ for all $y\notin \calp_{\geq u}$. Furthermore, the section $S$ restricts to a lower factoring section $S_u$ of $F_u$.
\end{Lem}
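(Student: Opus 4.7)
The natural candidate is the image subfunctor of the section along $u$: for $x\in\calp_{\ge u}$ set
$$F_u(x):=S(\iota_{u,x})\bigl(F(u)\bigr)\subseteq F(x),\qquad\text{and } F_u(y):=0\text{ for }y\notin\calp_{\ge u}.$$
The section identity $F(\iota_{u,x})\circ S(\iota_{u,x})=1_{F(u)}$ shows that each $S(\iota_{u,x})$ is injective, so $F_u(x)\cong F(u)$ for every $x\in\calp_{\ge u}$.

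The real content of the lemma is the assertion that $F_u$ is closed under the morphisms of $F$, i.e., that $F(\iota_{y,z})$ maps $F_u(z)$ into $F_u(y)$ whenever $y\le z$. The nontrivial case is $z\in\calp_{\ge u}$. I would apply the lower factoring condition to the pair $(u,y)$ with common upper bound $z$, obtaining a lower bound $w\le u,y$ making Diagram (\ref{lower factoring diagram}) commute. Because $u$ is minimal, $w\in\{*,u\}$, and $w=u$ is possible only when $u\le y$. If $y\notin\calp_{\ge u}$ the only option is $w=*$, and the commuting square reads
$$F(\iota_{y,z})\circ S(\iota_{u,z})=S(\iota_{*,y})\circ F(\iota_{*,u})=0$$
since $F(*)=0$, whence $F(\iota_{y,z})(F_u(z))=0=F_u(y)$. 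If $u\le y\le z$, then either $w=u$, in which case the diagram reads $F(\iota_{y,z})\circ S(\iota_{u,z})=S(\iota_{u,y})$, or $w=*$, in which case composing the resulting identity with $F(\iota_{u,y})$ collapses it to $1_{F(u)}=F(\iota_{u,z})\circ S(\iota_{u,z})=0$, forcing $F(u)=0$ and trivialising the lemma. In the first alternative the identity $F(\iota_{y,z})\circ S(\iota_{u,z})=S(\iota_{u,y})$ says that $F(\iota_{y,z})$ restricts to a map $F_u(z)\to F_u(y)$ which, under the identifications $F_u(z)\cong F(u)\cong F_u(y)$ given by the sections, corresponds to $1_{F(u)}$, yielding both the subfunctor property and local constancy.

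For the induced section, functoriality of $S$ gives $S(\iota_{x,y})\circ S(\iota_{u,x})=S(\iota_{u,y})$ whenever $u\le x\le y$, so $S(\iota_{x,y})$ carries $F_u(x)$ isomorphically onto $F_u(y)$; this defines $S_u(\iota_{x,y})$, and $S_u$ is set to be zero off of $\calp_{\ge u}$. The section identity for $S_u$ is inherited from that for $S$ by restriction, and the lower factoring squares for $S_u$ are the corresponding squares for $S$ viewed on the sub-objects, realised with the same witnesses $w$. The chief obstacle throughout is the case analysis of the second paragraph: the lower factoring condition only guarantees the existence of \emph{some} $w$, and one must exploit minimality of $u$ together with $F(*)=0$ to pin $w$ down to $\{*,u\}$ and extract the key identity $F(\iota_{y,z})\circ S(\iota_{u,z})=S(\iota_{u,y})$ that underlies local constancy.
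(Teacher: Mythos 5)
Your construction of $F_u$ and the main argument agree closely with the paper's: you use minimality of $u$ to pin the factoring witness down to $\{*,u\}$, treat the $w=*$ case via $F(*)=0$, and read off local constancy from the identity $F(\iota_{y,z})\circ S(\iota_{u,z})=S(\iota_{u,y})$. One small stylistic difference: for $u\le y\le z$ the paper derives that identity directly from functoriality of $S$, namely $F(\iota_{y,z})\circ S(\iota_{u,z})=F(\iota_{y,z})\circ S(\iota_{y,z})\circ S(\iota_{u,y})=S(\iota_{u,y})$, which bypasses your detour through lower factoring and the degenerate sub-case forcing $F(u)=0$. Both routes are correct, but the paper's is slightly shorter.

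There is, however, a real slip in your final paragraph, where you assert that the lower factoring squares for $S_u$ are obtained ``with the same witnesses $w$'' as for $S$. This is not true as stated. For $x,y\in\calp_{\ge u}$ with common upper bound $z$, the witness $w$ supplied by the lower factoring condition for $S$ need not satisfy $w\ge u$ (e.g.\ $x,y$ may lie over several minimal objects). If $w\not\ge u$, then $F_u(w)=0$, so the composite $S_u(\iota_{w,y})\circ F_u(\iota_{w,x})$ is zero, while the left side $F_u(\iota_{y,z})\circ S_u(\iota_{x,z})$ is an isomorphism of $F_u(x)\cong F(u)$ --- contradictory unless $F(u)=0$. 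The correct choice, as in the paper, is $w=u$ whenever $x,y\in\calp_{\ge u}$ (this commutativity follows directly from the identity $F(\iota_{y,z})\circ S(\iota_{u,z})=S(\iota_{u,y})$ together with the functoriality of $S$ you already used), and $w=*$ whenever one of $F_u(x)$, $F_u(y)$ vanishes. With this correction the proposal is complete.
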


\begin{proof}
  Define $F_u$ by
  $$F_u(x)=\begin{cases}S(\iota_{u,x})(F(u))&x\in \calp_{\ge u}\\0&x\not\in \calp_{\ge u}\end{cases}\quad\text{and}\quad F_u(\iota_{x,y})=\begin{cases}F(\iota_{x,y})\vert_{F_u(y)}&x\in \calp_{\ge u}\\0&x\not\in \calp_{\ge u}.\end{cases}$$
  This is well-defined since
  \begin{align*}
  F_u(\iota_{x,y})(F_u(y)) &= F(\iota_{x,y})\circ S(\iota_{u,y})(F(u))\\
  &= F(\iota_{x,y})\circ S(\iota_{x,y})\circ S(\iota_{u,x})(F(u))\\
  &= S(\iota_{u,x})(F(u))\\
	&= F_u(x).
\end{align*}
  For each object $x\in\calp$, let $\inc_x\colon F_u(x)\to F(x)$  denote the obvious inclusion. We claim that $\inc\colon F_u\to F$ is a  natural transformation. Thus, we must show that for any $x\le y\in \calp$,  the  diagram
  \begin{equation}
    \label{F_u}
    \xymatrix{F_u(y)\ar[rr]^{F_u(\iota_{x,y})}\ar[d]_{\inc_y}&&F_u(x)\ar[d]^{\inc_x}\\
    F(y)\ar[rr]^{F(\iota_{x,y})}&&F(x)}
  \end{equation}
commutes. There are  three cases to consider
\begin{enumerate}[(a)]
\item $x\in P_{\ge u}$,
\item $x\not\in P_{\ge u},y\in P_{\ge u}$, and
\item $x,y\not\in P_{\ge u}$.
\end{enumerate}
In the case (a), $y\in\calp_{\ge u}$, and  one has
\[F(\iota_{x,y})\circ S(\iota_{u,y}) =  F(\iota_{x,y})\circ S(\iota_{x,y})\circ S(\iota_{u,x}) = S(\iota_{u,x}).\]
Commutativity of Diagram \eqref{F_u}  follows from the definition of $F_u$ in this case. In case (c),  $F_u(x) = F_u(y) = 0$, and the diagram commutes trivially. It remains to prove commutativity in case (b). In this case,  $F_u(x)=0$ so it suffices to show that $F_u(\iota_{x,y})=0$. Since $S$ is a lower factoring section, there is some $v\in \calp$ such that $v\le x, u$, and $F(\iota_{x,y})\circ S(\iota_{u,y})=S(\iota_{v,x})\circ F(\iota_{v,u})$. Thus
  \[F_u(\iota_{x,y})(F_u(y))=F(\iota_{x,y})\circ S(\iota_{u,y})(F(u))=S(\iota_{v,x})\circ F(\iota_{v,u})(F(u)).\]
  Since $x\not\in P_{\ge u}$,  it follows that $v\lneq u$, and hence that $v=*$ by the minimality of $u$. Since $F(*)=0$ by assumption, it follows that  $F(\iota_{v,u})=0$, and hence that $F_u(\iota_{x,y})(F_u(y))=0$, as desired.

 By definition $F_u(x) \cong F(u)$ for all $x\in \calp_{\ge u}$. Furthermore,  for any $x\le y$ in $\calp_{\ge u}$ one has
 \[F_u(\iota_{x,y}) \defeq F(\iota_{x,y})|_{F_u(y)} = F(\iota_{x,y})\circ S(\iota_{u,y})|_{F(u)} = F(\iota_{x,y})\circ S(\iota_{x,y})\circ S(\iota_{u,x})|_{F(u)} =S(\iota_{u,x})|_{F(u)}.\]
 This shows that $F_u(\iota_{x,y})$ is a monomorphism, and hence an isomorphism for all $\iota_{x,y}$ in $\calp_{\ge u}$. This shows that $F_u$ is locally constant on $\calp_{\ge u}$.

Finally, we must show that $S$ restricts to a lower factoring section $S_u$ for $F_u$. Let $S_u$ denote the restriction of $S$ to the values of the subfunctor $F_u$. This clearly define a section for $F_u$, and it remains to show that it is lower factoring. For $x,y\in\calp_{\ge u}$, one can take $w=u$ and verify that the diagrams \eqref{lower factoring} commute. In all other cases, either $F_u(x)$ or $F_u(y)$ vanish and one may take $w=*$ to satisfy commutativity in \eqref{lower factoring}. This shows that $S_u$ is a lower factoring section, and hence completes the proof.
\end{proof}

We are now ready to state and prove the main theorem of this section.

\begin{Thm}
\label{vanishing}
  Let $\calp$ be a finite pointed poset, and let $F\colon \calp\op\to\Ab$ be a functor with a lower factoring section $S$. Then $F$ is acyclic, i.e.
  \[\higherlim{\calp}{n}F=0\]
  for all $n>0$.
\end{Thm}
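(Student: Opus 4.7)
The plan is to induct on $|\calp|$, peeling off one vertex at a time via Lemma \ref{locally constant} and splicing together the long exact sequences from Lemma \ref{subfunctor_exact_sequence}.

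\emph{Step 1: Reduction to $F(\ini)=0$.} The construction of $F_u$ in Lemma \ref{locally constant} still makes sense for $u=\ini$ without the hypothesis $F(\ini)=0$: since $\calp_{\ge\ini}=\calp$, case (b) in that lemma's proof is vacuous. The resulting subfunctor $F_\ini\le F$ is isomorphic (via $S$) to the constant functor with value $F(\ini)$, which is $A_\ini$ in the notation of Proposition \ref{A_x}, and is therefore acyclic. By Lemma \ref{lower factoring minimal and quotient}(b), $F/F_\ini$ inherits a lower factoring section and satisfies $(F/F_\ini)(\ini)=0$; the long exact sequence reduces the claim to acyclicity of $F/F_\ini$, so I may assume $F(\ini)=0$ henceforth.

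\emph{Step 2: Induction on $|\calp|$.} The base case $\calp=\{\ini\}$ is trivial. Otherwise, pick any vertex $u\in V_\calp$ and form $F_u$ via Lemma \ref{locally constant}; it is supported on $\calp_{\ge u}$ and agrees there, up to the natural isomorphism supplied by $S$, with the constant functor with value $F(u)$ on the poset $\calp_{\ge u}$, which has $u$ as initial object. Hence $F_u$ is acyclic by Lemma \ref{restriction higher limit} together with Proposition \ref{A_x}. The long exact sequence then reduces the claim to acyclicity of $G\defeq F/F_u$, which has a lower factoring section by Lemma \ref{lower factoring minimal and quotient}(b) and satisfies $G(u)=G(\ini)=0$.

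\emph{Step 3: The main obstacle -- passing to $\calp_0\defeq\calp\sminus\{u\}$.} One would like to finish by induction on $\calp_0$, but $\calp_0$ is \emph{not} closed upwards in $\calp$ (since $\ini\in\calp_0$ while $u\ge\ini$), so Lemma \ref{restriction higher limit} does not directly apply. Instead, I would argue at the cochain level: any chain $x_0\le\cdots\le x_n$ in $\calp$ that visits $u$ satisfies $x_0\le u$, whence $x_0\in\{\ini,u\}$ by minimality of $u$; in either case $G(x_0)=0$ and the corresponding factor of $C^n(\calp,G)$ vanishes. Therefore the inclusion $C^*(\calp_0,G|_{\calp_0})\hookrightarrow C^*(\calp,G)$ is an equality, yielding $\higherlim{\calp}{n}G\cong\higherlim{\calp_0}{n}G|_{\calp_0}$ for all $n$. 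Since $G|_{\calp_0}$ retains a lower factoring section by Lemma \ref{lower factoring minimal and quotient}(a), and $|\calp_0|<|\calp|$, the inductive hypothesis completes the proof.
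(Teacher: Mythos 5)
Your proof is correct and follows essentially the same strategy as the paper: subtract the constant subfunctor at $\ini$, then iteratively subtract the locally constant subfunctor $F_u$ at a minimal object $u$ and pass to $\calp\sminus\{u\}$. Your Step~1 unifies the first reduction with Lemma~\ref{locally constant} rather than defining $F_0$ by hand, which is a minor cosmetic improvement.

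The genuinely useful contribution is Step~3. The paper cites Lemma~\ref{restriction higher limit} to pass from $\calp$ to $\calp_1=\calp\sminus\{u\}$, but as you observe $\calp_1$ is \emph{not} closed upwards in $\calp$ (since $\ini\in\calp_1$ and $\ini\le u$), so the hypotheses of that lemma are not met as stated. One can repair this by applying the lemma twice --- first to $\calq=\calp\sminus\{\ini,u\}$, which is closed upwards in $\calp$ because $u$ is minimal, and then noting $\calq$ is also closed upwards in $\calp_1$ --- but your direct cochain-level argument (any chain visiting $u$ has $x_0\in\{\ini,u\}$, where $G$ vanishes) is cleaner and makes the step transparent. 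So the proposal is correct, matches the paper's approach, and in fact patches a small imprecision in the paper's own proof.
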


\begin{proof}
Define a functor $F_0\colon \calp\op\to\Ab$ by $F_0(x)=S(\iota_{*,x})(F(*))$ and $F_0(\iota_{x,y})=F(\iota_{x,y})\vert_{F_0(y)}$.
This is well-defined because
\[F(\iota_{x,y})(F_0(y))=F(\iota_{x,y})(S(\iota_{*,y})(F(*)))=S(\iota_{*,x})(F(*))=F_0(x).\]
Then $F_0\le F$, and by Lemma \ref{subfunctor_exact_sequence}, there is a long exact sequence
  $$\cdots\to\higherlim{\calp}{i}F_0\to\higherlim{\calp}{i}F\to\higherlim{\calp}{i}F/F_0\to\higherlim{\calp}{i+1}F_0\to\cdots.$$
  Since $F_0$ is isomorphic to the constant functor on $\calp=\calp_{\ge *}$, we have $\higherlim{\calp}{i}F_0=0$ for $i>0$ by Proposition \ref{A_x}, and so $\higherlim{\calp}{i}F\cong\higherlim{\calp}{i}F/F_0$ for $i>0$.

  Set  $G\defeq F/F_0$, for short. It is easy to verify that that $S$ restricts to a lower factoring section $S_0$ of $F_0$. Thus by Lemma \ref{lower factoring minimal and quotient}(b), $G$ has a lower factoring section, and by consruction $G(*)=0$. Thus by Lemma \ref{locally constant}, for any minimal object $u\in\calp$, there is a subfunctor $G_u\le G$ that is locally constant on $\calp_{\ge u}$ with value $G(u)$ there, and such that $G_u$ vanishes away from $\calp_{\ge u}$. By Lemma \ref{subfunctor_exact_sequence} there is a long exact sequence
  $$\cdots\to\higherlim{\calp}{i}G_u\to\higherlim{\calp}{i}G\to\higherlim{\calp}{i}G/G_u\to\higherlim{\calp}{i+1}G_u\to\cdots$$
  and by Proposition \ref{A_x}, we have $\higherlim{\calp}{i}G_u=0$ for $i>0$. Thus  $\higherlim{\calp}{i}G\cong\higherlim{\calp}{i}G/G_u$ for $i>0$.

  By Lemma \ref{locally constant}, a lower factoring section of $G$ restricts to a lower factoring section of $G_u$, so by Lemma \ref{lower factoring minimal and quotient}(b), $G/G_u$ has a lower factoring section. Let $\calp_1$ be the full sub-poset on all objects but $u$, and let $F_1$ denote $(G/G_u)\vert_{\calp_1}$. Then since $(G/G_u)(*)=(G/G_u)(u)=0$, it follows from Lemmas \ref{restriction higher limit} and \ref{lower factoring minimal and quotient}(a) that
  \[\higherlim{\calp}{i}G/G_u\cong \higherlim{\calp_1}{i}F_1.\]
  for $i\ge 0$ and $F_1$ has a lower factoring section. This shows that for all $i>0$,
  \[\higherlim{\calp}{i} F\cong \higherlim{\calp}{i} G \cong \higherlim{\calp}{i}G/G_u \cong \higherlim{\calp_1}{i}F_1.\]

Let $n+1$ be the cardinality of the object set of $\calp$. Thus $\calp$ contains $n$ objects different from the base point. Applying the procedure above $k$ times one obtains a pointed poset $\calp_k$ with $n-k+1$ objects, and a functor $F_k\colon\calp_k\to \Ab$ that admits a lower factoring section, and such that for all $i>0$,
 \[\higherlim{\calp}{i} F\cong \higherlim{\calp_k}{i}F_k.\]
This holds in particular for $k=n$, and since $\calp_n$ is the trivial pointed poset, and $\higherlim{\calp_n}{i}F_n =0$ for all $i>0$. This completes the proof.
\end{proof}



\section{Lower saturated posets}
\label{Sec-Lower_Sat}

Theorem \ref{vanishing} gives a condition on a functor defined on an arbitrary pointed poset $\calp$ that ensure its higher derived limits vanish. In this section we consider  conditions on the poset $\calp$ and a collection of morphisms $\aaa = \{a_v\colon M_v\to N_v\;|\; v\in V_\calp\}$ in $\Modk$, indexed by the vertices of $\calp$, such that the polyhedral tensor product functor $\calt_{\calp, \aaa}$ has a lower factoring section. In this setup Theorem \ref{vanishing} applies, and the higher limits of the functor $\calt_{\calp, \mathbf{a}}$  vanish. This is then used to show that under certain very general conditions the cohomology of a polyhedral product is given by a polyhedral tensor product of the cohomology of the factors.

Let $\calp$ be a poset and $\aaa=\{a_v\colon M_v\to N_v\;|\; v\in V_\calp\}$ be a collection of morphisms in $\Modk$.  A \hadgesh{section} of $\aaa$ is a collection of morphisms $\sss=\{s_v\colon N_v\to M_v\;|\; v\in V_\calp\}$ such that  for each $v\in V_\calp$, $a_v\circ s_v = 1_{N_v}$. Given a section $\sss=\{s_v\colon N_v\to M_v\;|\; v\in V_\calp\}$ for $\aaa$, define a functor $\cals_{\calp, \sss}\colon \calp\to\Modk$ by $\cals_{\calp, \sss}(x)=\calt_{\calp,\aaa}(x)$ on objects, and if $x\le y$, then
$$\cals_{\calp,\sss}(\iota_{x,y})=\bigotimes_{v\in V_\calp}g_v,\quad\text{where}\quad g_v=\begin{cases}1_{M_v}&v\in V(x) \\s_v&v\in V(y)\setminus V(x)\\1_{N_v}&v\not\in V(y)\end{cases}$$
for $x\le y\in P$. Then $\cals_{\calp, \sss}$ is clearly a section of $\calt_{\calp,\aaa}$. Next, we consider a condition on $\calp$ which ensures that the section $\cals_{\calp, \sss}$ is lower factoring.

\begin{Defi}
\label{Def-lower_factoring}
  We say that a poset $P$ is \hadgesh{lower saturated} if for each pair of objects $x,y\in \calp$ such that $[x\vee y]\neq\emptyset$,  there is an object $w\in [x\wedge y]$, such that  $V(w)=V(x)\cap V(y)$.
\end{Defi}

For instance, simplicial posets are lower saturated: if $\calp$ is a simplicial poset and $x,y\le z\in \calp$, then the map $\calp_{\le z}\to 2^{V(z)}$ that sends and object $a$ to $V(a)$ is a poset isomorphism. Hence, there is $w\in \calp_{\le z}$ such that $w\le x,y$ and $V(w)=V(x)\cap V(y)$.

\begin{Lem}
  \label{lower saturated}
  Let $\calp$ be a poset and $\aaa=\{a_v\colon M_v\to N_v\;|\; v\in V_\calp\}$ be a collection of morphisms in $\Modk$. If $\calp$ is lower saturated and $\aaa$ has a section $\sss$, then $\cals_{\calp,\sss} = \{s_v\colon N_v\to M_v\;|\; v\in V_\calp\}$ is a lower factoring section of of $\calt_{\calp, \aaa}$.
\end{Lem}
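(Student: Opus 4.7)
The plan is to verify directly the two commutativity conditions in Definition \ref{lower factoring}, exploiting the fact that both $\calt_{\calp,\aaa}$ and $\cals_{\calp,\sss}$ act on a fixed tensor product of vertex-indexed modules by applying tensor products of maps indexed by vertices. Consequently, commutativity of each diagram in \eqref{lower factoring diagram} reduces to checking equality of the corresponding per-vertex maps for every $v\in V_\calp$.

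\textbf{Choice of $w$.} Given $x,y\in\calp$ with a common upper bound $z$, the set $[x\vee y]$ is nonempty. Since $\calp$ is lower saturated, there exists $w\in[x\wedge y]$ with $V(w)=V(x)\cap V(y)$; in particular $w\le x$ and $w\le y$. This $w$ will be our candidate lower bound. The relation $V(w)=V(x)\cap V(y)$ is precisely the ingredient needed to align the two composites.

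\textbf{Per-vertex check for the left diagram.} We compute the $v$-th tensor factor of $F(\iota_{y,z})\circ S(\iota_{x,z})$ versus $S(\iota_{w,y})\circ F(\iota_{w,x})$, using the definitions of $\calt_{\calp,\aaa}$ on morphisms (which acts by $1_{M_v}$, $a_v$ or $1_{N_v}$ according to membership in the vertex sets) and the analogous description of $\cals_{\calp,\sss}$. Splitting $V_\calp$ into four regions, the outcome is:
\begin{itemize}
\item $v\in V(x)\cap V(y)=V(w)$: both composites give $1_{M_v}$.
\item $v\in V(x)\setminus V(y)$: both composites give $a_v$.
\item $v\in V(y)\setminus V(x)$: both composites give $s_v$.
\item $v\notin V(x)\cup V(y)$: both composites give $1_{N_v}$; for those $v$ with $v\in V(z)\setminus(V(x)\cup V(y))$ this uses the identity $a_v\circ s_v=1_{N_v}$, i.e.\ the fact that $\sss$ is a section of $\aaa$.
\end{itemize}
Since tensor products of maps agree iff they agree factor by factor, this yields commutativity of the left diagram.

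\textbf{Right diagram and conclusion.} The right diagram in \eqref{lower factoring diagram} is obtained from the left one by interchanging the roles of $x$ and $y$; since the choice of $w$ and the relation $V(w)=V(x)\cap V(y)$ are symmetric in $x$ and $y$, the same argument applies verbatim. Hence $\cals_{\calp,\sss}$ is a lower factoring section of $\calt_{\calp,\aaa}$. The main obstacle is purely bookkeeping -- keeping track of the four regions of $V_\calp$ and of which factor ($1_{M_v}$, $a_v$, $s_v$, or $1_{N_v}$) each of $F(\iota_{?,?})$ and $S(\iota_{?,?})$ contributes; no further input beyond lower saturation and the section property is needed.
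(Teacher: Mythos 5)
Your proof is correct and follows essentially the same route as the paper: both select $w\in[x\wedge y]$ with $V(w)=V(x)\cap V(y)$ via lower saturation and then verify the lower-factoring diagrams by comparing the tensor factors over the four regions $V(x)\cap V(y)$, $V(x)\setminus V(y)$, $V(y)\setminus V(x)$, and $V_\calp\setminus(V(x)\cup V(y))$. Your write-up is, if anything, slightly more explicit than the paper's in pointing out where the section identity $a_v\circ s_v=1_{N_v}$ enters (namely for $v\in V(z)\setminus(V(x)\cup V(y))$).
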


\begin{proof}
  Suppose $x,y\le z\in \calp$. Since $\calp$ is lower saturated, there is $w\in \calp$ such that $w\le x,y$ and $V(w)=V(x)\cap V(y)$. Then
  $$\calt_{\calp,\aaa}(\iota_{y,z})\circ \cals_{\calp,\sss}(\iota_{x,z})=\bigotimes_{v\in V_\calp}f_v=\cals_{\calp,\sss}(\iota_{w,y})\circ \calt_{\calp,\aaa}(\iota_{w,x}),$$
  where
  $$f_v=\begin{cases}1_{M_v}&v\in V(x)\cap V(y)\\a_v&v\in V(x)\setminus V(y)\\s_v&v\in V(y)\setminus V(x)\\1_{N_v}&v\not\in V(x)\cup V(y).\end{cases}$$
  Thus $\cals_{\calp,\sss}$ is  a lower factoring section, as claimed.
\end{proof}

As an immediate corollary of Theorem \ref{vanishing} and Lemma \ref{lower saturated}, one gets:

\begin{Thm}
  \label{T vanishing}
 Let $\calp$ be a lower saturated poset and let $\aaa=\{a_v\colon M_v\to N_v\;|\; v\in V_\calp\}$ be a collection of morphisms in $\Modk$ that has  a section. Then for each $n>0$,
  $$\higherlim{\calp}{n}\calt_{\calp,\aaa}=0.$$
\end{Thm}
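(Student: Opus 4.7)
The plan is to deduce this immediately from Theorem \ref{vanishing} by producing a lower factoring section for the functor $\calt_{\calp, \aaa}\colon \calp\op \to \Modk$. The candidate section is the functor $\cals_{\calp, \sss}\colon \calp \to \Modk$ constructed from the section $\sss = \{s_v\colon N_v \to M_v\;|\;v\in V_\calp\}$ of $\aaa$ by taking tensor products componentwise, exactly as described in the paragraph preceding Definition \ref{Def-lower_factoring}. It is clear by construction that $\cals_{\calp, \sss}$ agrees with $\calt_{\calp, \aaa}$ on objects and that $\calt_{\calp, \aaa}(\iota_{x,y}) \circ \cals_{\calp, \sss}(\iota_{x,y}) = \mathrm{id}$ for every $x \le y$, since on each tensor factor this composition reduces either to an identity or to $a_v \circ s_v = 1_{N_v}$. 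Thus $\cals_{\calp, \sss}$ is a section of $\calt_{\calp, \aaa}$ in the sense of Section \ref{Sec Functors with a section}.

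The key step is to verify that this section is lower factoring, and this is precisely the content of Lemma \ref{lower saturated}. Given $x, y \le z$ in $\calp$, the hypothesis that $\calp$ is lower saturated supplies an element $w \in [x \wedge y]$ with $V(w) = V(x) \cap V(y)$. With this choice of $w$, the diagrams \eqref{lower factoring diagram} commute because, on each tensor factor indexed by $v \in V_\calp$, both compositions reduce to the same elementary map — one of $1_{M_v}$, $a_v$, $s_v$, or $1_{N_v}$ — depending on which of $V(x)$ and $V(y)$ contain $v$. The explicit check is carried out in the proof of Lemma \ref{lower saturated}.

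Once $\cals_{\calp, \sss}$ is known to be a lower factoring section of $\calt_{\calp, \aaa}$, Theorem \ref{vanishing} applies directly (with $F = \calt_{\calp, \aaa}$ and $S = \cals_{\calp, \sss}$) and yields the desired vanishing $\higherlim{\calp}{n} \calt_{\calp, \aaa} = 0$ for all $n > 0$. There is no genuine obstacle: the hard work has already been carried out in Theorem \ref{vanishing}, which establishes acyclicity of any functor admitting a lower factoring section, and in Lemma \ref{lower saturated}, which supplies such a section from the hypotheses at hand. The proof is therefore essentially a one-step combination of these two results.
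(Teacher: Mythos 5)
Your proposal is correct and follows exactly the route the paper takes: the paper states Theorem \ref{T vanishing} as an immediate corollary of Theorem \ref{vanishing} and Lemma \ref{lower saturated}, which is precisely your argument. You have simply spelled out the details that the paper leaves implicit.
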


\begin{Cor}
  \label{T vanishing surjection}
 Let $\calp$ be a lower saturated poset, let $\aaa=\{a_v\colon M_v\to N_v\;|\; v\in V_\calp\}$ be a collection of surjective morphisms in $\Modk$, where $\Bbbk$ is a field. Then for all $n>0$,
  $$\higherlim{\calp}{n}\calt_{\calp,\aaa}=0.$$
  In particular, if $\{M_v\;|\; v\in V_\calp\}$ is a collection of augmented modules in $\Modk$, then $\higherlim{\calp}{n}\calt_{\calp, \eee}=0$, where $\eee$ is the corresponding collection of augmentations.
\end{Cor}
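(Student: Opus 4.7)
The plan is to deduce this from Theorem \ref{T vanishing} by observing that, when $\Bbbk$ is a field, any surjection of $\Bbbk$-modules is automatically split. Once I have this, both statements of the corollary reduce immediately to the hypotheses of the preceding theorem.

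First I would verify the splitting claim directly: given a surjection $a_v\colon M_v\to N_v$ with $N_v$ a $\Bbbk$-vector space, pick a $\Bbbk$-basis $\{n_\alpha\}$ of $N_v$, choose a preimage $m_\alpha\in a_v^{-1}(n_\alpha)$ for each $\alpha$ (possible by surjectivity), and linearly extend $n_\alpha\mapsto m_\alpha$ to a $\Bbbk$-linear map $s_v\colon N_v\to M_v$. By construction $a_v\circ s_v=1_{N_v}$, so $s_v$ is the required section. Assembling these into $\sss=\{s_v\mid v\in V_\calp\}$ gives a section of $\aaa$ in the sense of Section \ref{Sec Functors with a section}.

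With $\sss$ in hand, Theorem \ref{T vanishing} applied to the lower saturated poset $\calp$ yields $\higherlim{\calp}{n}\calt_{\calp,\aaa}=0$ for all $n>0$, which is the first assertion.

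For the final sentence, I would simply observe that an augmentation $\epsilon_v\colon M_v\to\Bbbk$ is by definition a surjective $\Bbbk$-linear map onto the ground field, so the family $\eee=\{\epsilon_v\mid v\in V_\calp\}$ is a collection of surjections to which the first part applies verbatim. There is no real obstacle here: the corollary is essentially a restatement of Theorem \ref{T vanishing} once one remembers that surjections split over a field.
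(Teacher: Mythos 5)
Your proof is correct and is exactly the argument the paper has in mind: the corollary is stated without explicit proof because it follows immediately from Theorem \ref{T vanishing} once one notes that over a field every surjection of $\Bbbk$-modules splits, and that augmentations are surjective. Nothing more needs to be said.
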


We proceed with a very simple example that shows that the lower saturation condition we impose on $\calp$ is necessary for the vanishing of higher limits.

\begin{Ex}\label{non-vanishing}
Let $\calp$ be the poset given by the following Hasse diagram:
\begin{center}
  \begin{tikzpicture}[thick]
 \node	(bp) 	at (1,0)	{ $*$ };
 \node	(a)	at (0,1)	{ $\bullet$ };
 \node	(b)	at (0,2)	{ $\bullet$ };
 \node	(c)	at (0,3)	{ $\bullet$ };
 \node	(d)	at (2,1)	{ $\bullet$ };
 \node	(e)	at (2,2)	{ $\bullet$ };
 \node	(f)	at (2,3)	{ $\bullet$ };
  \node	(aa) at 	(-0.3,1)	{\rm 1};
  \node	(bb) at 	(2.3,1) 	{\rm 2};
  \node	(cc)	at	(-0.3,2)	{\rm 3};
  \node	(dd)	at	(2.3,2)	{\rm 4};
  \node	(ee)	at	(-0.3,3)	{\rm 5};
  \node	(ff)	at	(2.3,3)	{\rm 6};
 \draw [->] (bp) edge (a);
 \draw [->] (bp) edge (d);
 \draw [->] (a) edge (b);
 \draw [->] (a) edge (e);
 \draw [->] (b) edge (c);
 \draw [->] (b) edge (f);
\draw [->] (d) edge (e);
 \draw [->] (d) edge (b);
 \draw [->] (e) edge (c);
 \draw [->] (e) edge (f);
  \end{tikzpicture}
\end{center}
Then $\calp$ is not lower saturated.
Furthermore, let $F\colon\calp\op\to\Ab$ be the functor defined by assigning the trivial group to the objects $*, 1, 2$ and $F(j) = \Z$ for $j=3\ldots 6$, with $F(\iota_{k.l}) = 1_\Z$ for $3\le k\le l$. Then $\higherlim{\calp}{1}\, F\neq 0$.
\end{Ex}
\begin{proof}
Notice first that $\calp$ is not lower saturated because the objects 3 and 4 do have an upper bound, but do not have a lower bound whose vertex set is the intersection of their corresponding vertex sets.

In the calculation of functor cohomology in this example we have to consider chains of length 0, 1 and 2. However, since we wish to compute $\higherlim{\calp}{1}\, F$, and for every sequence the value of the $F$ is at the first object in the sequence, and since $F(x)$ is trivial for $x=*, 1, 2$, we only have to consider
\begin{itemize}
\item the objects $i = 3\ldots 6$ in dimension 0,
\item $i\to j$, $i=3,4$ and $j=5,6$ in dimension 1, and
\item none in dimension 2.
\end{itemize}

Let $\beta\in C^0(\calp, F)$. Then $\beta = (a,b,c,d)$, corresponding to the objects 3,4,5,6. Notice that the morphisms between these objects are all identities. Thus by definition, we have
\[\delta(a,b,c,d)  = (c-a, d-a, c-b, d-b),\]
where the target element is indexed by $3\to 5$, $3\to 6$, $4\to 5$, and $4\to 6$ respectively. The matrix of the differential in this dimension is
\[\left(\begin{matrix}-1&\phantom{-}0&\phantom{-}1&\phantom{-}0\\-1&\phantom{-}0&\phantom{-}0&\phantom{-}1\\\phantom{-}0&-1&\phantom{-}1&\phantom{-}0\\\phantom{-}0&-1&\phantom{-}0&\phantom{-}1\end{matrix}\right),\]
which is easily seen to be of rank 3. This shows that there is a nontrivial element in $\higherlim{\calp}{1}\, F$.
\end{proof}

Next we obtain an extension of the result by Notbohm and Ray \cite{NR}, that was restated  as \eqref{Eq-NR}.

\begin{Thm}
\label{Thm-chlgy_lower_sat}
  Let $\calp$ be a lower saturated poset, let $(\bXbA)=\{(X_v,A_v)\;|\;v\in V_\calp\}$ be a collection of pairs of spaces, and let $h^*$ be a generalised cohomology theory that satisfies the strong form of the K\"unneth formula. Let $\aaa=\{a_v\colon h^*(X_v)\to h^*(A_v)\;|\; v\in V_\calp\}$. Then
  $$h^*(\calz_\calp(\bXbA))\cong \calt_\calp(\aaa).$$
\end{Thm}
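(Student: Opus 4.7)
The plan is to apply the Bousfield--Kan spectral sequence to the homotopy colimit that defines $\calz_\calp(\bXbA)$, identify its $E_2$-page with the polyhedral tensor product functor, and then invoke Theorem \ref{T vanishing} to force the spectral sequence to collapse onto its $0$th column. This will leave only $\lim_\calp \calt_{\calp,\aaa} = \calt_\calp(\aaa)$ in the abutment, yielding the desired isomorphism. (The statement implicitly requires, as in Theorem \ref{Intro-Thm-chlgy_lower_sat}, that each $a_v$ admits a section, since this is what allows Theorem \ref{T vanishing} to apply.)

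The first step is to invoke the standard Bousfield--Kan spectral sequence \cite[p.\,336]{BK} for the homotopy colimit $\calz_\calp(\bXbA) = \hocolim_\calp \calz^\calp_{\bXbA}$ (Definition \ref{Def-PolyProdPoset}):
\[
E_2^{p,q} \;=\; \higherlim{\calp}{p}\bigl(h^q\circ\calz^\calp_{\bXbA}\bigr) \;\Longrightarrow\; h^{p+q}\bigl(\calz_\calp(\bXbA)\bigr).
\]
Convergence is automatic because $\calp$ is finite, so the double complex is bounded in the horizontal direction.

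The second step is to identify the coefficient functor. For each $x\in\calp$, the space $\calz^\calp_{\bXbA}(x)$ is the finite product $\prod_{v\in V_\calp} Y_v$ with $Y_v = X_v$ if $v\in V(x)$ and $Y_v = A_v$ otherwise. Because $h^*$ satisfies the strong form of the K\"unneth formula, we obtain an isomorphism
\[
h^*\bigl(\calz^\calp_{\bXbA}(x)\bigr) \;\cong\; \bigotimes_{v\in V_\calp} h^*(Y_v),
\]
and an immediate check on the morphisms $\iota_{x,y}$ shows that these Künneth isomorphisms assemble into a natural isomorphism of functors $h^*\circ\calz^\calp_{\bXbA}\cong \calt_{\calp,\aaa}$, exactly as in \eqref{T Z}.

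The third step is the vanishing: since $\calp$ is lower saturated and each $a_v$ admits a section, Theorem \ref{T vanishing} gives $\higherlim{\calp}{p}\calt_{\calp,\aaa}=0$ for all $p>0$. Combined with the identification above, this forces $E_2^{p,q}=0$ whenever $p>0$. Hence only the $0$th column of $E_2$ survives, and since there is but a single nonzero column there are no differentials and no extension problems. Thus
\[
h^*(\calz_\calp(\bXbA)) \;\cong\; E_\infty^{0,*} \;=\; E_2^{0,*} \;=\; \lim_\calp \calt_{\calp,\aaa} \;=\; \calt_\calp(\aaa),
\]
as required.

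I do not anticipate a serious obstacle: the two nontrivial inputs, the K\"unneth identification of $h^*\circ\calz^\calp_{\bXbA}$ with $\calt_{\calp,\aaa}$ and the vanishing of higher limits, are both available off the shelf, and the collapse argument is forced by having only one nonzero column. The most delicate point is simply to make sure the K\"unneth isomorphisms are natural with respect to the inclusions $\iota_{x,y}$ so that one truly gets an isomorphism of functors on $\calp\op$, not merely a levelwise isomorphism; this is standard but worth stating explicitly.
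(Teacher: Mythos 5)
Your proof is correct and follows essentially the same route as the paper: apply the Bousfield--Kan spectral sequence to the defining homotopy colimit, identify the coefficient functor with $\calt_{\calp,\aaa}$ via the strong K\"unneth formula, and invoke Theorem~\ref{T vanishing} to kill everything off the $0$th column. Your remark that the section hypothesis on each $a_v$ is implicitly needed (and is present in the introduction's version, Theorem~\ref{Intro-Thm-chlgy_lower_sat}, but omitted from the statement of Theorem~\ref{Thm-chlgy_lower_sat}) is an accurate and useful observation.
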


\begin{proof}
Since $\calz_\calp(\bXbA)$ is defined as a homotopy colimit, one has the Bousfield-Kan spectral sequence  \cite[XII.4.5]{BK}:
\begin{equation}
  \label{BKSS}
  E_2^{p,q}\cong H^p(\calp; h^{-q}\circ \calz^\calp_{\bXbA})\quad\Longrightarrow\quad h^{-p-q}(\calz_\calp(\bXbA)).
\end{equation}
By definition the functor $\calt_{\calp,\aaa}$ coincides with the composite $h^*\circ\calz^\calp_{\bXbA}$, and
by Theorem \ref{T vanishing},
\[E_2^{p,q} = \higherlim{\calp}{p}\calt_{\calp,\aaa} = 0\]
if $p>0$. Hence the spectral sequence collapses to its 0-th column, which by definition is isomorphic to $\calt_\calp(\aaa)$, and the claim follows.
\end{proof}

Without assuming the strong form of the K\"unneth formula, one still gets an expression for the cohomology of the polyhedral product of collections of the form $\{(X_v, *)\;|\; v\in V_\calp\}$, except in that case it is not given by the polyhedral tensor product. We end this section by considering this case.

\begin{Defi}\label{Def-retraction}
Let $\calp$ be a finite pointed poset, and let   $F\colon \calp\to\Top$ be a functor. We say that a functor $R\colon \calp\op\to\Top$ is a \hadgesh{retraction for $F$}  if the following conditions are satisfied:
\begin{enumerate}[\rm(a)]
  \item $R(x)=F(x)$ for all $x\in\calp$;
  \item $R(\iota_{x,y})\circ F(\iota_{x,y})=1_{F(x)}$ for all $x\le y\in \calp$.
\end{enumerate}

A retraction $R\colon \calp\op\to\Top$ for a functor $F\colon \calp\to\Top$ is \hadgesh{upper factoring} if for any $x,y\le z\in \calp$, there is $w\in\calp$ such that $w\le x,y$ and such that the diagram
$$\xymatrix{F(x)\ar[r]^{F(\iota_{x,z})}\ar[d]_{R(\iota_{w,x})}&F(z)\ar[d]^{R(\iota_{y,z})}\\
F(w)\ar[r]^{F(\iota_{w,y})}&F(y)}$$
commutes.
\end{Defi}

\begin{Lem}
  \label{upper lower}
  Let $\calp$ be a poset and let $F\colon \calp\to\Top$ be a functor. Let $R\colon \calp\op\to\Top$ be a retraction for $F$. Then for any functor $G\colon\Top\op\to\Ab$, the composite $G\circ R\op$ is a section for $G\circ F\op$. Moreover, if $R$ is upper factoring in addition, then $G\circ R\op$ is lower factoring.
\end{Lem}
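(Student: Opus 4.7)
The plan is to treat the two claims separately; both reduce to careful bookkeeping of the variance of $G$.

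For the first claim that $G\circ R\op$ is a section of $G\circ F\op$, the equality on objects is immediate from $R(x)=F(x)$. For a morphism $\iota_{x,y}$, I would apply the contravariant functor $G$ to the retraction identity $R(\iota_{x,y})\circ F(\iota_{x,y})=1_{F(x)}$; reversing the order of composition yields
\[(G\circ F\op)(\iota_{x,y})\circ (G\circ R\op)(\iota_{x,y})=G\bigl(R(\iota_{x,y})\circ F(\iota_{x,y})\bigr)=1_{G(F(x))},\]
which is precisely the section identity of Section \ref{Sec Functors with a section}.

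For the lower factoring claim, assume $R$ is upper factoring and fix $x,y\le z$ in $\calp$. Apply the upper factoring hypothesis to produce $w\le x,y$ for which the single square of Definition \ref{Def-retraction} commutes; by the same symmetry principle invoked in the remark following Definition \ref{lower factoring}, the same $w$ also makes commute the square obtained by interchanging the roles of $x$ and $y$. Then, applying $G$ to each of these two equalities in $\Top$ reverses every arrow and the order of each composition, and the two resulting identities in $\Ab$ are exactly the commutativity of the two squares of Diagram \eqref{lower factoring diagram} for $G\circ R\op$ as a section of $G\circ F\op$, witnessed by the common $w$.

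The only step that requires genuine care is the variance bookkeeping: under $G$, each $F$-arrow in $\Top$ is sent to an arrow in $\Ab$ pointing in the ``$F\op$-direction,'' while each $R$-arrow is sent to an arrow in the ``section-direction.'' Once this translation is carried out, the upper factoring square in $\Top$ is transported onto one of the two lower factoring squares in $\Ab$, and its symmetric partner onto the other; no new ideas beyond Definitions \ref{lower factoring} and \ref{Def-retraction} are needed.
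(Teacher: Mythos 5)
Your proof is correct and takes essentially the same route as the paper, which disposes of the lemma in one line (``immediate from the definitions,'' ``follows at once''); you have simply spelled out the variance bookkeeping, and your computation of how $G$ carries the retraction identity to the section identity, and the upper-factoring square to a lower-factoring square, is accurate.

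One small point of wording deserves attention. You write that ``the same $w$ also makes commute the square obtained by interchanging the roles of $x$ and $y$.'' The upper-factoring hypothesis, applied to the pair $(y,x)$, only produces \emph{some} lower bound $w'\le x,y$ making the swapped square commute; nothing in Definition \ref{Def-retraction} forces $w'=w$. This does not damage the argument, because the remark following Definition \ref{lower factoring} asserts that verifying a single square of Diagram \eqref{lower factoring diagram} (with a witness that may depend on the ordered pair) already suffices for lower factoring. So the cleanest phrasing is: apply $G$ to the one upper-factoring square to obtain one of the two lower-factoring squares with witness $w$, and appeal to the remark for the other. As written, the claim of a common $w$ is unsupported, though it costs you nothing given the paper's own convention.
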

\begin{proof}
From the definitions it is immediate that  composite $G\circ R\op\colon\calp \to \Ab$ is a section for  $G\circ F\op\colon \calp\op\to \Ab$  (see Section \ref{Sec Functors with a section} and Definition \ref{Def-retraction}). The second statement follows at once from Definition \ref{lower factoring}.
\end{proof}

Let $\calp$ be a poset and $\mathbf{X}=\{X_v\;|\;v\in V_\calp\}$ be a collection of pointed spaces. Define a functor $R^\calp_{\mathbf{X},*}\colon \calp\op\to\Top$ by $R^\calp_{X,*}=\calz_{\mathbf{X},*}^\calp$ on objects, and for $x\le y\in \calp$, let $R^\calp_{\mathbf{X},*}(\iota_{x,y})$ be the obvious projection. Then $R^\calp_{\mathbf{X},*}$ is  a retraction for $\calz^\calp_{\mathbf{X}, *}$. By analogy to Lemma \ref{lower saturated}, one has the following.

\begin{Lem}
  \label{lower saturated R}
  Let $\calp$ be a lower saturated poset and $\mathbf{X}=\{X_v\;|\;v\in V_\calp\}$ be a collection of pointed spaces. Then $R^\calp_{\mathbf{X},*}$ is an upper factoring retraction of $\calz^\calp_{\mathbf{X}, *}$.
\end{Lem}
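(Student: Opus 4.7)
The plan is to mirror the proof of Lemma \ref{lower saturated} almost verbatim, but in the topological setting and with the arrows in the opposite direction, exploiting the fact that projections and inclusions between products of pointed spaces commute coordinate-wise in exactly the way that coordinate maps between tensor products do.

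First I would recall that by the definition of $R^\calp_{\mathbf{X},*}$, for each $x\le y$ in $\calp$ the value $R^\calp_{\mathbf{X},*}(\iota_{x,y})$ is the projection of $\prod_{v\in V_\calp} Y^{y}_v$ onto $\prod_{v\in V_\calp} Y^{x}_v$, where $Y^{x}_v = X_v$ if $v\in V(x)$ and $Y^{x}_v = *$ otherwise. Since $V(x)\subseteq V(y)$, this projection sends the factor $X_v$ (for $v\in V(y)\setminus V(x)$) to the base point, while the remaining factors are mapped identically. Because the map $\calz^\calp_{\mathbf{X},*}(\iota_{x,y})$ is the obvious inclusion that fills in the base point in the coordinates $v\in V(y)\setminus V(x)$, the composite $R^\calp_{\mathbf{X},*}(\iota_{x,y})\circ \calz^\calp_{\mathbf{X},*}(\iota_{x,y})$ is the identity, and so $R^\calp_{\mathbf{X},*}$ is a retraction of $\calz^\calp_{\mathbf{X},*}$.

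To verify the upper factoring condition, let $x,y\le z$ in $\calp$. Because $\calp$ is lower saturated and $[x\vee y]\neq\emptyset$ (witnessed by $z$), Definition \ref{Def-lower_factoring} yields an object $w\le x,y$ with $V(w)=V(x)\cap V(y)$. I would then check commutativity of the square in Definition \ref{Def-retraction} coordinate-by-coordinate. Starting from a point $(a_v)_{v\in V_\calp}\in \calz^\calp_{\mathbf{X},*}(x)$ (so $a_v\in X_v$ for $v\in V(x)$ and $a_v=*$ otherwise), both paths around the diagram produce a tuple in $\calz^\calp_{\mathbf{X},*}(y)$ whose $v$-coordinate equals $a_v$ if $v\in V(x)\cap V(y)=V(w)$ and equals $*$ otherwise. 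Indeed, the inclusion $\calz^\calp_{\mathbf{X},*}(\iota_{x,z})$ followed by the projection $R^\calp_{\mathbf{X},*}(\iota_{y,z})$ keeps the $v$-coordinate unchanged precisely on $V(x)\cap V(y)$ and sends the rest to the base point; and the projection $R^\calp_{\mathbf{X},*}(\iota_{w,x})$ followed by the inclusion $\calz^\calp_{\mathbf{X},*}(\iota_{w,y})$ does exactly the same, since $V(w)=V(x)\cap V(y)$.

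There is no real obstacle here beyond the bookkeeping; the statement is the topological counterpart of Lemma \ref{lower saturated}, and lower saturation is used in exactly the same way to produce the object $w$ whose vertex set matches the intersection $V(x)\cap V(y)$, which is the only combinatorial fact that makes the coordinate descriptions of the two composites coincide.
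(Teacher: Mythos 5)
Your proof is correct and matches the approach the paper intends: the paper states this lemma without proof, explicitly framing it as an analogy to Lemma \ref{lower saturated}, and your coordinate-by-coordinate verification that both composites send $(a_v)$ to the tuple with $a_v$ on $V(x)\cap V(y)=V(w)$ and $*$ elsewhere is precisely that analogous argument carried out.
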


\begin{Prop}
\label{Prop polyhedral product general}
  Let $\calp$ be a lower saturated poset, and let $\mathbf{X}=\{X_v\; | \; v\in V_\calp\}$ be a collection of pointed spaces indexed by the vertices of $\calp$.  Let $h^*$ be a generalised cohomology theory. Then
  $$h^*(\calz^\calp_{\mathbf{X}, *})\cong H^0(\calp, h^*\circ \calz^\calp_{\mathbf{X}, *}) = \lim_\calp (h^*\circ\calz^\calp_{\mathbf{X}, *})$$
\end{Prop}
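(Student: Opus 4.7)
The plan is to apply the Bousfield--Kan spectral sequence for the homotopy colimit defining $\calz_\calp(\mathbf{X}, *)$ and show that all columns with $p > 0$ vanish, so the spectral sequence collapses onto the $0$th column, which is by definition $\lim_\calp(h^*\circ\calz^\calp_{\mathbf{X},*})$. Explicitly, the BKSS (as in \cite[XII.4.5]{BK}) takes the form
$$E_2^{p,q} = H^p(\calp, h^{-q}\circ \calz^\calp_{\mathbf{X}, *}) \Longrightarrow h^{-p-q}(\calz_\calp(\mathbf{X}, *)),$$
so it suffices to check that $\higherlim{\calp}{p}(h^{-q}\circ \calz^\calp_{\mathbf{X},*})=0$ for all $p>0$ and all $q$.

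To establish this vanishing I would invoke Theorem \ref{vanishing}, which gives acyclicity for any functor on a pointed poset that admits a lower factoring section. To produce such a section for $h^{-q}\circ \calz^\calp_{\mathbf{X},*}\colon \calp\op\to\Ab$, I would combine two results that have already been proved: first, by Lemma \ref{lower saturated R}, the coordinate-projection functor $R^\calp_{\mathbf{X},*}\colon \calp\op\to\Top$ is an \emph{upper factoring} retraction of the functor $\calz^\calp_{\mathbf{X},*}$; and second, by Lemma \ref{upper lower}, composing an upper factoring retraction with any contravariant $G\colon \Top\op\to\Ab$ yields a lower factoring section of $G\circ (\calz^\calp_{\mathbf{X},*})\op$. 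Applying this with $G=h^{-q}$ supplies exactly the section needed, and Theorem \ref{vanishing} then gives the desired vanishing.

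Putting these two steps together, the $E_2$-page of the spectral sequence is concentrated in the column $p=0$, and the edge identification
$$E_2^{0,q} = \lim_\calp (h^{-q}\circ \calz^\calp_{\mathbf{X},*})$$
yields the claimed isomorphism. In contrast with Theorem \ref{Thm-chlgy_lower_sat}, no K\"unneth hypothesis is needed here because the target is phrased directly in terms of the inverse limit of $h^*\circ \calz^\calp_{\mathbf{X},*}$ rather than of a polyhedral tensor product of the $h^*(X_v)$'s.

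There is no real obstacle since the proof is a direct assembly of three already-proved ingredients (the BKSS, the upper factoring retraction $R^\calp_{\mathbf{X},*}$, and the acyclicity theorem); the only care required is in tracking variances, i.e.\ verifying that the contravariant composite $h^{-q}\circ\calz^\calp_{\mathbf{X},*}$ really falls under the hypotheses of Theorem \ref{vanishing} via the passage $R\mapsto G\circ R\op$ supplied by Lemma \ref{upper lower}. All the substantive work has already been carried out in Section \ref{Sec-Acyclic} and in Lemmas \ref{upper lower} and \ref{lower saturated R} above.
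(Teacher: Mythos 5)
Your proof is correct and follows exactly the same route as the paper: invoke Lemma \ref{lower saturated R} to get the upper factoring retraction $R^\calp_{\mathbf{X},*}$, pass through Lemma \ref{upper lower} to obtain a lower factoring section of $h^*\circ\calz^\calp_{\mathbf{X},*}$, apply Theorem \ref{vanishing} to kill the higher limits, and conclude via the Bousfield--Kan spectral sequence collapsing onto its $0$th column. The only cosmetic difference is that you spell out the $E_2$-page explicitly, which the paper leaves implicit.
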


\begin{proof}
By Lemmas \ref{upper lower} and \ref{lower saturated R}  the composite functor $h^*\circ \calz^\calp_{\mathbf{X}, *}$  has a lower factoring section. Hence by Theorem \ref{vanishing}, $H^n(\calp; h^*\circ \calz^\calp_{\mathbf{X}, *})=0$ for $n\ge 1$, and so the Bousfield-Kan spectral sequence for $h^*$ and $\calz^\calp_{\mathbf{X}, *}$ collapses onto its 0-column.The claim follows.
\end{proof}

Proposition \ref{Prop polyhedral product general} can be generalised to cover collections $\bXbA$ where for each $v\in V_\calp$, $A_v$ is a strict retract of $X_v$, i.e. when the inclusion $A_v\to X_v$ has a left inverse. The argument is essentially the same and is left to the reader.



\section{Polyhedral posets}
\label{Sec-Polyhedral}
In this section we specialise to a class of posets that are of primary interest in this article - \hadgesh{polyherdal posets}. Polyhedral posets enjoy a number of pleasant properties that we discuss above. In particular they are lower saturated, which makes them fit well with the analysis of previous sections.  In the main theorem of the section, Theorem \ref{Thm-PTP-relations}, we present a computation of $\calt_\calp(\mathbf{e})$ for a polyhedral poset $\calp$ and a collection of augmentation maps $e_v\colon P_\Bbbk[v]\to \Bbbk$, $v\in V_\calp$, in terms of generators and relations. This motivates a definition of the Stanley-Reisner ring of a polyhedral poset, which generalises the Stanley-Reisner ring of a simplicial poset (See Definition \ref{Def-SR-ring-Simplicial-Poset}).


\subsection{Definition and basic properties}

Recall that a \hadgesh{lower semilattice} is a poset $\calp$, such that any two objects $x,y\in\calp$ have a \hadgesh{meet}, i.e., a greatest lower bound $x\wedge y$. An important family of examples of lower semilattices occurs as  face posets of polyhedral complexes. Gr\"unbaum \cite[p.\;206]{G} refers to finite lower semilattices  as abstract complexes and so, by analogy to simplicial posets generalising the concept of the face poset of a simplicial complex, we present here \hadgesh{polyhedral posets} as a generalisation of lower semilattices.

\begin{Defi}
\label{Def-polyhedral_poset}
	A \hadgesh{polyhedral poset} is a pointed poset $\calp$ such that for any object  $x\in\calp$, the sub-poset $\calp_{\le x}$ is a lower semilattice.
\end{Defi}

\begin{Exmp}
	Simplicial posets are polyhedral posets. On the other hand the face poset of a cubical complex is polyhedral but not simplicial.
\end{Exmp}

The following characterisation of polyhedral posets is useful.

\begin{Prop}
\label{polyhedral upper bound}
  A pointed poset $\calp$ is a polyhedral poset if and only if any two objects $x,y\in\calp$ such that $[x\vee y]\nempty$, have a meet $x\wedge y$.
\end{Prop}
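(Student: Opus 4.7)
The statement is an equivalence, so I would prove the two implications separately. Both are essentially a matter of checking that meets in the sub-posets $\calp_{\le x}$ agree with meets in $\calp$ whenever everyone involved sits below $x$.

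For the forward direction, assume $\calp$ is polyhedral and suppose $x,y\in\calp$ admit an upper bound. Pick any $z\in[x\vee y]$ (or, more simply, any common upper bound $z$ of $x$ and $y$); then $x,y\in\calp_{\le z}$. By hypothesis $\calp_{\le z}$ is a lower semilattice, so $x$ and $y$ admit a meet $m$ inside $\calp_{\le z}$. The key observation to finish is that $m$ is also a meet in the larger poset $\calp$: any $w\in\calp$ with $w\le x$ and $w\le y$ automatically satisfies $w\le x\le z$, so $w\in\calp_{\le z}$, and then $w\le m$ follows from the semilattice property there.

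For the reverse direction, assume any two objects with a common upper bound have a meet in $\calp$, and fix $x\in\calp$. To show $\calp_{\le x}$ is a lower semilattice, take $y,z\in\calp_{\le x}$. Since $x$ is a common upper bound, $[y\vee z]\nempty$, so by hypothesis $y\wedge z$ exists in $\calp$. Because $y\wedge z\le y\le x$, the meet lies in $\calp_{\le x}$, and clearly it remains the meet when restricted to this sub-poset. Hence $\calp_{\le x}$ is a lower semilattice, and $\calp$ is polyhedral.

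I do not anticipate any real obstacle: the only subtlety is making sure the meet computed in the sub-poset is genuinely the meet in $\calp$, which is immediate from the fact that any common lower bound of $x$ and $y$ automatically lies in $\calp_{\le z}$ for any upper bound $z$. Once this remark is in place both directions are one-line arguments.
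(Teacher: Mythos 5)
Your proof is correct and follows essentially the same argument as the paper: both directions rest on the observation that every common lower bound of $x$ and $y$ already lies in $\calp_{\le z}$ for any common upper bound $z$, so meets computed in the sub-poset agree with meets in $\calp$. You simply spell out this key step a bit more explicitly than the paper does.
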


\begin{proof}
Assume $\calp$ is polyhedral. Let $x,y\in \calp$ be any two objects that admit a common upper bound $z$. Then any lower bound of $x$ and $y$ belongs to $\calp_{\le z}$, and since $\calp_{\le z}$ is a lower semilattice by definition, $x\wedge y$ exists in $\calp_{\le z}$ and thus in $\calp$.

Conversely, let  $z\in\calp$ be any object, and let  $x,y\in\calp_{\le z}$ be any two objects. Then the meet $x\wedge y$ exists in $\calp$ by assumption, and is an object of $\calp_{\le z}$ and a lower bound for $x$ and $y$ there.  Since $\calp_{\le z}$ is a sub-poset of $\calp$, maximality and uniqueness is clear, and so  $\calp_{\le z}$ is a lower semilattice.
\end{proof}

The following is an easy, yet important, property of polyhedral posets.
\begin{Prop}
  \label{polyhedral lower saturated}
  Polyhedral posets are lower saturated.
\end{Prop}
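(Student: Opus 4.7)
The plan is to prove Proposition \ref{polyhedral lower saturated} by a direct application of Proposition \ref{polyhedral upper bound} together with a short argument about vertices. Given $x, y \in \calp$ with $[x \vee y] \neq \emptyset$, the characterisation in Proposition \ref{polyhedral upper bound} immediately yields that the meet $w \defeq x \wedge y$ exists in $\calp$. Since meets are unique when they exist, $[x \wedge y] = \{w\}$, so the only candidate for the required maximal lower bound is this $w$, and it remains to verify $V(w) = V(x) \cap V(y)$.

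For the inclusion $V(w) \subseteq V(x) \cap V(y)$, I would note that since $w \le x$ and $w \le y$, any $v \in V(w)$ is a minimal object of $\calp$ satisfying $v \le w \le x$ and $v \le w \le y$, so by definition $v \in V(x)$ and $v \in V(y)$. For the reverse inclusion, I take $v \in V(x) \cap V(y)$, so in particular $v \in V_\calp$ is minimal in $\calp$ and is a common lower bound of $x$ and $y$. By the universal property of the meet, $v \le w$, and since $v$ is a minimal object of $\calp$, it is by definition a vertex of $w$, i.e., $v \in V(w)$.

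I don't anticipate any real obstacle here; the key observation is that ``vertex'' is defined via minimality in the ambient poset $\calp$ (not via minimality in $\calp_{\le w}$), so membership of $v \in V_\calp$ in $V(w)$ is automatic once one knows $v \le w$. The only substantive input is Proposition \ref{polyhedral upper bound}, which converts the hypothesis $[x \vee y] \neq \emptyset$ into the existence of the meet, and this has already been established in the excerpt.
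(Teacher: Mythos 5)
Your proof is correct and follows essentially the same route as the paper: invoke Proposition \ref{polyhedral upper bound} to obtain the meet $x \wedge y$, then verify $V(x\wedge y)=V(x)\cap V(y)$ by the same two-inclusion argument. The extra remark about vertices being defined by minimality in the ambient poset is a helpful clarification but does not change the substance.
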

\begin{proof}
  Let $\mathcal{P}$ be a polyhedral poset, and let $x,y\in\calp$ be objects such that $[x\vee y]\neq \emptyset$. Then by Proposition \ref{polyhedral upper bound} $x\wedge y$ exists, and it suffices to show that  $V(x\wedge y)=V(x)\cap V(y)$. If $v\in V(x\wedge y)$, then $v\le x\wedge y\le x,y$, implying $v\in V(x)\cap V(y)$. On the other hand, if $u\in V(x)\cap V(y)$, then $u\le x,y$, and so $u\le x\wedge y$. Thus $u\in V(x\wedge y)$, and the claim follows.
\end{proof}


\subsection{Reduced posets}

A finite pointed poset $\calp$ is said to be \hadgesh{reduced} if $x\le y\in\calp$ and $V(x)=V(y)$ imply $x=y$. We show that any  finite pointed poset admits a quotient poset $\widehat{\calp}$ that is a reduced, and that reduction preserves  limits under a certain condition. We also show that the reduction of a polyhedral poset remains polyhedral. This is particularly useful when studying  polyhedral tensor products over polyhedral posets.

Let $\calp$ be a pointed poset and suppose that there are $*\ne x<y\in\calp$, such that there is no $z\in\calp$ with $x<z<y$. Define a new poset $\calp_{\setminus y}$ with object set  $\obj(\calp_{\setminus y})=\obj(\calp)\setminus \{y\}$, and $u\le v\in \calp_{\setminus y}$ if and only if $u<v$ in $\calp$ or $u<y$ and $v=x$ in $\calp$. Let  $\pi\colon\calp\to\calp_{\setminus y}$ be the poset map defined by
$$\pi(u)=\begin{cases}u&u\ne x,y\\x&u=x\; \text{or}\; y.\end{cases}$$
Note that $V_\calp=V_{\calp_{\setminus y}}$.

Let $F\colon \calp\op\to\Ab$ be a functor such that $F(x)=F(y)$ and $F(\iota_{x,y})$ is the identity map. Define a functor $F_{\setminus y}\colon\calp_{\setminus y}\op\to\Ab$ by
$$F_{\setminus y}(u)=F(u)\quad\text{and}\quad F_{\setminus y}(\iota_{u,v})=\begin{cases}F(\iota_{u,v})&u<v\text{ in }\calp\\F(\iota_{u,y})&u<y,\,x=v\text{ in }\calp,\end{cases}$$
One easily observes that the diagram
$$\xymatrix{
\calp^\mathrm{op}\ar[d]^{\pi^\mathrm{op}}\ar[rr]^F  && \Ab\ar@{=}[d]\\
\calp_{\setminus y}\op\ar[rr]^{F_{\setminus y}}&&\Ab}$$
commutes.

With this setup we can now prove the following useful lemma.

\begin{Lem}
	\label{reduction}
	Let $\calp$ be a pointed poset and suppose that there are $*\ne x<y\in\calp$ such that there is no $z\in\calp$ with $x<z<y$. Let $F\colon \calp\op\to\Ab$ be a functor such that $F(x)=F(y)$ and $F(\iota_{x,y})$ is the identity map.
	Then the map
	$$\pi^*\colon\lim_{\calp_{\setminus y}} F_{\setminus y}\longrightarrow\lim_\calp F$$
induced by the projection $\pi_{\setminus y}$ is an isomorphism.
\end{Lem}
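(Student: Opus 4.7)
The plan is to identify both inverse limits with compatible families of elements and exhibit an explicit inverse to $\pi^*$. Concretely, identify $\lim_\calp F$ with the set of tuples $(f_u)_{u\in\calp}$ with $f_u\in F(u)$ satisfying $F(\iota_{u,v})(f_v)=f_u$ for every relation $u\le v$ in $\calp$, and similarly $\lim_{\calp_{\setminus y}}F_{\setminus y}$ with compatible tuples $(g_u)_{u\in\calp_{\setminus y}}$. Under these identifications $\pi^*$ sends $(g_u)$ to the tuple $(g_{\pi(u)})_{u\in\calp}$, i.e., to $f_u=g_u$ for $u\ne y$ and $f_y=g_x$. Since $\pi^*$ is manifestly a homomorphism of abelian groups, it suffices to verify that it is a bijection of underlying sets.

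Injectivity is immediate: a compatible tuple $(g_u)$ is completely determined by its values on $\obj(\calp)\setminus\{y\}=\obj(\calp_{\setminus y})$, and these are exactly the values read off from $\pi^*(g)$. For surjectivity, fix $(f_u)\in\lim_\calp F$. The hypothesis $F(\iota_{x,y})=1_{F(x)}$ combined with the compatibility relation across $x\le y$ yields the key equality
\[f_x=F(\iota_{x,y})(f_y)=f_y.\]
Define $g_u:=f_u$ for $u\in\obj(\calp_{\setminus y})$; this family plainly maps to $(f_u)$ under $\pi^*$, so it remains only to verify that it is compatible over $\calp_{\setminus y}$.

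A relation $u\le v$ in $\calp_{\setminus y}$ is of one of two types. If it comes from a relation $u\le v$ in $\calp$ with $v\ne y$, then $F_{\setminus y}(\iota_{u,v})(g_v)=F(\iota_{u,v})(f_v)=f_u=g_u$. If instead it comes from $u<y$ with $v=x$ in $\calp$, then
\[F_{\setminus y}(\iota_{u,x})(g_x)=F(\iota_{u,y})(f_x)=F(\iota_{u,y})(f_y)=f_u=g_u,\]
where the penultimate equality uses $f_x=f_y$ and the last uses compatibility of $(f_u)$ at the relation $u\le y$ in $\calp$. This completes the proof of surjectivity.

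The only subtle step is the compatibility check in the second case above: one must pass from $g_x=f_x$ to $f_y$ in order to apply the transition map $F(\iota_{u,y})$, and this is precisely what the hypothesis $F(\iota_{x,y})=1_{F(x)}$ enables. Everything else is bookkeeping with the definitions of $\calp_{\setminus y}$ and $F_{\setminus y}$, with no use of any further structural property of $\calp$.
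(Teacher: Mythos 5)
Your proof is correct and follows essentially the same route as the paper's: identify the inverse limits with compatible families, observe that injectivity is immediate since the tuple is determined on $\obj(\calp_{\setminus y})$, and deduce surjectivity from the key identity $f_x=f_y$ forced by $F(\iota_{x,y})=1_{F(x)}$. You are in fact slightly more thorough than the paper, since you explicitly verify that the restricted family $(f_u)_{u\in\calp_{\setminus y}}$ satisfies the compatibility conditions for $F_{\setminus y}$ (in particular across the new relations $u\le x$ in $\calp_{\setminus y}$ coming from $u<y$ in $\calp$), a step the paper's proof leaves implicit.
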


\begin{proof}
The map
\[\pi^*\colon \prod _{p\in\calp_{\setminus y}}F_{\setminus y}(p) \to \prod_{p\in\calp}F(p) \]
induced by $\pi_{\setminus y}$ is the identity on all coordinates except the $y$-coordinate which it sends diagonally to the $x$ and $y$ coordinates in its target. Hence it is injective and it follows that $\pi_*$ is injective on the limit as well. On the other hand, if
\[a = \{a_p\;|\;p\in\calp\}\in \lim_\calp F\subseteq \prod_{p\in\calp}F(p)\]
is any element, then  for any morphism $\iota_{p,q}$ in $\calp$, $a_p = F(\iota_{p,q})(a_q)$. In particular $a_x = a_y$, and so $a$ is in the image of $\pi^*$.
\end{proof}

As an easy corollary we have the following.

\begin{Cor}
	\label{reduction tensor}
	Let $\calp$ be a finite pointed poset, and let  $\mathbf{a}=\{a_v\colon M_v\to N_v\;|\; v\in V_\calp\}$ be a collection of morphisms in $\Modk$. Assume that there are two objects $x<y\in \calp$ with $V(x) = V(y)$, and no $z$ such that $x<z<y$. Then there is an isomorphism
	$$\calt_\calp(\mathbf{a})\cong \calt_{\calp_{\setminus y}}(\mathbf{a}).$$
\end{Cor}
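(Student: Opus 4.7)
The plan is to apply Lemma \ref{reduction} directly to the functor $F = \calt_{\calp, \mathbf{a}}$, so the work reduces to verifying the two hypotheses of that lemma and identifying the reduced functor $F_{\setminus y}$ with $\calt_{\calp_{\setminus y}, \mathbf{a}}$.

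First, I check that $\calt_{\calp,\mathbf{a}}(x) = \calt_{\calp,\mathbf{a}}(y)$ and that $\calt_{\calp,\mathbf{a}}(\iota_{x,y})$ is the identity. By Definition \ref{Def-PolyTensorProd}, both values are $\bigotimes_{v\in V_\calp} C_{*,v}$, where $C_{x,v} = M_v$ for $v \in V(x)$ and $N_v$ otherwise, and similarly for $y$. Since $V(x) = V(y)$ by hypothesis, we have $C_{x,v} = C_{y,v}$ for every $v$, and consequently $\calt_{\calp,\mathbf{a}}(x) = \calt_{\calp,\mathbf{a}}(y)$. For the morphism $\calt_{\calp,\mathbf{a}}(\iota_{x,y}) = \bigotimes_v f_v$, the factor $f_v$ equals $a_v$ precisely when $v \in V(y) \setminus V(x)$, which is empty; in the remaining cases $f_v$ is an identity morphism. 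Hence $\calt_{\calp,\mathbf{a}}(\iota_{x,y})$ is the identity.

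Next, I identify the reduced functor. Since $V_\calp = V_{\calp_{\setminus y}}$, the formula in Definition \ref{Def-PolyTensorProd} produces the same values on objects in $\calp_{\setminus y}$ as in $\calp$, and agrees with $(\calt_{\calp,\mathbf{a}})_{\setminus y}$ on the values. On the morphisms, the only nontrivial comparison is for the newly introduced relations $u < x$ in $\calp_{\setminus y}$ coming from $u < y$ in $\calp$. By the definition of $F_{\setminus y}$, we have $(\calt_{\calp,\mathbf{a}})_{\setminus y}(\iota_{u,x}) = \calt_{\calp,\mathbf{a}}(\iota_{u,y})$. On the other hand, the formula defining $\calt_{\calp_{\setminus y}, \mathbf{a}}(\iota_{u,x})$ involves $V(x) = V(y)$, so it produces the same tensor product of $1_{M_v}$'s, $a_v$'s, and $1_{N_v}$'s as $\calt_{\calp,\mathbf{a}}(\iota_{u,y})$. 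Thus $(\calt_{\calp,\mathbf{a}})_{\setminus y} = \calt_{\calp_{\setminus y}, \mathbf{a}}$ as functors on $\calp_{\setminus y}\op$.

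With these identifications in place, Lemma \ref{reduction} yields the isomorphism
\[\calt_{\calp_{\setminus y}}(\mathbf{a}) = \lim_{\calp_{\setminus y}} \calt_{\calp_{\setminus y}, \mathbf{a}} = \lim_{\calp_{\setminus y}} (\calt_{\calp, \mathbf{a}})_{\setminus y} \xrightarrow{\;\pi^*\;} \lim_{\calp} \calt_{\calp, \mathbf{a}} = \calt_\calp(\mathbf{a}),\]
which is the claim. The main thing to watch for is the bookkeeping in the third paragraph, namely that the newly created comparison morphisms $u \to x$ in $\calp_{\setminus y}$ really correspond, under the tensor product recipe, to the original morphisms $u \to y$ in $\calp$; but since $V(x) = V(y)$, the recipe gives the same answer in either case, so no subtlety arises.
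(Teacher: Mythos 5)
Your proof is correct and follows the same route as the paper's: verify the hypotheses of Lemma \ref{reduction} for $F=\calt_{\calp,\mathbf{a}}$, identify $(\calt_{\calp,\mathbf{a}})_{\setminus y}$ with $\calt_{\calp_{\setminus y},\mathbf{a}}$, and pass to limits. You are simply more explicit than the paper, which notes only that $\calt_{\calp,\mathbf{a}}(\iota_{x,y})$ is the identity and then quotes the lemma, leaving the functor identification implicit.
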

\begin{proof}
Since $V(x) = V(y)$, the map $\mathcal{T}_\calp(\mathbf{a})(\iota_{x,y})$ is the identity.
	By Lemma \ref{reduction}, there is an isomorphism $\calt_\calp(\mathbf{a})$
	$$\calt_\calp(\mathbf{a})=\lim \calt^\calp_{\mathbf{a}}\cong\lim\calt^{\calp_\setminus y}_{\mathbf{a}} = \calt_{\calp_\setminus y}(\mathbf{a}).$$
\end{proof}

\begin{Lem}
	\label{reduction lower semilattice}
	Let $\calp$ be a polyhedral poset, and assume that there are $x<y\in\calp$ such that $V(x)=V(y)$ and that there is no $z\in\calp$ with $x<z<y$. Then the poset $\calp_{\setminus y}$  also polyhedral.
\end{Lem}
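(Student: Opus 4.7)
The plan is to apply Proposition \ref{polyhedral upper bound} to $\calp_{\setminus y}$: it suffices to show that any pair $u, v \in \calp_{\setminus y}$ with a common upper bound in $\calp_{\setminus y}$ admits a meet there. The first step is to reformulate the order on $\calp_{\setminus y}$ in a convenient way. Note that $l \le u$ in $\calp_{\setminus y}$ is equivalent to $l \le u$ in $\calp$ when $u \ne x$, and $l \le x$ in $\calp_{\setminus y}$ is equivalent to $l \le y$ in $\calp$; in other words, $\calp_{\setminus y}$ is the quotient of $\calp$ obtained by identifying $x$ with $y$. Consequently, if $w$ is a common upper bound of $u, v$ in $\calp_{\setminus y}$, then in $\calp$ we obtain a common upper bound of $u, v$ by taking $w$ itself when $w \ne x$ and by taking $y$ when $w = x$.

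Now I would split into cases. If neither $u$ nor $v$ equals $x$, I would take $m = u \wedge v$ in $\calp$, which exists by polyhedrality of $\calp$. A short check shows $m \ne y$ (since $m \le u < y$ in $\calp$ whenever $u \in \calp_{\setminus y}$ is bounded above by $y$), hence $m \in \calp_{\setminus y}$, and then verify that $m$ remains the meet in $\calp_{\setminus y}$ by using the description of $\le$ recalled above. When $u = x$ and $v \ne x$, two sub-cases arise. If $v \le y$ in $\calp$, then $v \le x$ in $\calp_{\setminus y}$, so the meet is simply $v$. If $v \not\le y$ in $\calp$, then using the common upper bound in $\calp$ available to us ($w$ or $y$ as above), the meet $m' = y \wedge v$ exists in $\calp$ by Proposition \ref{polyhedral upper bound}, and I would argue $m'$ is the meet in $\calp_{\setminus y}$.

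The main obstacle is the verification in the second sub-case: the extra relations in $\calp_{\setminus y}$ (i.e., $l \le x$ whenever $l \le y$ in $\calp$) enlarge the set of lower bounds of $x$ and $v$ beyond what the $\calp$-ordering sees. The key observation to circumvent this is that the lower bounds of $x$ and $v$ in $\calp_{\setminus y}$ are precisely the elements $l \ne y$ with $l \le y$ and $l \le v$ in $\calp$; these are then exactly the elements of $\{l : l \le m'\text{ in } \calp\} \setminus \{y\}$. The hypothesis that there is no $z$ with $x < z < y$ is what ensures $m'$ behaves correctly at the boundary: if $m' = y$ (i.e.\ $v \ge y$), the set of lower bounds in $\calp_{\setminus y}$ has maximum $x$, handled by a small additional argument; otherwise $m' < y$, and $m'$ lies in $\calp_{\setminus y}$ and is the sought meet. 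Together these cases establish that every downset of $\calp_{\setminus y}$ is a lower semilattice, completing the proof.
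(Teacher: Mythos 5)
Your approach — reducing to Proposition \ref{polyhedral upper bound}, using the quotient description of $\calp_{\setminus y}$, and case-splitting on whether one of the two objects is $x$ — is in the right spirit and is essentially a more granular version of what the paper does, but there are two gaps. In Case 1 the assertion $m=u\wedge v\neq y$ is not justified: the parenthetical only treats the situation where $u$ is bounded above by $y$, but the hypotheses do not exclude $u>y$ and $v>y$ with $u\wedge v=y$ (nothing in the lemma says $y$ is maximal). When $m=y$ the intended meet in $\calp_{\setminus y}$ is $x$, which needs the same kind of boundary verification you sketch at the end of Case 2.

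The more serious gap is in Case 2, sub-case $v\not\leq y$: you cite Proposition \ref{polyhedral upper bound} to obtain $y\wedge v$, but the common upper bound you actually have in $\calp$ bounds $x$ and $v$ (it is $w$, and $w\neq x$ since otherwise $v<y$), not $y$ and $v$. Since $x\leq w$ does not imply $y\leq w$, the pair $y,v$ may have no common upper bound in $\calp$ at all, so the existence of $y\wedge v$ does not follow from that proposition. This is precisely the delicate point of the lemma: the new relations $l\leq x$ for $l<y$ can, inside a sub-poset $\calp_{\leq z}$ with $x\leq z$ but $y\not\leq z$, enlarge the set of lower bounds of $x$ and $v$ so that it no longer has a unique maximal element, and one must rule this out by hand. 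Be aware that the paper's own proof is also terse exactly here, and the identity $p\wedge y=p\wedge x$ it invokes is not valid without further hypotheses: if $m$ is a non-vertex object with $V(m)=\{a\}$, $a<m<y$, $m\not\leq x$, and $V(x)=V(y)=\{a,b\}$, then $m\wedge y=m$ while $m\wedge x=a$. So the step you wave at with ``I would argue $m'$ is the meet in $\calp_{\setminus y}$'' is not a routine check; it is the crux, and in your write-up it is missing.
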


\begin{proof}
Assume first that $\calp$ is a lower semilattice. By the assumptions on $x$ and $y$, for any $p\in\calp$, $p\wedge y = p\wedge x$. Fix objects $p,q\in\calp_{\setminus y}$. Then the meet $p\wedge q$ exists in $\calp$, because it  is a lower semilattice, and it is well defined there even in the case where $p$ or $q$ are $x$.  If $p\wedge q\neq y$ in $\calp$, then $\pi_{\setminus y}(p\wedge q)$ is obviously $p\wedge q$ in $\calp_{\setminus y}$. On the other hand, if $p\wedge q=y$, then by the definition of $\calp_{\setminus y}$, $p\wedge q=x$ there. This shows that $\calp_{\setminus y}$ is a lower semilattice.

If $\calp$ is a polyhedral poset, then $\calp_{\setminus y}$ can be obtained from $\calp$ by replacing $\calp_{\le y}$ by $(\calp_{\le y})_{\setminus y}$. Thus the the general claim follows from the previous argument.
\end{proof}

\begin{Cor}
\label{Cor-polyhedral_reduced}
	Let $\calp$ be a finite pointed poset and $\mathbf{a}=\{a_v\colon M_v\to N_v\;|\; v\in V_\calp\}$ be a collection of morphisms in $\Modk$. Then there is a reduced finite pointed poset $\widehat{\calp}$ such that $V_{\widehat{\calp}}=V_\calp$ and
	$$\calt_\calp(\mathbf{a})\cong\calt_{\widehat{\calp}}(\mathbf{a}).$$
	Furthermore, if $\calp$ is polyhedral, then so is $\widehat{\calp}$.
\end{Cor}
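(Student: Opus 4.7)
The plan is to establish the corollary by iterating the single-element reduction procedure from Lemma \ref{reduction lower semilattice} and Corollary \ref{reduction tensor}, removing one redundant element of $\calp$ at a time until what remains is reduced. Since $\calp$ is finite, finitely many steps suffice.

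First I would produce, at each stage, a \emph{covering} pair with equal vertex sets. Concretely, if $\calp$ is not reduced, then by definition there exist $* \neq x < y$ in $\calp$ with $V(x) = V(y)$. Choose a maximal chain $x = z_0 < z_1 < \cdots < z_n = y$ in the interval $[x,y]$; since $V(x) \subseteq V(z_i) \subseteq V(y) = V(x)$, one gets $V(z_i) = V(x)$ for all $i$. Hence the adjacent pair $(z_0, z_1)$ satisfies $V(z_0) = V(z_1)$ with no intermediate element, and $z_0 = x \neq *$. Setting $x' = z_0$ and $y' = z_1$, all hypotheses of Lemma \ref{reduction}, Corollary \ref{reduction tensor} and (if $\calp$ is polyhedral) Lemma \ref{reduction lower semilattice} are in force.

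Next I would verify that one step of reduction preserves all desired structure. Corollary \ref{reduction tensor} gives
\[\calt_\calp(\mathbf{a}) \cong \calt_{\calp_{\setminus y'}}(\mathbf{a}),\]
where the collection $\mathbf{a}$ is unchanged on the right since, by construction, $V_{\calp_{\setminus y'}} = V_\calp$: indeed $y'$ lies above $x' \neq *$, so $y'$ is not minimal among non-base objects and is therefore not a vertex, and the deletion of $y'$ affects neither the vertex status nor the downset of any surviving element. When $\calp$ is polyhedral, Lemma \ref{reduction lower semilattice} yields that $\calp_{\setminus y'}$ is again polyhedral.

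Iterating, I would obtain a strictly decreasing sequence $\calp \supsetneq \calp_1 \supsetneq \calp_2 \supsetneq \cdots$ of finite pointed posets, each differing from its predecessor by the deletion of one non-vertex object, all sharing the vertex set $V_\calp$ and the polyhedral tensor product $\calt_\calp(\mathbf{a})$, and all polyhedral whenever $\calp$ is. Since the cardinality of the object set strictly decreases at each step, the procedure must terminate, and termination occurs precisely when no further covering pair with equal vertex sets exists, that is, when the resulting poset is reduced. Taking $\widehat{\calp}$ to be this terminal poset completes the argument. The only step requiring any thought is the chain-refinement observation used to extract a covering pair from an arbitrary non-reduced pair; everything else is mechanical bookkeeping.
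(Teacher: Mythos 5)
Your proof is correct and follows essentially the same route as the paper: iterate the one-element reduction of Lemma \ref{reduction} / Corollary \ref{reduction tensor} / Lemma \ref{reduction lower semilattice}, and use finiteness for termination. The only substantive addition is your explicit chain-refinement argument producing a \emph{covering} pair with equal vertex sets, a hypothesis the reduction lemmas require; the paper leaves this detail implicit, so your version is, if anything, a bit more careful.
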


\begin{proof}
	Apply the reduction procedure to $\calp$ inductively. Since $\calp$ is finite this procedure terminates with a reduced  pointed poset $\widehat{\calp}$ with $V_\calp = V_{\widehat{\calp}}$, and by Corollary \ref{reduction tensor}
	$$\calt_\calp(\mathbf{a})\cong \calt_{\widehat{\calp}}(\mathbf{a}).$$
	In particular if $\calp$ is polyhedral, then by Lemma \ref{reduction lower semilattice} so is $\widehat{\calp}$.
\end{proof}


\subsection{Stanley-Reisner ring}

We now show that the definition of the Stanley-Reisner ring generalises naturally to  finite polyhedral posets.

\begin{Lem}
  \label{FPP-terminal}
 Let  $\calp$ be a finite pointed poset with a terminal object $x_0$. For each vertex $v\in V_\calp$ let $e_v\colon P_\Bbbk[v]\to \Bbbk$ be the augmentation map, and let
 $\mathbf{e}=\{e_v\colon P_\Bbbk[v]\to\Bbbk\;|\; v\in V_\calp\}$. Then the inclusion \[P_\Bbbk[V(x_0)] = P_\Bbbk[V_\calp]\to \prod_{x\in\calp}\calt_{\calp,\mathbf{e}}(x)\] induces an
 isomorphism
  $$P_\Bbbk[V_\calp]\xto{\cong}\calt_\calp(\mathbf{e})=\lim_\calp \calt_{\calp,\mathbf{e}}$$
and $\calt^n_\calp(\mathbf{e})=0$ for all $n>0$.
\end{Lem}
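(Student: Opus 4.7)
The plan is to exploit the fact that a terminal object in $\calp$ is an initial object in $\calp\op$, which trivialises both the inverse limit and its derived functors. First, since $x_0$ is terminal, every vertex $v\in V_\calp$ satisfies $v\le x_0$, so $V(x_0)=V_\calp$. By Definition \ref{Def-PolyTensorProd} applied to $\mathbf{e}=\{e_v\colon P_\Bbbk[v]\to\Bbbk\}$, we therefore have
\[
\calt_{\calp,\mathbf{e}}(x_0)=\bigotimes_{v\in V_\calp}P_\Bbbk[v]\cong P_\Bbbk[V_\calp],
\]
which identifies $P_\Bbbk[V_\calp]$ with one of the factors of the product $\prod_{x\in\calp}\calt_{\calp,\mathbf{e}}(x)$, and under this identification the inclusion in the statement is the map $p\mapsto\bigl(\calt_{\calp,\mathbf{e}}(\iota_{x,x_0})(p)\bigr)_{x\in\calp}$.

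Next I would establish the general categorical fact that if $x_0$ is terminal in $\calp$ (hence initial in $\calp\op$), then for \emph{any} functor $F\colon\calp\op\to\Modk$ the projection $\lim_\calp F\to F(x_0)$ is an isomorphism. This is the usual verification that a cone $\{\alpha_x\colon M\to F(x)\}_{x\in\calp}$ over $F$ is determined by its $x_0$-component through the formula $\alpha_x=F(\iota_{x,x_0})\circ\alpha_{x_0}$, and that any $\alpha_{x_0}\colon M\to F(x_0)$ does extend to a cone since the compositions $F(\iota_{x,y})\circ F(\iota_{y,x_0})=F(\iota_{x,x_0})$ give the required compatibility. Applied to $F=\calt_{\calp,\mathbf{e}}$, this shows that the inclusion above induces the asserted isomorphism $P_\Bbbk[V_\calp]\xto{\cong}\calt_\calp(\mathbf{e})$.

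For the vanishing of higher limits, under the identification just established the functor $\lim_\calp\colon\Funct(\calp\op,\Modk)\to\Modk$ coincides naturally with the evaluation functor $\ev_{x_0}$. Since short exact sequences in the functor category $\Funct(\calp\op,\Modk)$ are defined pointwise, evaluation at any single object is exact; hence $\lim_\calp$ is exact, and therefore all of its right derived functors vanish. In particular $\higherlim{\calp}{n}\calt_{\calp,\mathbf{e}}=0$ for every $n>0$, which is exactly the claim $\calt^n_\calp(\mathbf{e})=0$. There is no genuine technical obstacle here: everything follows once one unwinds that having a terminal object collapses the indexing category, and the only point requiring a moment's check is the compatibility of the cone defining the inverse of the projection, which is a direct consequence of the functoriality of $\calt_{\calp,\mathbf{e}}$.
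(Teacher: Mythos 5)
Your proof is correct and takes essentially the same approach as the paper: you identify $x_0$ as initial in $\calp\op$, conclude that the limit is evaluation at $x_0$ (which is $P_\Bbbk[V_\calp]$ since $V(x_0)=V_\calp$), and note that this evaluation functor is exact so all higher limits vanish. The paper states this in two sentences; your version supplies the standard cone/exactness details that make it rigorous.
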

\begin{proof}
Since $\calp$ has a terminal object, $\calp\op$ on which $\calt_{\calp,\mathbf{e}}$ is defined has an initial object. Hence the limit of
$\calt_{\calp,\mathbf{e}}$ is its value on that initial object, and all higher limits vanish.
\end{proof}

Next we present a calculation of $\calt_\calp(\mathbf{e})$ for a finite polyhedral poset.

\begin{Thm}
\label{Thm-PTP-relations}
Let $\calp$ be a finite polyhedral poset. Then there is an isomorphism
\[\calt_\calp(\mathbf{e}) \cong P_\Bbbk[\obj(\calp)]/I_\calp,\]
  where the ideal $I_\calp$ is generated by:
  \begin{enumerate}[\rm (a)]
    \item $*-1$; \label{Thm-PTP-relations-a}
    \item $x-y$, if $x<y$ and $V(x)=V(y)$;\label{Thm-PTP-relations-b}
    \item $\prod_{x\in S}x$, for $S\subset \obj(\calp)$ with $[\vee S]=\emptyset$;\label{Thm-PTP-relations-c}
    \item
    $$\prod_{\substack{R\subset S\\|R|\text{ is odd}}}\bigwedge R-\left(\prod_{\substack{R\subset S\\|R|\text{ is even}}}\bigwedge R\right)\cdot\left(\sum_{\substack{z\in[\vee S]\\V(z)=\bigcup_{w\in S}V(w)}}z\right),$$
    for $S\subset \obj(\calp)$ with $[\vee S]\ne\emptyset$. \label{Thm-PTP-relations-d}
  \end{enumerate}
\end{Thm}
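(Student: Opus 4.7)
The plan is to build an explicit $\Bbbk$-algebra isomorphism $\Phi\colon P_\Bbbk[\obj(\calp)]/I_\calp\xrightarrow{\cong}\calt_\calp(\mathbf{e})$ by specifying $\Phi$ on generators, checking that $I_\calp$ lies in the kernel, and then proving bijectivity. As a preliminary reduction I would use Corollary~\ref{Cor-polyhedral_reduced} to replace $\calp$ by its reduction $\widehat\calp$: successive applications of the elementary move $\calp\rightsquigarrow\calp_{\setminus y}$ collapse exactly those pairs $x<y$ with $V(x)=V(y)$, which is precisely what relations of type~\eqref{Thm-PTP-relations-b} impose on the algebra side. This identifies $P_\Bbbk[\obj(\calp)]/I_\calp$ with $P_\Bbbk[\obj(\widehat\calp)]/I_{\widehat\calp}$, so we may assume throughout that $\calp$ is reduced and only relations~\eqref{Thm-PTP-relations-a}, \eqref{Thm-PTP-relations-c}, \eqref{Thm-PTP-relations-d} remain.

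With $\calp$ reduced I would define $\Phi(x)=\phi_x$ on generators, where
\[
\phi_x(y)=\begin{cases}\prod_{v\in V(x)}v\in P_\Bbbk[V(y)]&\text{if }x\le y,\\0&\text{otherwise,}\end{cases}
\]
and extend multiplicatively. The only nontrivial check that $\phi_x\in\lim_\calp\calt_{\calp,\mathbf{e}}$ is for $y\le z$ with $x\le z$ and $x\not\le y$. Proposition~\ref{polyhedral lower saturated}, applied to $x,y\le z$, produces a meet $x\wedge y$ with $V(x\wedge y)=V(x)\cap V(y)$; if $V(x)\subseteq V(y)$ then $V(x\wedge y)=V(x)$ and reducedness forces $x\wedge y=x$, contradicting $x\not\le y$. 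So some $v\in V(x)\sminus V(y)$ exists, and the transition $\calt_{\calp,\mathbf{e}}(\iota_{y,z})$ augments it to $0$, killing the monomial $\prod_{v\in V(x)}v$ as required.

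The central step is verifying that $\Phi$ kills each generator of $I_\calp$, which I would do coordinatewise at each $y\in\calp$. Relation~\eqref{Thm-PTP-relations-a} is immediate from $V(\ini)=\emptyset$, giving $\phi_\ini=1$. For~\eqref{Thm-PTP-relations-c}, if $[\vee S]=\emptyset$ then no $y$ lies above every element of $S$, so at each coordinate one factor of $\prod_{x\in S}\phi_x$ vanishes. Relation~\eqref{Thm-PTP-relations-d} is the combinatorial core: using Proposition~\ref{polyhedral lower saturated} to identify $V(\bigwedge R)=\bigcap_{w\in R}V(w)$ (with the convention $\bigwedge\emptyset=\ini$), each side becomes a monomial in $P_\Bbbk[V(y)]$ multiplied by indicator functions $\mathbb{1}[\bigwedge R\le y]$, and, on the right, $\mathbb{1}[z\le y]$. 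I would split cases on whether $S\subseteq\calp_{\le y}$: if not, then the odd-product factor corresponding to any $w\in S$ with $w\not\le y$ vanishes while no $z\in[\vee S]$ with $V(z)=\bigcup_{w\in S}V(w)$ satisfies $z\le y$ (since such a $z$ lies above $w$), so both sides are zero; if yes, then $S$ has a unique minimal upper bound lying below $y$, namely the join $u=\bigvee_{\calp_{\le y}}S$ in the lower semilattice $\calp_{\le y}$, and the alternating product of meet-monomials over subsets of $S$ differs by exactly $\prod_{v\in\bigcup_{w\in S}V(w)}v$, which is contributed on the right by $u$ precisely when $V(u)=\bigcup_{w\in S}V(w)$.

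For bijectivity I would establish a normal form for the quotient: iterating relations~\eqref{Thm-PTP-relations-c} and~\eqref{Thm-PTP-relations-d} reduces any element of $P_\Bbbk[\obj(\calp)]/I_\calp$ to a $\Bbbk$-linear combination of monomials of the shape $x\cdot\prod_{v\in V(x)}v^{n_v}$ with $x\in\obj(\calp)$ and $n_v\ge 0$. Under $\Phi$ such a monomial maps to the compatible family supported on $\calp_{\ge x}$ whose value at $y\ge x$ is $\prod_{v\in V(x)}v^{n_v+1}$; distinct normal forms produce linearly independent families (giving injectivity), and every compatible family decomposes as a finite sum of such images by induction on the support (giving surjectivity), compatible with the acyclicity furnished by Theorem~\ref{T vanishing}. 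The main obstacle I expect is the verification of relation~\eqref{Thm-PTP-relations-d}: the interplay between the alternating meet-products, the vertex-set-constrained sum over $[\vee S]$, and the coordinatewise indicator functions is delicate, and the full polyhedral hypothesis—not just lower saturation—is essential, since only then do the meets $\bigwedge R$ exist beneath every common upper bound and have the vertex sets $\bigcap_{w\in R}V(w)$ needed to make the inclusion–exclusion identity balance.
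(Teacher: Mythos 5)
Your overall strategy mirrors the paper's: both construct the map sending $x\in\obj(\calp)$ to the compatible family whose component at $y\ge x$ is the monomial $\prod_{v\in V(x)}v$ and is zero elsewhere, then verify that the generators of $I_\calp$ are annihilated and establish bijectivity. The paper first treats the case of a lower semilattice with a terminal object (where $\calt_\calp(\mathbf{e})\cong P_\Bbbk[V_\calp]$ by Lemma~\ref{FPP-terminal}), identifies $\calt_{\calp,\mathbf{e}}(x)\cong P_\Bbbk[\obj(\calp_{\le x})]/I_{\calp_{\le x}}$ for each $x$, and then proves $\Ima(\pi)=\lim$ and $\Ker(\pi)=I_\calp$; you instead propose a normal-form argument for bijectivity. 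That is a reasonable variation, though the existence, uniqueness and linear independence of your normal forms $x\prod_{v\in V(x)}v^{n_v}$ are asserted rather than proved, and the reduction of arbitrary monomials to that shape would require real work.

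The substantive issue is in your verification of relation~(d). You note, when $S\subseteq\calp_{\le y}$, that the join $u=\bigvee_{\calp_{\le y}}S$ contributes $\prod_{v\in\bigcup_{w\in S}V(w)}v$ on the right ``precisely when $V(u)=\bigcup_{w\in S}V(w)$,'' but you never say what happens when $V(u)\supsetneq\bigcup_{w\in S}V(w)$. In that case the constrained sum in relation~(d) contributes nothing at coordinate $y$, while the inclusion--exclusion identity shows the odd-product side equals the even-product side times $\prod_{v\in\bigcup_{w\in S}V(w)}v\neq 0$, so relation~(d) does \emph{not} lie in $\Ker(\Phi)$. This is not a fixable omission in the write-up: take $\calp$ to be the face poset of a square with cyclically ordered vertices $1,2,3,4$ and top face $\sigma$. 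Then $\calt_\calp(\mathbf{e})\cong\Bbbk[1,2,3,4]$ by Lemma~\ref{FPP-terminal}, but for $S=\{1,3\}$ one has $[\vee S]=\{\sigma\}$ with $V(\sigma)=\{1,2,3,4\}\neq V(1)\cup V(3)$, so relation~(d) forces $1\cdot 3=0$ in $P_\Bbbk[\obj(\calp)]/I_\calp$, whereas $1\cdot 3\neq 0$ in $\Bbbk[1,2,3,4]$. The paper's own proof has the same hidden assumption: it verifies the identity $\prod\bigwedge R=(\prod\bigwedge R)\cdot z$ in $P_\Bbbk[\obj(\calp_{\le z})]/I_{\calp_{\le z}}$ via inclusion--exclusion, which requires $V(z)=\bigcup_{w\in S}V(w)$, and never explains why $\pi_y$ kills relation~(d) when the unique upper bound of $S$ below $y$ has strictly more vertices. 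So the gap you would hit while filling in your sketch is a genuine one in the statement itself, not merely in your argument, and the proof cannot go through without either restricting the class of posets or modifying relation~(d).
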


\begin{proof}
By Corollary \ref{Cor-polyhedral_reduced} we may assume that $\calp$ is reduced.
Assume first that $\calp$ is a lower semi lattice with a terminal object, so that $\calt_\calp(\mathbf{e})\cong P_\Bbbk[V_\calp]$, by Lemma \ref{FPP-terminal}. Consider the composite of the  inclusion followed by the projection
\[P_\Bbbk[V_\calp]\to P_\Bbbk[\obj(\calp)]\to P_\Bbbk[\obj(\calp)]/I_\calp.\]
 Notice first that relation  \ref{Thm-PTP-relations}(\ref{Thm-PTP-relations-a}) removes $*$ as a generator, while the relation \ref{Thm-PTP-relations}(\ref{Thm-PTP-relations-b}) is redundant since $\calp$ is assumed reduced.  Since $\calp$ has a terminal object, the relation (\ref{Thm-PTP-relations-c}) is also redundant. For any object $x\in\calp$  the relation (\ref{Thm-PTP-relations-d}) implies that   $x=\prod_{v\in V(x)}v$ in $P_\Bbbk[\obj(\calp)]/I_\calp$. Thus the composite above is an isomorphism, and by Lemma \ref{FPP-terminal} the theorem holds in this case.

By definition, for any finite pointed poset,
\[\calt_\calp(\mathbf{e}) = \lim_{\calp}\calt_{\calp,\mathbf{e}}\subseteq \prod_{x\in\calp}\calt_{\calp,\mathbf{e}}(x).\]
If $\calp$ is a finite polyhedral poset, then for each $x\in\calp$ the sub-poset $\calp_{\le x}$ is a lower semilattice with a terminal object. Hence $\calt_{\calp,\mathbf{e}}(x)$ is naturally isomorphic to $P_\Bbbk[\obj(\calp_{\le x})]/I_{\calp_{\le x}}$. For each $x\in\calp$, let
\[\pi_x\colon P_\Bbbk[\obj(\calp)]\to P_\Bbbk[\obj(\calp_{\le x})]/I_{\calp_{\le x}}\]
denote the map given by sending each $y\in\calp$ to the corresponding class in $P_\Bbbk[\obj(\calp_{\le x})]/I_{\calp_{\le x}}$ if $y\le x$, and to 0 otherwise. Assembling these maps together we obtain a map
\[\pi=\prod_{x\in\calp}\pi_x\colon P_\Bbbk[\obj(\calp)]\to \prod_{x\in\calp}P_\Bbbk[\obj(\calp_{\le x})]/I_{\calp_{\le x}}.\]

We claim that $\Ima(\pi) = \calt_\calp(\mathbf{e})\subseteq \prod_{x\in\calp}P_\Bbbk[\obj(\calp_{\le x})]/I_{\calp_{\le x}}$. Let $x_1^{k_1}x_2^{k_2}\cdots x_r^{k_r}$ be any monomial in the variables $x_i\in\calp$. Then for each $x\in\calp$,
\[\pi_x(x_1^{k_1}x_2^{k_2}\cdots x_r^{k_r}) = \left(\prod_{v_1\in V(x_1)}v_1^{k_1}\right)\left(\prod_{v_2\in V(x_2)}v_2^{k_2}\right)\cdots \left(\prod_{v_r\in V(x_r)}v_r^{k_r}\right),\]
if $x_i\le x,\;\forall i=1\ldots r$, and $\pi_x(x_1^{k_1}x_2^{k_2}\cdots x_r^{k_r}) = 0$ otherwise.
Here the image of  $x_i$ in $P_\Bbbk[\obj(\calp_{\le x})]/I_{\calp_{\le x}}$ is identifies with the product of its vertices. By definition, the image of each such monomial is a member of the inverse limit. Since $\pi$ is a homomorphism, its image is contained in the inverse limit. On the other hand, let $u = (u_x\;|\;x\in\calp)$ be an element in the inverse limit. Then for each $x\in\calp$ and all  $y\geq x$, $u_x=u_y$. It now follows easily that $u\in\Ima(\pi)$. This proves our claim.

It remains to show that $\Ker(\pi) = I_\calp$. The element $*-1$ is clearly in the kernel of each $\pi_x$. Since $\calp$ is reduced by assumption, the relation (\ref{Thm-PTP-relations-b}) is redundant. If $S\subseteq\obj(\calp)$ is such that $[\vee S]=\emptyset$, then by definition of the map $\pi$, the product $\prod_{x\in S} x\in\Ker(\pi)$. Hence the relation (\ref{Thm-PTP-relations-c}) holds. Finally let  $S\subseteq\obj(\calp)$ is such that $[\vee S]\neq\emptyset$. Then it suffices to show that for each $z\in[\vee S]$ the identity
\[\prod_{\substack{R\subset S\\|R|\text{ is odd}}}\bigwedge R-\left(\prod_{\substack{R\subset S\\|R|\text{ is even}}}\bigwedge R\right)\cdot z =0\]
holds in $P_\Bbbk[\obj(\calp_{\le z})]/I_{\calp_{\le z}}$. But, for each $x\le z$, one has the identity $x = \prod_{v\in V(x)} v$ in $P_\Bbbk[\obj(\calp_{\le z})]/I_{\calp_{\le z}}$.  Furthermore, if $R\subseteq S$, then in $P_\Bbbk[\obj(\calp_{\le z})]/I_{\calp_{\le z}}$ one has the identity
\[\wedge R = \prod_{v\in V(x)\atop\forall x\in R} v.\]
Hence we must show that the relation
\begin{equation}\label{IE-princ}
\prod_{\substack{R\subset S\\|R|\text{ is odd}}}\prod_{v\in V(x)\atop\forall x\in R} v = \left(\prod_{\substack{R'\subset S\\|R'|\text{ is even}}}\prod_{u\in V(y)\atop\forall y\in R'} u\right)\cdot \prod_{w\in V(z)} w
\end{equation}
holds in $P_\Bbbk[\obj(\calp_{\le z})]/I_{\calp_{\le z}}$. Fix a set of objects $S= \{x_1,\ldots x_m\}\subseteq\obj(\calp)$, and for each $1\le i\le m$, let $A_i = V(x_i)$. Then the relation (\ref{IE-princ}) can be rewritten as
\[\prod_{\substack{\emptyset\ne I\subset\{1,\ldots,m\}\\|I|\text{ is odd}}}\prod_{v\in \bigcap_{i\in I}A_i} v = \left(\prod_{\substack{\emptyset\ne J\subset\{1,\ldots,m\}\\|J|\text{ is even}}}\prod_{u\in \bigcap_{j\in J}A_j} u \right)\cdot \prod_{w\in \bigcup_{k=1}^m A_k} w,\]
and this follows at once from the
 inclusion-exclusion principle
$$\left|\bigcup_{i=1}^mA_i\right|=\sum_{\emptyset\ne I\subset\{1,\ldots,m\}}(-1)^{|I|-1}\left|\bigcap_{i\in I}A_i\right|$$
for any family of finite sets $A_1,\ldots,A_m$. This shows that $I_\calp\subseteq \Ker(\pi)$.

Conversely, let $q\in\Ker(\pi)$ be any element. Then $q$ can be expressed uniquely as a linear combination $q = a_1m_1+a_2m_2+\cdots a_rm_r$, where $m_i$ are monomials on the objects of $\calp$ and $a_i\in\Bbbk$. Without loss of generality we may assume that no $m_i$ is divisible by a product  of objects without an upper bound, since those monomials would be  in $I_\calp$ by definition. Furthermore, $\pi(q)=0$ if and only if $\pi_z(q)=0$ for every maximal object $z\in\calp$, by definition of the inverse limit. Hence we may restrict attention only to those maximal objects $z$, such that for each $1\le i\le r$, every variable present in the monomial $m_i$ is bounded above by $z$. Thus, for each such maximal object $z$, $q\in P_\Bbbk[\obj(\calp_{\le z})]$. Now, since the composite
\[P_\Bbbk[\obj(\calp_{\le z})]\xto{\inc}P_\Bbbk[\obj(\calp)]\xto{\proj}P_\Bbbk[\obj(\calp_{\le z})]\] is the identity, and since $q$ may be assumed to be an element of $P_\Bbbk[\obj(\calp_{\le z})]$, it follows that $q$ is in the kernel of the composite
\[P_\Bbbk[\obj(\calp_{\le z})]\xto{\inc}P_\Bbbk[\obj(\calp)]\xto{\proj}P_\Bbbk[\obj(\calp_{\le z})]/I_{\calp_{\le z}}\]
which is $I_{\calp_{\le z}}\subseteq I_\calp$. Thus $\Ker(\pi)\subseteq I_\calp$, and so equality holds. This completes the proof of the theorem.
\end{proof}

\begin{Cor}\label{Cor-generalization-of-simp}
Let $\calp$ be a simplicial poset, and let $\Bbbk[\calp]$ be its  Stanley-Reisner ring over $\Bbbk$. Then there is an isomorphism of $\Bbbk$-algebras
\[\Bbbk[\calp]\cong\calt_\calp(\mathbf{e}).\]
\end{Cor}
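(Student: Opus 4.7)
The proof will follow directly from Theorem \ref{Thm-PTP-relations} by comparing presentations. Since every simplicial poset is polyhedral (as noted following Definition \ref{Def-polyhedral_poset}), Theorem \ref{Thm-PTP-relations} applies and produces an isomorphism $\calt_\calp(\mathbf{e}) \cong P_\Bbbk[\obj(\calp)]/J_\calp$, where $J_\calp$ is the ideal generated by the relations (a)--(d) of that theorem. Since Definition \ref{Def-SR-ring-Simplicial-Poset} gives $\Bbbk[\calp] = P_\Bbbk[\obj(\calp)]/I_\calp$ with $I_\calp$ generated by Stanley's three relations, it suffices to prove $I_\calp = J_\calp$.

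For the inclusion $I_\calp \subseteq J_\calp$, I would match generators one at a time. Stanley's (i) is identical to (a). Stanley's (ii), namely $xy$ for $[x\vee y]=\emptyset$, is the $|S|=2$ instance of (c). For (iii), the key observation is that in a simplicial poset any $z \in [x\vee y]$ automatically satisfies $V(z)=V(x)\cup V(y)$: the containment $V(z)\supseteq V(x)\cup V(y)$ is immediate, and since $\calp_{\le z}$ is boolean, the unique element $z' \le z$ with $V(z')=V(x)\cup V(y)$ is a common upper bound of $x$ and $y$, forcing $z=z'$ by the minimality of $z$. With this, Stanley's (iii) becomes exactly the $|S|=2$ instance of (d).

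For the reverse inclusion $J_\calp \subseteq I_\calp$, I would first observe that (b) is vacuous in a simplicial poset: if $x\le y$ with $V(x)=V(y)$, then both are elements of the boolean algebra $\calp_{\le y}$ with the same vertex subset, so $x=y$. For (c) with $|S|\ge 3$, an induction on $|S|$ suffices: rewrite $x_1 x_2 = (x_1 \wedge x_2)\sum_u u$ using (iii) and multiply by $\prod_{i\ge 3} x_i$; each resulting term either vanishes by (ii) (if some emerging pair has no common upper bound) or can be reduced further, and since $[\vee S]=\emptyset$ the iteration must eventually hit a vanishing pair, for otherwise it would produce an element of $[\vee S]$. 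For (d) with $|S|\ge 3$, the identity is the ring-theoretic incarnation of the inclusion-exclusion formula $|\bigcup_i V(x_i)| = \sum_{\emptyset\ne I}(-1)^{|I|-1}|\bigcap_{i\in I} V(x_i)|$ invoked in the proof of Theorem \ref{Thm-PTP-relations}; I would establish it by induction on $|S|$, using Stanley's (iii) to split off one variable at a time and the simplicial structure to identify $V(u) = V(x_i)\cup V(x_j)$ for $u\in [x_i\vee x_j]$.

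The main obstacle is the general-$|S|$ case of (d), since the iterative expansion via (iii) branches over all minimal upper bounds and requires careful bookkeeping. A cleaner alternative, which I would pursue if the direct induction proves unwieldy, is to identify both ideals as the kernel of the evaluation map $\pi : P_\Bbbk[\obj(\calp)] \to \prod_{z\in\calp} P_\Bbbk[\obj(\calp_{\le z})]/I_{\calp_{\le z}}$ appearing in the proof of Theorem \ref{Thm-PTP-relations}. It is straightforward to check that Stanley's relations lie in $\Ker(\pi)$; conversely, since each $\calp_{\le z}$ is boolean so that $\calt_{\calp_{\le z}}(\mathbf{e}) \cong P_\Bbbk[V(z)]$ by Lemma \ref{FPP-terminal}, a basis argument (for instance Stanley's description of a chain basis of $\Bbbk[\calp]$) allows one to conclude that Stanley's relations already generate $\Ker(\pi)$.
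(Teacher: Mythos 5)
The paper's own proof of this corollary is a single sentence: it ``follows at once from Theorem~\ref{Thm-PTP-relations}, since a simplicial poset is in particular polyhedral.'' That one-liner implicitly asks the reader to accept that, when $\calp$ is simplicial, the ideal $I_\calp$ of Theorem~\ref{Thm-PTP-relations} coincides with the ideal of Definition~\ref{Def-SR-ring-Simplicial-Poset}; the authors lean on the earlier-cited result of L\"u--Panov for the equivalence. Your proof does not take this for granted, and instead verifies the identification of the two presentations directly. That is both more honest and more work than what the paper does, and the parts you carry out in detail are correct: the matching of Stanley's (i) and (ii) with (a) and the $|S|=2$ case of (c); the observation that in a boolean $\calp_{\le z}$ every minimal upper bound of $x,y$ has vertex set $V(x)\cup V(y)$, so that the $|S|=2$ case of (d) is exactly Stanley's (iii); the vacuity of (b); and the induction for (c) with $|S|\ge 3$, where the key point (that any upper bound of $\{z,x_3,\dots,x_m\}$ with $z\in[x_1\vee x_2]$ would be an upper bound of $S$) is exactly right.

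The one place you rightly flag as the bottleneck is (d) for $|S|\ge 3$, and your fallback --- show $I_\calp^{\text{Stanley}} = \Ker(\pi)$ directly, using that each $\calt_{\calp,\mathbf e}(z) \cong P_\Bbbk[V(z)]$ by Lemma~\ref{FPP-terminal} and then invoke Stanley's chain basis (or, more cheaply, the inclusion $I_\calp^{\text{Stanley}}\subseteq I_\calp^{\text{Theorem}}=\Ker(\pi)$ already gives a surjection $\Bbbk[\calp]\twoheadrightarrow\calt_\calp(\mathbf e)$, and L\"u--Panov gives the abstract isomorphism, so the surjection must be injective) --- is sound and arguably cleaner than grinding through the inclusion-exclusion induction. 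In short: you and the paper reach the same conclusion by the same reduction to Theorem~\ref{Thm-PTP-relations}, but you supply the presentation-matching argument that the paper compresses into ``at once.'' The trade-off is length versus rigour, and your version is the one a careful referee would want to see.
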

\begin{proof}
This follows at once from Theorem \ref{Thm-PTP-relations}, since a simplicial poset is in particular polyhedral.
\end{proof}

Corollary \ref{Cor-generalization-of-simp} provides the motivation to generalise the definition of the Stanley-Reisner ring over $\Bbbk$ to finite polyhedral posets.

\begin{Defi}
\label{Def-general-SR-ring}
  Let $\calp$ be a finite polyhedral poset. Define the Stanley-Reisner ring of $\calp$ over $\Bbbk$  to be the quotient ring
  $$\Bbbk[\calp]\defeq P_\Bbbk[\obj(\calp)]/I_\calp,$$
  where $I_\calp$ is  the ideal  defined in Theorem \ref{Thm-PTP-relations}
  \end{Defi}

As an easy consequence we obtain a generalisation of the  Davis-Januszkiewicz result \cite[Theorem 4.8]{DJ} (See Equation \eqref{k[K] DJ}).

\begin{Cor}
  \label{DJ k[P]}
Let $\calp$ be a finite polyhedral poset. Then  there is an isomorphism
  $$H^*(\calz_\calp(\C P^\infty,*),\Bbbk)\cong\Bbbk[\calp].$$
\end{Cor}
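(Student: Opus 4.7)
The strategy is to combine three earlier results: the vanishing of higher limits over lower saturated posets (Theorem \ref{Thm-chlgy_lower_sat}), the fact that polyhedral posets fall into this class (Proposition \ref{polyhedral lower saturated}), and the explicit computation of the polyhedral tensor product for augmentation maps (Theorem \ref{Thm-PTP-relations}).

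The plan is to take the collection of pairs $(\bXbA) = \{(\C P^\infty,*)\;|\; v\in V_\calp\}$ and set $h^* = H^*(-;\Bbbk)$. Since $H^*(\C P^\infty;\Bbbk) = P_\Bbbk[t]$ is a free $\Bbbk$-module concentrated in even degrees, the strong form of the K\"unneth formula holds for arbitrary finite products of copies of $\C P^\infty$ (and of the point), so the hypothesis on $h^*$ in Theorem \ref{Thm-chlgy_lower_sat} is satisfied. The induced map $a_v\colon H^*(\C P^\infty;\Bbbk)\to H^*(*;\Bbbk)$ is the augmentation $e_v\colon P_\Bbbk[v]\to\Bbbk$, which trivially admits a section, namely the unit map $\Bbbk\hookrightarrow P_\Bbbk[v]$.

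First I would invoke Proposition \ref{polyhedral lower saturated} to conclude that $\calp$, being polyhedral, is lower saturated. This makes Theorem \ref{Thm-chlgy_lower_sat} applicable to the collection $\aaa = \eee = \{e_v\;|\; v\in V_\calp\}$, yielding the natural isomorphism
\[
H^*(\calz_\calp(\C P^\infty,*);\Bbbk)\cong \calt_\calp(\eee).
\]
Next, I would apply Theorem \ref{Thm-PTP-relations} to identify the right-hand side with $P_\Bbbk[\obj(\calp)]/I_\calp$, which is precisely the Stanley-Reisner ring $\Bbbk[\calp]$ of Definition \ref{Def-general-SR-ring}. Composing the two isomorphisms produces the desired identification.

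There is essentially no real obstacle here, since the corollary is a direct consequence of results already established. The only minor point to verify is that the Bousfield--Kan spectral sequence used in Theorem \ref{Thm-chlgy_lower_sat} is compatible with the multiplicative structure, so that the collapse at $E_2$ yields an isomorphism of graded $\Bbbk$-algebras rather than merely of graded $\Bbbk$-modules. This is standard: the Bousfield--Kan spectral sequence for a homotopy colimit of spaces with polynomial cohomology carries a multiplicative structure induced by the diagonal, and on the $0$-th column it recovers the ring structure of $\lim_\calp \calt_{\calp,\eee}$, which matches the Stanley--Reisner ring structure. One may therefore conclude the isomorphism as one of graded $\Bbbk$-algebras.
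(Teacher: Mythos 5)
Your proof follows the same three-step route as the paper: invoke Proposition \ref{polyhedral lower saturated} to get lower saturation, apply Theorem \ref{Thm-chlgy_lower_sat} with the augmentation collection $\eee$ to identify $H^*(\calz_\calp(\C P^\infty,*);\Bbbk)$ with $\calt_\calp(\eee)$, and then cite Theorem \ref{Thm-PTP-relations}. Your extra remarks on the K\"unneth hypothesis and the multiplicative structure of the Bousfield--Kan spectral sequence are reasonable supplementary checks but do not change the argument.
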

\begin{proof}
By Proposition \ref{polyhedral lower saturated}, $\calp$ is lower saturated, and by Theorem \ref{Thm-chlgy_lower_sat}, $H^*(\calz_\calp(\C P^\infty,*))$ is isomorphic to the polyhedral tensor product $\calt_\calp(\mathbf{e})$, where $\mathbf{e}$ is the collection of augmentations $\{P_\Bbbk[v]\to\Bbbk\;|\; v\in V_\calp\}$. The claim now follows from Theorem \ref{Thm-PTP-relations}.
\end{proof}



\section{Colimits and homotopy colimits}
\label{Sec-Colim_vs_Hocolim}

In this section we discuss a comparison of colimits and homotopy colimits. These are of course not the same in general, but do coincide up to homotopy under certain circumstances. In particular, in our context, polyhedral products for simplicial complexes are  defined as colimits, whereas we defined them as homotopy colimits. Hence our aim is to show that in familiar cases our definition coincides up to homotopy with the standard definition.

We start with a useful concept introduced by Ziegler and \u{Z}ivaljevi\'c \cite{ZZ}.
\begin{Defi}\label{Def-arrangement}
Let $X$ be a topological space and $\cala=\{A_1,\ldots,A_r\}$ be a collection of subspaces of $X$. The collection $\mathcal{A}$ is said to be an \hadgesh{arrangement}, if the following conditions hold:
\begin{enumerate}[\rm (a)]
	\item For any $A,B\in\mathcal{A}$, $A\cap B$ is a union of elements of $\mathcal{A}$, and \label{arrangement-a}
	\item For any $A,B\in\mathcal{A}$ such that $A\subset B$, the inclusion $A\to B$ is a cofibration. \label{arrangement-b}
\end{enumerate}
\end{Defi}
The elements of an arrangement $\mathcal{A}$  admits a partial order given by inclusion. The resulting poset is referred to as  the intersection poset of $\mathcal{A}$  (which differs from the usual sense of intersection posets becausethe usual one consists of all intersections of elements of $\cala$). The following is proved in \cite[Projection Lemma 1.6]{ZZ}.

\begin{Lem}
	\label{projection lem}
	Let $\cala$ be an arrangement with the intersection poset $I(\cala)$, and let $F\colon I(\cala)\to\Top$ be the  functor that associates with a member of $I(\cala)$ the corresponding element of $\cala$. Then the natural map
	$$\hocolim_{I(\cala)}\,F\to\colim_{I(\cala)}\,F$$
	is a homotopy equivalence.
\end{Lem}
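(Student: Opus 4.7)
My plan is to proceed by induction on $|\cala|$. The base case $|\cala|=1$ is immediate, since both the colimit and the homotopy colimit reduce to the unique element of the diagram.

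For the induction step, I would fix a maximal element $A \in \cala$, set $\cala' = \cala \setminus \{A\}$, and let $I(\cala')_{<A}$ denote the sub-poset of $I(\cala')$ of elements strictly contained in $A$. By condition (a) of Definition \ref{Def-arrangement}, for any point $x \in A$ lying in some $B \in \cala'$, there exists some $C \in \cala$ with $x \in C \subseteq A \cap B$, and by maximality of $A$ this $C$ lies in $\cala'$ and satisfies $C < A$. Hence the intersection $A \cap \colim_{I(\cala')} F|_{I(\cala')}$ equals $\colim_{I(\cala')_{<A}} F|_{I(\cala')_{<A}}$, yielding a strict pushout decomposition
$$\colim_{I(\cala)} F \;\cong\; A \cup_{\colim_{I(\cala')_{<A}} F|_{I(\cala')_{<A}}} \colim_{I(\cala')} F|_{I(\cala')}.$$

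Next I would exhibit the analogous homotopy pushout on the hocolim side. The inclusion of posets $I(\cala') \hookrightarrow I(\cala)$ together with the fact that $A$ is a terminal element attached above $I(\cala')_{<A}$ expresses $I(\cala)$ as a pushout of posets, and the homotopy colimit functor converts this into a homotopy pushout of spaces. Condition (b) of Definition \ref{Def-arrangement}, combined with an inductive filtration of $I(\cala')_{<A}$ by attaching one element at a time in a convenient category of topological spaces, guarantees that the map $\colim_{I(\cala')_{<A}} F|_{I(\cala')_{<A}} \to A$ is a cofibration. Consequently, the strict pushout above coincides with the corresponding homotopy pushout.

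The proof concludes by applying the induction hypothesis to both $F|_{I(\cala')}$ and $F|_{I(\cala')_{<A}}$ (note that $\cala'$ and $\{B \in \cala' : B \subseteq A\}$ are themselves arrangements), yielding homotopy equivalences between hocolim and colim in each case, while the identity $A \to A$ is trivially a homotopy equivalence. The gluing lemma for homotopy pushouts then produces the desired homotopy equivalence between $\hocolim_{I(\cala)} F$ and $\colim_{I(\cala)} F$. The main technical obstacle I anticipate is verifying cofibrancy of the left vertical map in the strict pushout: although condition (b) provides cofibrations for each pair $B \subseteq B'$, upgrading this to cofibrancy of the inclusion of an iterated union of elements of $\cala$ into $A$ requires a careful filtration by the finitely many elements below $A$ and stability of cofibrations under the relevant pushouts, which may necessitate working in a compactly generated category of spaces.
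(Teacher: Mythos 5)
The paper does not prove this statement; it is cited verbatim as the ``Projection Lemma 1.6'' of Ziegler and \u{Z}ivaljevi\'c \cite{ZZ}, so there is no in-paper proof to compare your sketch against. On its own terms, your outline has a sensible shape: induct on $|\cala|$, peel off a maximal element $A$, decompose the colimit as the strict pushout of $A\leftarrow U_A\to\colim_{I(\cala')}F$ where $U_A=\bigcup_{B<A}B$, match it against a homotopy pushout decomposition of the homotopy colimit, and close with the gluing lemma and the induction hypothesis applied to the arrangements $\cala'$ and $\{B\in\cala':B\subseteq A\}$. But the claim you single out as a ``technical obstacle'' --- that $U_A\hookrightarrow A$ is a cofibration --- is not a deferrable detail; it is the actual content of the lemma. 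Condition (b) only yields cofibrations between single elements of $\cala$, and to promote this to cofibrancy of the inclusion of a union you must control the mutual overlaps $B_i\cap B_j$, which by condition (a) are again unions of arrangement elements. The induction you gesture at is therefore genuinely nested (in the number of elements below $A$, and within each step in the number of elements below the overlaps), and as written your argument is circular: the cofibrancy you need for the pushout at the top level is the same type of cofibrancy you would need for the pieces one level down. Equivalently, what must be shown is that $F$ is Reedy cofibrant with respect to the direct Reedy structure on $I(\cala)$, i.e.\ that every latching map $L_A F = U_A\to F(A)=A$ is a cofibration; your proposal correctly identifies the latching condition but does not establish it.

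A secondary point: the assertion that the pushout decomposition of $I(\cala)$ into $I(\cala)_{\le A}$ and $I(\cala')$ along the lower set $I(\cala')_{<A}$ is converted by $\hocolim$ into a homotopy pushout is also left unargued. It is true, because the inclusions of full downward-closed subposets induce cofibrations on the bar construction models of the homotopy colimits, but since the lemma you are proving is itself a decomposition tool you should either cite a precise statement or spell this out, rather than appeal to the ``homotopy colimit functor'' doing the right thing automatically. With the latching-map cofibrancy established by an explicit nested induction and the hocolim decomposition made precise, the remainder of your argument (gluing lemma plus the observation that $\cala'$ and $\{B\in\cala':B\subseteq A\}$ are again arrangements of smaller size) would go through.
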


The following lemma is immediate from the definitions.

\begin{Lem}
	\label{support}
	Let $\calp$ be a finite pointed poset and $(\bXbA)=\{(X_v,A_v)\;|\; v\in V_\calp\}$ be a collection of pairs of spaces. For an object $x\in\calp$ and a point $\mathbf{a}=(a_w)_{w\in V_\calp}\in\calz^\calp_{\bXbA}(x)$, define the \hadgesh{support} of $a$ by
$$\mathrm{supp}(\mathbf{a})=\{v\in V_\calp\,\vert\, a_v\notin A_v\}.$$
Then the following statements hold:
	\begin{enumerate}[\rm (a)]
		\item $\mathrm{supp}(\mathbf{a})\subset V(x)$, and
		\item for any $x\le y\in\calp$, $\mathrm{supp}(\calz^\calp_{\bXbA}(\iota_{x,y})(\mathbf{a}))=\mathrm{supp}(\mathbf{a})$.
	\end{enumerate}
\end{Lem}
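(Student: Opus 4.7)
The statement is essentially a direct unwinding of the definition of the functor $\calz^\calp_{\bXbA}$, so the plan is short and the main work is to make the two observations formally.

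For part (a), the plan is to take the contrapositive: if $v\in V_\calp\setminus V(x)$, then by Definition \ref{Def-PolyProdPoset}(1), the $v$-th factor of $\calz^\calp_{\bXbA}(x)$ is $A_v$, so the corresponding coordinate $a_v$ of any $\mathbf{a}\in\calz^\calp_{\bXbA}(x)$ automatically lies in $A_v$, forcing $v\notin\mathrm{supp}(\mathbf{a})$. This immediately gives the inclusion $\mathrm{supp}(\mathbf{a})\subset V(x)$.

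For part (b), the first step will be to record that $x\le y$ implies $V(x)\subseteq V(y)$: if $v\in V(x)$, then $v$ is minimal in $\calp$ with $v\le x\le y$, so $v\in V(y)$. With this in hand, the morphism $\calz^\calp_{\bXbA}(\iota_{x,y})$ as defined in Definition \ref{Def-PolyProdPoset}(1) is, coordinate-wise, the identity on the $v$-th factor whenever $v\in V(x)$ or $v\notin V(y)$, and the inclusion $A_v\hookrightarrow X_v$ when $v\in V(y)\setminus V(x)$. In every case the $v$-coordinate of $\calz^\calp_{\bXbA}(\iota_{x,y})(\mathbf{a})$ is the same underlying element of $X_v$ as $a_v$. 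Hence the condition ``$a_v\notin A_v$'' is preserved, and likewise ``$a_v\in A_v$'' is preserved, which yields the equality of supports.

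The only step that requires any thought is the observation $V(x)\subseteq V(y)$, and this follows immediately from the definition of a vertex as a minimal non-basepoint object below a given element. I do not expect any genuine obstacle here, since the lemma is essentially a bookkeeping statement about how the structure maps act on coordinates, and its purpose in the paper is to set up notation for the arrangement/projection-lemma argument to follow.
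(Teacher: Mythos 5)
Your argument is correct and is exactly the kind of definition-unwinding the paper has in mind: the paper gives no proof at all, stating only that the lemma ``is immediate from the definitions.'' Both your case analysis for (b) and the preliminary observation that $x\le y$ forces $V(x)\subseteq V(y)$ are the right (and essentially unique) way to make this precise.
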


\begin{Lem}
\label{Lem-homeo_into}
Let $\calp$ be a finite polyhedral poset and $(\bXbA)=\{(X_v,A_v)\;|\;v\in V_\calp\}$ be a collection of pairs of spaces. For each $x\in \calp$ let $Z_x$ denote the image of the natural map $j_x\colon\calz^\calp_{\bXbA}(x)\to\mathrm{colim}\,\calz^\calp_{\bXbA}$. Then the following statements hold:
\begin{enumerate}[\rm (a)]
\item If $\mathbf{a}\in\calz^\calp_{\bXbA}(x)$ and $\mathbf{b}\in\calz^\calp_{\bXbA}(y)$ are points such that $j_x(\mathbf{a}) = j_y(\mathbf{b})$, then there is an object $u\in\calp$ and $\mathbf{d}\in \calz^\calp_{\bXbA}(u)$, such that $u\le x,y$, and $\mathbf{a}=\calz^\calp_{\bXbA}(\iota_{u,x})(\mathbf{d})$ and $\mathbf{b}=\calz^\calp_{\bXbA}(\iota_{u,y})(\mathbf{d})$. \label{lower-bound-image}

\item For each object $x\in \calp$, the map $j_x$ is injective. \label{injective}
\end{enumerate}
\end{Lem}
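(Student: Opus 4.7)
The plan is to prove (a) by induction on the length of a zig-zag witnessing the equality $j_x(\mathbf{a}) = j_y(\mathbf{b})$, and then deduce (b) as an immediate consequence by specialising to $y = x$. Since $\colim_{\calp}\calz^\calp_{\bXbA}$ is by construction the quotient of $\coprod_{p \in \calp}\calz^\calp_{\bXbA}(p)$ by the equivalence relation generated by $\mathbf{c} \sim \calz^\calp_{\bXbA}(\iota_{p,q})(\mathbf{c})$ for each comparable pair $p \le q$, the condition $j_x(\mathbf{a}) = j_y(\mathbf{b})$ is equivalent to the existence of a finite sequence $\mathbf{a} = \mathbf{c}_0, \mathbf{c}_1, \ldots, \mathbf{c}_n = \mathbf{b}$ with $\mathbf{c}_i \in \calz^\calp_{\bXbA}(x_i)$, in which each consecutive pair is related by a single application of a functor arrow either upward or downward. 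The base cases $n \le 1$ are handled directly.

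For the inductive step I split on the direction of the first link. In the downward case $x_0 \ge x_1$ one has $\mathbf{c}_0 = \calz^\calp_{\bXbA}(\iota_{x_1,x_0})(\mathbf{c}_1)$, so the inductive hypothesis applied to the shorter zig-zag from $\mathbf{c}_1$ to $\mathbf{c}_n$ supplies some $u \le x_1$ and $u \le x_n$ together with $\mathbf{d} \in \calz^\calp_{\bXbA}(u)$ mapping to $\mathbf{c}_1$ and $\mathbf{c}_n$; functoriality along the chain $u \le x_1 \le x_0$ then shows that this same $u$ already witnesses the conclusion for the pair $\mathbf{c}_0, \mathbf{c}_n$. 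The nontrivial case is $x_0 \le x_1$. The inductive hypothesis again produces $u \le x_1$ and $u \le x_n$ together with $\mathbf{d} \in \calz^\calp_{\bXbA}(u)$, and by construction the images of $\mathbf{c}_0$ and $\mathbf{d}$ in $\calz^\calp_{\bXbA}(x_1)$ both equal $\mathbf{c}_1$. Since $u$ and $x_0$ share the upper bound $x_1$, Proposition \ref{polyhedral upper bound} produces the meet $u \wedge x_0$, and I set $u' = u \wedge x_0$, which is automatically $\le x_n$ via $u$.

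The hard part, and essentially the entire content of the argument, is then to produce the required lift $\mathbf{d}' \in \calz^\calp_{\bXbA}(u \wedge x_0)$ that simultaneously restricts to $\mathbf{c}_0$ and to $\mathbf{d}$. The key technical input is the identity $V(u \wedge x_0) = V(u) \cap V(x_0)$, which is established within the proof of Proposition \ref{polyhedral lower saturated}. Because every functor arrow $\calz^\calp_{\bXbA}(\iota_{p,q})$ is a coordinate-wise inclusion of products, two tuples with the same image in $\calz^\calp_{\bXbA}(x_1)$ must agree coordinate by coordinate; inspecting each $v \in V_\calp$ in turn shows that the common value lies in $A_v$ whenever $v \notin V(u) \cap V(x_0)$, which is precisely the condition needed to define the lift $\mathbf{d}'$ into $\calz^\calp_{\bXbA}(u \wedge x_0)$. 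This completes the induction and hence part (a). Part (b) then follows at once by applying (a) with $y = x$: the resulting $\mathbf{d} \in \calz^\calp_{\bXbA}(u)$ is sent by the single map $\calz^\calp_{\bXbA}(\iota_{u,x})$ to both $\mathbf{a}$ and $\mathbf{b}$, forcing $\mathbf{a} = \mathbf{b}$.
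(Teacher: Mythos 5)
Your argument is correct and uses the same key ingredients as the paper's proof: the existence of the meet $u\wedge x_0$ for objects with a common upper bound (Proposition~\ref{polyhedral upper bound}), and the identity $V(u\wedge x_0)=V(u)\cap V(x_0)$, which is what lower saturation provides via Proposition~\ref{polyhedral lower saturated}. Where the paper forms a single iterated meet $z=y_0\wedge\cdots\wedge y_k$ of all lower corners of the zig-zag and invokes Lemma~\ref{support} to conclude $\mathrm{supp}(\mathbf{a})\subseteq V(z)$, you instead induct on the length of the zig-zag and carry out a binary meet plus a coordinate-wise lift at each step; this is essentially the paper's construction unrolled, and your by-hand coordinate check is exactly what Lemma~\ref{support} packages.
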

\begin{proof}
Part (\ref{injective}) follows from (\ref{lower-bound-image}) upon setting $x=y$. Thus it suffices to prove  (\ref{lower-bound-image}).

Assume that $\mathbf{a}\in\calz^\calp_{\bXbA}(x)$ and $\mathbf{b}\in\calz^\calp_{\bXbA}(y)$ are points such that $j_x(\mathbf{a}) = j_y(\mathbf{b})$. Then there is a zig-zag of morphisms in $\calp$,
\[\xymatrix{
& x_1 && x_2 & \cdots &\cdots & \cdots & x_k\\
y_0=x\ar[ur] && y_1\ar[ul]\ar[ur] && y_2\ar[ul] & \cdots & y_{k-1}\ar[ur] && y_k=y\ar[ul]
}\]
Such that upon applying the functor $\calz^\calp_{\bXbA}$ and mapping the resulting diagram to the colimit via the maps $j_{(-)}$, we obtain  a corresponding zig-zag of elements and morphisms connecting them, with $j_x(\mathbf{a})$ and $j_y(\mathbf{b})$
at the corresponding ends of the diagram. Hence there exists a meet $y_0\wedge y_1$, and by induction one obtains a meet $z=y_0\wedge\cdots\wedge y_k$. By Lemma \ref{support},
\[\mathrm{supp}(\mathbf{a})=\mathrm{supp}(\mathbf{b})\subset V(z)=\bigcap_{i=0}^kV(y_i).\]
Hence there is $\mathbf{c}\in\Z_{\bXbA}^\calp(z)$ such that $\mathbf{a}=\calz^\calp_{\bXbA}(\iota_{z,x})(\mathbf{c})$ and $\mathbf{b}=\calz^\calp_{\bXbA}(\iota_{z,y})(\mathbf{c})$, completing the proof.

\end{proof}

Recall that a poset $\calp$ is reduced if $x\le y\in\calp$ and $V(x)=V(y)$ imply $x=y$.

\begin{Lem}
	\label{Lem-arrangement}
	Let $\calp$ be a finite reduced polyhedral poset and $(\bXbA)=\{(X_v,A_v)\;|\;v\in V_\calp\}$ be a collection of NDR-pairs of spaces. Assume that $X_v\ne A_v$ for all $v\in V_\calp$.
Then the collection $\{Z_x\}_{x\in\calp}$ is an arrangement (see Definition \ref{Def-arrangement}), whose intersection poset $I(\{Z_x\}_{x\in\calp})$ is isomorphic to $\calp$.
\end{Lem}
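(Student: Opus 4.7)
My plan is to verify the two arrangement axioms of Definition \ref{Def-arrangement} separately, and then establish the poset isomorphism, using Lemma \ref{Lem-homeo_into} as the main structural input throughout.

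For arrangement condition (a), I will show that for any $x, y \in \calp$,
\[Z_x \cap Z_y \;=\; \bigcup_{\substack{u \in \calp \\ u \le x,\, u \le y}} Z_u,\]
which is manifestly a union of elements of the collection $\{Z_z\}_{z \in \calp}$. The inclusion $\supseteq$ is immediate from functoriality of $\calz^\calp_{\bXbA}$. For $\subseteq$, any point in the intersection can be written as $j_x(\mathbf{a}) = j_y(\mathbf{b})$, and Lemma \ref{Lem-homeo_into}(a) produces some $u \in \calp$ with $u \le x, y$ together with $\mathbf{d} \in \calz^\calp_{\bXbA}(u)$ whose image $j_u(\mathbf{d})$ is that point.

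For arrangement condition (b), when $x \le y$ the map $\calz^\calp_{\bXbA}(\iota_{x,y}) \colon \calz^\calp_{\bXbA}(x) \hookrightarrow \calz^\calp_{\bXbA}(y)$ is a finite product of identities with NDR-pair inclusions $A_v \hookrightarrow X_v$; since NDR-pairs are closed under finite products, this is a cofibration. To transfer this to an inclusion $Z_x \hookrightarrow Z_y$ inside the colimit, I need $j_x$ to be a closed embedding. Lemma \ref{Lem-homeo_into}(b) supplies injectivity, and the NDR-pair structure will be used to build neighborhood deformation retracts witnessing that $j_x$ is an embedding. This will be the main technical obstacle: ensuring that the colimit topology is well behaved enough for each $j_x$ to be a homeomorphism onto $Z_x$, so that the product cofibration descends to the subspace inclusion $Z_x \hookrightarrow Z_y$.

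For the isomorphism $\calp \cong I(\{Z_x\}_{x \in \calp})$, the map $x \mapsto Z_x$ is tautologically surjective and order-preserving, so the content is order-reflection: $Z_x \subseteq Z_y \Rightarrow x \le y$. Here is where the hypotheses that $\calp$ is reduced and $X_v \ne A_v$ enter. I will pick a ``full support'' point $\mathbf{a} = (a_w)_{w \in V_\calp} \in \calz^\calp_{\bXbA}(x)$ with $a_v \in X_v \sminus A_v$ for $v \in V(x)$ and $a_v \in A_v$ otherwise, so that $\mathrm{supp}(\mathbf{a}) = V(x)$. Since $j_x(\mathbf{a}) \in Z_y$, Lemma \ref{Lem-homeo_into}(a) yields $u \le x, y$ and $\mathbf{d} \in \calz^\calp_{\bXbA}(u)$ with $\mathbf{a} = \calz^\calp_{\bXbA}(\iota_{u,x})(\mathbf{d})$. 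Then Lemma \ref{support} forces $\mathrm{supp}(\mathbf{d}) = \mathrm{supp}(\mathbf{a}) = V(x)$, and combining with $\mathrm{supp}(\mathbf{d}) \subseteq V(u) \subseteq V(x)$ (the right inclusion since $u \le x$) gives $V(u) = V(x)$. The reduced hypothesis on $\calp$ then forces $u = x$, whence $x = u \le y$ as desired.
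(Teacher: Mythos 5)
Your proposal is correct and follows the same overall strategy as the paper, but there are two places worth commenting on.

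For the isomorphism $\calp \cong I(\{Z_x\}_{x\in\calp})$, your ``full support'' argument is the right idea, and in fact it is more explicit than what the paper records: the paper reduces the claim to showing that all the $Z_x$ are distinct, appealing to injectivity of $j_x$, the hypothesis $X_v\ne A_v$, and reducedness of $\calp$, and then asserts the isomorphism of categories. To actually conclude an isomorphism of posets one needs order-reflection ($Z_x\subseteq Z_y\Rightarrow x\le y$), not merely injectivity of $x\mapsto Z_x$, and your argument --- pick $\mathbf{a}\in\calz^\calp_{\bXbA}(x)$ with $\mathrm{supp}(\mathbf{a})=V(x)$, apply Lemma \ref{Lem-homeo_into}(\ref{lower-bound-image}), then use Lemma \ref{support} together with reducedness to force $u=x$ --- supplies exactly this. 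So on this point you have filled in a step the paper leaves implicit. (Note also that the same argument is really what underlies the paper's distinctness claim, so the two routes converge.)

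For condition (\ref{arrangement-b}) of Definition \ref{Def-arrangement}, you correctly identify the subtle point: Lemma \ref{Lem-homeo_into}(\ref{injective}) only gives that $j_x$ is a continuous bijection onto $Z_x$, so to transport the cofibration $\calz^\calp_{\bXbA}(\iota_{x,y})$ to the inclusion $Z_x\hookrightarrow Z_y$ one needs $j_x$ to be a homeomorphism onto its image. You flag this as ``the main technical obstacle'' but do not close it; the paper itself does not address it at all, stating only that the NDR hypothesis implies (\ref{arrangement-b}). The standard way to close the gap is to observe that for a finite diagram in which every bond is a closed cofibration and the colimit is taken levelwise along the lattice of supports, each $j_x$ is a closed embedding; since $\calz^\calp_{\bXbA}$ factors through $x\mapsto V(x)$ into the face poset of a simplex, one can also identify $\colim_\calp\calz^\calp_{\bXbA}$ with a quotient of the subspace $\bigcup_x\calz^\calp_{\bXbA}(x)\subseteq\prod_v X_v$ by the relation from Lemma \ref{Lem-homeo_into}(\ref{lower-bound-image}), from which closedness of $j_x$ follows. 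Either way, this is a detail the reader has to supply in both your write-up and the paper's, so your honesty in flagging it is appropriate, but for a complete proof you should finish that step rather than leave it as a promissory note.

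One final minor remark: when you verify (\ref{arrangement-b}), you implicitly restrict to pairs $x\le y$; strictly speaking, the condition asks about arbitrary pairs with $Z_x\subset Z_y$, so your verification of order-reflection should precede (or be invoked inside) the verification of (\ref{arrangement-b}) to justify reducing to the case $x\le y$.
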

\begin{proof}
Let $x,y\in \calp$ be any  objects, and let $Z_x, Z_y$ be the corresponding images, as in Lemma \ref{Lem-homeo_into}. If $Z_x\cap Z_y\nempty$, then there are points  $\mathbf{a}\in\calz^\calp_{\bXbA}(x)$ and $\mathbf{b}\in\calz^\calp_{\bXbA}(y)$ such that $j_x(\mathbf{a}) = j_y(\mathbf{b})$. Hence by Lemma  \ref{Lem-homeo_into}(\ref{lower-bound-image}) there is an object $u\in\calp$ and $\mathbf{d}\in \calz^\calp_{\bXbA}(u)$, such that $\mathbf{a}=\calz^\calp_{\bXbA}(\iota_{u,x})(\mathbf{d})$ and $\mathbf{b}=\calz^\calp_{\bXbA}(\iota_{u,y})(\mathbf{d})$. Thus
	\begin{equation}
		\label{Z_x}
		Z_x\cap Z_y=\bigcup_{w\le x,y}Z_w,
	\end{equation}
	which is Condition (\ref{arrangement-a})  in Definition \ref{Def-arrangement}.

Since we are assuming that all $(X_v,A_v)$ are NDR-pairs, Condition (\ref{arrangement-b})  in Definition \ref{Def-arrangement} is also satisfied.  This shows that the collection $\{Z_x\}_{x\in\calp}$ is an arrangement.

Finally, we show that the intersection poset of the collection is isomorphic as a category to $\calp$. To do so we must show that all $Z_x$ are distinct subspaces in the colimit. By Lemma \ref{Lem-homeo_into}(\ref{injective}), for each object $x\in\calp$, the map $j_x$ is injective, and by assumption $X_v\neq A_v$ for all $v\in V_\calp$. Furthermore, by assumption $\calp$ is reduced, and so comparable objects cannot have the same vertex set. Hence all $Z_x$ are distinct and the natural functor  $J\colon\calp\to I(\{Z_x\}_{x\in\calp})$, which takes $x$ to $Z_x$ is an isomorphism of categories.
\end{proof}

\begin{Prop}
	\label{hocolim-colim}
	Let $\calp$ be a finite reduced polyhedral poset and $(\bXbA)=\{(X_v,A_v)\;|\;v\in V_\calp\}$ be a collection of NDR-pairs of spaces. If $X_v\ne A_v$ for all $v\in V_\calp$, then the natural map
	$$\hocolim_\calp\,\calz^\calp_{\bXbA}\to\colim_\calp\,\calz^\calp_{\bXbA}$$
	is a homotopy equivalence.
\end{Prop}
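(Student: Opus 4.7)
The plan is to reduce the statement to the Ziegler--\u{Z}ivaljevi\'c projection lemma (Lemma \ref{projection lem}) via the arrangement produced in Lemma \ref{Lem-arrangement}. Write $\cala = \{Z_x\}_{x\in\calp}$ for the collection of images in $\colim_\calp \calz^\calp_{\bXbA}$, and let $I(\cala)$ be its intersection poset. Lemma \ref{Lem-arrangement} tells us that $\cala$ is an arrangement and that the assignment $x\mapsto Z_x$ defines an isomorphism of posets $J\colon \calp \xto{\cong} I(\cala)$.

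First, I would upgrade Lemma \ref{Lem-homeo_into}(\ref{injective}) to the statement that each $j_x\colon \calz^\calp_{\bXbA}(x)\to Z_x$ is a homeomorphism. Surjectivity is by definition of $Z_x$, and injectivity is exactly Lemma \ref{Lem-homeo_into}(\ref{injective}). For continuity of the inverse one uses that $\calz^\calp_{\bXbA}(x) = \prod_{v\in V_\calp} Y_v$ is a product of NDR-pairs, and the induced map factors through the colimit via a cofibrant inclusion of a closed subspace, so $j_x$ is a closed embedding. This gives a natural homeomorphism of functors $\calz^\calp_{\bXbA} \cong F\circ J$, where $F\colon I(\cala)\to\Top$ is the tautological inclusion functor $Z_x\mapsto Z_x$ used in Lemma \ref{projection lem}.

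Next, I would invoke naturality of the comparison map between homotopy colimit and colimit along $J$. Since $J$ is an isomorphism of small categories and $\calz^\calp_{\bXbA} \cong F\circ J$ via a natural homeomorphism, we obtain a commutative diagram
\[
\xymatrix{
\hocolim_\calp \calz^\calp_{\bXbA} \ar[r]^-{\simeq} \ar[d] & \hocolim_{I(\cala)} F \ar[d] \\
\colim_\calp \calz^\calp_{\bXbA} \ar[r]^-{\cong} & \colim_{I(\cala)} F
}
\]
in which the horizontal arrows are homeomorphisms (for the colimits) and a homotopy equivalence (for the homotopy colimits). Lemma \ref{projection lem} applied to $\cala$ shows that the right-hand vertical map is a homotopy equivalence, so the left-hand vertical map, which is the map in the statement, is a homotopy equivalence as well.

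The main technical obstacle is the verification that $j_x$ is a homeomorphism onto $Z_x$, which is what lets us transfer the arrangement conclusion back to the original functor $\calz^\calp_{\bXbA}$. This in turn relies on the NDR hypothesis, the hypothesis $X_v \ne A_v$ (needed so that distinct objects of $\calp$ give distinct images $Z_x$, via reducedness of $\calp$), and the explicit description in Lemma \ref{Lem-homeo_into}(\ref{lower-bound-image}) of when two points in $\calz^\calp_{\bXbA}(x)$ and $\calz^\calp_{\bXbA}(y)$ become identified in the colimit. Once this identification is in place, the rest is a formal consequence of Lemma \ref{projection lem}.
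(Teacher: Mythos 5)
Your argument is essentially identical to the paper's: both reduce to Lemma \ref{Lem-arrangement} together with the Ziegler--\u{Z}ivaljevi\'c Projection Lemma \ref{projection lem}, via the natural transformation $j\colon\calz^\calp_{\bXbA}\to F\circ J$ and the category isomorphism $J\colon\calp\to I(\cala)$. One small point in your favor: you explicitly observe that each $j_x$ should be a \emph{homeomorphism} onto $Z_x$ (so the bottom horizontal map between colimits is a homeomorphism, not merely induced by levelwise homotopy equivalences), whereas the paper only records that $j_x$ is a continuous bijection and calls it an ``equivalence''; your version is the more careful one, since a levelwise homotopy equivalence alone would not directly yield an equivalence on strict colimits.
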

\begin{proof}
By Lemma \ref{Lem-arrangement}, under our assumptions, the functor $J\colon\calp\to I(\{Z_x\}_{x\in\calp})$ is an isomorphism of categories. Let
\[F\colon  I(\{Z_x\}_{x\in\calp})\to \Top\]
be the functor which takes object $Z_x$ to the underlying topological space.  Then there is a natural transformation
$j\colon \calz_{\bXbA}^\calp\to F\circ J$ that takes an object $x\in\calp$ to the continuous bijection $j_x\colon\calz_{\bXbA}^\calp(x)\to Z_x$ (see Lemma \ref{Lem-homeo_into}(\ref{injective})). By  naturality of the map from the homotopy colimit to the colimit, one has the following commutative diagram:

\[\xymatrix{
\hocolim_\calp\calz_{\bXbA}^\calp\ar[r]_\simeq^{j_*}\ar[d] & \hocolim_\calp F\circ J\ar[r]_\simeq^{J^*}\ar[d] & \hocolim_{I(\{Z_x\}_{x\in\calp})} F\ar[d]^\simeq\\
\colim_\calp\calz_{\bXbA}^\calp\ar[r]^{j_*}_\simeq &\colim_\calp F\circ J\ar[r]^{J^*} & \colim_{I(\{Z_x\}_{x\in\calp})}
}\]
The maps $j_*$ in the top and bottom  rows are homotopy equivalences because $j_x$ is an equivalence for each object $x\in\calp$. The maps $J^*$ are homotopy equivalences because $J$ is an equivalence of categories by Lemma \ref{Lem-arrangement}.
The right vertical map is a homotopy equivalence by the Projection Lemma  \ref{projection lem}. Thus the left vertical map is a homotopy equivalence as claimed.
\end{proof}



\section{The simplicial transform}
\label{Sec-Simplicial_Transform}

In this section we construct for any finite pointed poset $\calp$, a simplicial poset $s(\calp)$ with the same vertex set $V_\calp$, and a map of posets  $\calp\to s(\calp)$ that is an embedding if $\calp$ is reduced and  polyhedral. The main theorem of the section is the statement that for any finite reduced polyhedral poset $\calp$ and any collection of pairs $(\bXbA)$ indexed by the $V_\calp$, the polyhedral product of $(\bXbA)$ over $\calp$ coincides with its polyhedral product over $s(\calp)$.

\begin{Defi}\label{Def-Simplicial_transform}
Let $\calp$ be a pointed finite poset. Define an equivalence relation on the set
\[\{(x,S)\in \obj(\calp)\times 2^{V_\calp}\,\vert\,x\in\calp,\;S\subseteq V(x)\}\]
to be the transitive closure of the relation $(x,S)\sim(y,T)$, if $S=T$ and $[x\vee y]\nempty$. Let  $s(\calp)$ denote the set of equivalence classes of this relation, and define a poset structure on $s(\calp)$ by setting $[x,S]\le[y,T]$ if $S\subset T$ and $[x\vee y]\nempty$. We refer to the poset $s(\calp)$ as the \hadgesh{simplicial transform of $\calp$}.
\end{Defi}

\begin{Rma}
\label{Ref-Obj_rep}
Notice that if $x\le y\in\calp$ and $S\subseteq V(x)$, then $[x,S]=[y,S]$. Thus every object $[x,S]\in s(\calp)$ can be represented by $[y,S]$, where $y$ is any maximal  object in $\calp$ such that $y\geq x$. On the other hand, if $y', y''\in\calp$ are two maximal objects such that $S\subseteq V(y'), V(y'')$, but there is no lower bound $x\in[y'\wedge y'']$ such that $S\subseteq V(x)$, then $[y', S]$ and $[y'', S]$ are distinct objects of $s(\calp)$.
\end{Rma}

A justification for the term ``simplicial transform'' is given in the following lemma.
\begin{Lem}
	\label{s(P) simplicial}
	For any finite pointed poset $\calp$, its simplicial transform $s(\calp)$ is a simplicial poset with the same vertex set as $\calp$.
\end{Lem}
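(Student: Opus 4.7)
The plan is to establish two things: that $s(\calp)$ has $V_\calp$ as its vertex set, and that for every class $[x,S]\in s(\calp)$ the down-set $s(\calp)_{\le[x,S]}$ is a boolean algebra, which is the defining property of a simplicial poset.

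First I would identify the basepoint and the vertices. For any $x\in\calp$, $(*,\emptyset)\sim(x,\emptyset)$ since $[*\vee x]=\{x\}\neq\emptyset$, so all pairs with empty second coordinate collapse to a single class $[*,\emptyset]$, which serves as the initial object of $s(\calp)$. For each $v\in V_\calp$, the pair $(v,\{v\})$ yields a class $[v,\{v\}]$ that is strictly above the basepoint (since $\{v\}\neq\emptyset$), and these classes are pairwise distinct because distinct minimal objects of $\calp$ never share a common upper bound with matching singleton vertex sets. The assignment $v\mapsto[v,\{v\}]$ then gives a bijection between $V_\calp$ and the minimal non-basepoint classes of $s(\calp)$; conversely, any class strictly above the basepoint dominates some such $[v,\{v\}]$, so these are exactly the vertices of $s(\calp)$.

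Next I would verify that the order on $s(\calp)$ is well-defined on equivalence classes and then exhibit the isomorphism $s(\calp)_{\le[x,S]}\cong 2^S$. Using Remark \ref{Ref-Obj_rep}, the relation $[x,S]\le[y,T]$ is equivalent to the existence of a single $z\in\calp$ with $T\subseteq V(z)$ such that $[x,S]=[z,S]$ and $[y,T]=[z,T]$, which makes independence of representatives transparent; reflexivity and transitivity are immediate from this formulation, and antisymmetry reduces to the observation that $[x,S]\le[y,T]$ and $[y,T]\le[x,S]$ force $S=T$, after which both classes admit the same representative. Define
\[
\varphi\colon s(\calp)_{\le[x,S]}\longrightarrow 2^S,\qquad[y,T]\longmapsto T.
\]
This is well-defined (all representatives of a class share the second coordinate) and manifestly order-preserving. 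For surjectivity, any $T\subseteq S$ is the image of $[x,T]$, which lies below $[x,S]$ since $[x\vee x]=\{x\}\neq\emptyset$. For injectivity and strict order-reflection, note that if $[y_1,T_1],[y_2,T_2]\le[x,S]$, then by the characterisation above both admit representatives at $x$, namely $(x,T_1)$ and $(x,T_2)$; hence $T_1\subseteq T_2$ if and only if $[x,T_1]\le[x,T_2]$, and $T_1=T_2$ forces $[y_1,T_1]=[y_2,T_2]$.

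Since $2^S$ is precisely the face poset of a simplex on the vertex set $S$, every principal down-set in $s(\calp)$ is a boolean algebra, so $s(\calp)$ is simplicial by Definition, and its vertex set is $V_\calp$ via the bijection established above. The main obstacle is the bookkeeping around well-definedness of $\le$ on equivalence classes, since the transitive closure in Definition \ref{Def-Simplicial_transform} can identify pairs $(x,S)$ and $(y,S)$ whose $\calp$-representatives themselves fail to admit a common upper bound; the reformulation in terms of a shared representative $z$ is what makes both well-definedness and the isomorphism with $2^S$ fall out cleanly.
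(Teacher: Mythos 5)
Your proof is correct and follows the paper's approach: both establish the boolean-algebra structure of $s(\calp)_{\le[x,S]}$ by showing that every object below $[x,S]$ has a representative of the form $(x,T)$ with $T\subseteq S$, yielding an isomorphism with $2^S$. Your write-up is more detailed than the paper's one-line proof---you explicitly verify that $\le$ is well-defined on equivalence classes via the common-representative characterisation, and you spell out why the minimal non-basepoint classes are exactly the $[v,\{v\}]$ for $v\in V_\calp$, both of which the paper treats as immediate (``the second statement is obvious'').
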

\begin{proof}
By Definition \ref{Def-Simplicial_transform},  for each object $[x,S]\in s(\calp)$, the sub-poset $s(\calp)_{\le[x,S]}$ consists of all objects $[y,T]$, such that $T\subset S$ and $[x\vee y]\nempty$. Thus in particular for each $[y,T]\in s(\calp)_{\le[x,S]}$, $[y,T] = [x,T]$, so  $s(\calp)_{\le[x,S]}$ is isomorphic to the boolean algebra $2^S$. This shows that $s(\calp)$ is a simplicial poset. The second statement is obvious.
\end{proof}

For a finite pointed poset $\calp$, one has a poset map
\begin{equation}
s\colon\calp\to s(\calp),\quad x\mapsto[x,V(x)].\label{Eq-Simplicial_embeding}
\end{equation}

\begin{Prop}
\label{Prop-Simp_emb}
	If $\calp$ is a finite reduced polyhedral poset, then the map $s\colon\calp\to s(\calp)$ is an embedding of $\calp$ in $s(\calp)$, with image the sub-poset of $s(\calp)$ generated by all objects $[x, V(x)]$, $x\in\calp$.
\end{Prop}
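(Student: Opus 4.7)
The plan is to verify three things: that $s$ is injective on objects, that $s$ is order-reflecting (so that combined with being a poset map it is an embedding of posets), and that the image is exactly the full sub-poset on the objects $[x,V(x)]$. The key algebraic ingredient I will exploit throughout is that for any two objects $x,y$ in a polyhedral poset with $[x\vee y]\neq\emptyset$, the meet $x\wedge y$ exists (Proposition \ref{polyhedral upper bound}) and satisfies $V(x\wedge y)=V(x)\cap V(y)$ (Proposition \ref{polyhedral lower saturated}); combined with the reduced hypothesis, this forces various candidate meets to collapse onto one of their upper bounds.

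First I will prove order-reflection, since it is the cleanest application of the setup. Suppose $s(x)\le s(y)$, i.e.\ $[x,V(x)]\le[y,V(y)]$ in $s(\calp)$. By the definition of the order on $s(\calp)$, this means $V(x)\subseteq V(y)$ and $[x\vee y]\ne\emptyset$. Then $x\wedge y$ exists and $V(x\wedge y)=V(x)\cap V(y)=V(x)$. Since $x\wedge y\le x$ and the two objects share the same vertex set, the reduced hypothesis on $\calp$ forces $x\wedge y=x$, hence $x\le y$, as required.

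Next I will prove injectivity. Suppose $s(x)=s(y)$. Unwinding the transitive closure defining the equivalence relation on $s(\calp)$, there is a chain $x=z_0,z_1,\ldots,z_n=y$ of objects of $\calp$ with $V(z_i)=V(x)$ for all $i$ and $[z_i\vee z_{i+1}]\ne\emptyset$ for each $i$. I claim each consecutive pair is already equal, which by induction yields $x=y$. Indeed, for each $i$ the meet $z_i\wedge z_{i+1}$ exists by Proposition \ref{polyhedral upper bound}, and has vertex set $V(z_i)\cap V(z_{i+1})=V(x)=V(z_i)$ by Proposition \ref{polyhedral lower saturated}. Since $z_i\wedge z_{i+1}\le z_i$ and both objects share the same vertex set, reducedness gives $z_i\wedge z_{i+1}=z_i$; symmetrically $z_i\wedge z_{i+1}=z_{i+1}$, hence $z_i=z_{i+1}$.

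Finally, the image of $s$ is by definition the collection $\{[x,V(x)]\mid x\in\calp\}$, and the previous two steps show that $s$ is a poset isomorphism between $\calp$ and this image equipped with the order inherited from $s(\calp)$. Thus the image is precisely the full sub-poset of $s(\calp)$ on these objects, completing the proof. The one point that requires a bit of care is the transitive closure in the definition of the equivalence relation on $s(\calp)$, but the reduced plus polyhedral hypotheses collapse each step of the chain, so no real obstacle arises.
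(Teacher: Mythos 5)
Your approach is essentially the paper's: exploit that in a reduced polyhedral poset, if $x$ and $y$ have a common upper bound and $V(x)\subseteq V(y)$, then the meet $x\wedge y$ exists with vertex set $V(x)\cap V(y)=V(x)$, and reducedness forces $x\wedge y=x$, hence $x\le y$. You execute this correctly for injectivity, unwinding the transitive closure of the equivalence relation into a chain and collapsing each step.

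However, your order-reflection step has a gap. You read $[x,V(x)]\le[y,V(y)]$ as asserting directly that $V(x)\subseteq V(y)$ and $[x\vee y]\neq\emptyset$. But the relation in Definition \ref{Def-Simplicial_transform} is phrased on equivalence classes, so the representatives realising a basic $\le$-step need not be $x$ and $y$ themselves, and the basic relation is not obviously transitive; the order on $s(\calp)$ must therefore be understood as the transitive closure. The paper accordingly unwinds $[x,V(x)]\le[y,V(y)]$ into a zig-zag $x\le z_1\ge x_1\le\cdots\le z_n\ge x_n=y$ together with a chain of vertex sets $V(x)\subseteq S_1\subseteq\cdots\subseteq S_n=V(y)$ with $S_i\subseteq V(x_i)$, and then applies exactly your one-step meet argument inductively to obtain $x\le x_1\le\cdots\le x_n=y$. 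Your step is the correct inductive step, but it needs to be iterated along the zig-zag, just as you already do in the injectivity argument. Your closing remark flags the transitive-closure subtlety for the equivalence relation but does not extend it to the ordering, which is precisely where it matters here.
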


\begin{proof}
	Let $x,y\in\calp$ be any objects, and suppose
	\[[x, V(x)] = s(x) =s(y) = [y,V(y)].\]
	 Then there is a zig-zag $x\le z_1\ge x_1\le\cdots\le z_n\ge x_n=y$ such that $V(x)=V(x_1)=\cdots=V(x_n)=V(y)$. Since $\calp$ is a polyhedral poset, there is $x\wedge x_1$ such that $V(x\wedge x_1)=V(x)\cap V(x_1)=V(x)=V(x_1)$. Then since $\calp$ is reduced, $x=x_1=x\wedge x_1$. Thus by induction, one gets $x=y$, and so $s$ is injective.

	Suppose that $[x,V(x)]\le[y,V(y)]$ for $x,y\in\calp$. Then there are zig-zags $x\le z_1\ge x_1\le\cdots\le z_n\ge x_n=y$ and $V(x)\subset S_1\subset\cdots\subset S_n=V(y)$, where $S_i\subset V(x_i)$ for all $i$. Since $\calp$ is a polyhedral poset, the meet $x\wedge x_1$ exists, and  $V(x\wedge x_1)=V(x)\cap V(x_1)=V(x)$, and since $\calp$ is reduced, it follows that  $x=x\wedge x_1\le x_1$. By induction, $x\le x_n=y$, and so $s$ is an embedding as stated.
\end{proof}

Let $\calp$ be a finite pointed poset and $x\in\calp$ be a maximal element. Let $\calp_{\setminus x}\subseteq\calp$ denote the sub-poset on all objects but $x$. Then $\calp = \calp_{\setminus x}\cup \calp_{\le x}$, with $ \calp_{\setminus x}\cap \calp_{\le x} = \calp_{< x}$. Hence the following is a pushout diagram in the category of posets and poset maps:
\begin{equation}
	\label{pushout P}
	\xymatrix{\calp_{<x}\ar[r]\ar[d]&\calp_{\le x}\ar[d]\\
	\calp_{\setminus x}\ar[r]&\calp.}
\end{equation}
Next, we observe that this relation is preserved under simplicial transforms.

\begin{Lem}
	Let $\calp$ be a finite pointed poset and $x\in\calp$ be a maximal element. Then there is a pushout diagram
	\begin{equation}
		\label{pushout SP}
		\xymatrix{s(\calp_{<x})\ar[r]\ar[d]&s(\calp_{\le x})\ar[d]\\
		s(\calp_{\setminus x})\ar[r]&s(\calp).}
	\end{equation}
\end{Lem}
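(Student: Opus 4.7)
My plan is to verify directly that the commutative square \eqref{pushout SP}, obtained by applying the functor $s$ to the pushout square \eqref{pushout P}, satisfies the universal property of a pushout in the category of posets. Write $Q$ for the pushout of $s(\calp_{\setminus x}) \leftarrow s(\calp_{<x}) \rightarrow s(\calp_{\le x})$; the universal property of $Q$ yields a canonical poset map $\varphi\colon Q\to s(\calp)$, and I plan to show that $\varphi$ is an isomorphism by establishing bijectivity on objects and the order-isomorphism property.

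Surjectivity on objects is straightforward: any $[y, S]\in s(\calp)$ admits a representative $(y, S)$ with $y\in \calp$, and since $\calp = \calp_{\setminus x}\cup \calp_{\le x}$, either $y\ne x$ (so $[y, S]$ is the image of the corresponding class in $s(\calp_{\setminus x})$), or $y = x$ (so it is the image of $[x, S]\in s(\calp_{\le x})$). Note also that by Remark \ref{Ref-Obj_rep}, every class in $s(\calp_{\le x})$ with subset $S$ is represented by $[x, S]$, since $x$ is a top element of $\calp_{\le x}$.

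The heart of the argument is injectivity on objects. Suppose $\varphi(a) = \varphi(b)$ in $s(\calp)$, where $a, b\in Q$ lift to representatives $(u_1, S)$ and $(u_2, S)$ in $s(\calp_{\setminus x})$ or $s(\calp_{\le x})$. By Definition \ref{Def-Simplicial_transform} there is a zigzag $u_1 = z_0, z_1, \ldots, z_n = u_2$ in $\calp$ with $S\subseteq V(z_i)$ and $[z_i\vee z_{i+1}]\nempty$ in $\calp$ for each $i$. I would set
\[q_i \defeq \begin{cases}[z_i, S]\in s(\calp_{\setminus x}) & \text{if } z_i\ne x,\\ [x, S]\in s(\calp_{\le x}) & \text{if } z_i = x,\end{cases}\]
viewed as elements of $Q$, and prove $q_i = q_{i+1}$ in $Q$ for each $i$. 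The case analysis hinges on the maximality of $x$: if some common upper bound $w$ of $z_i$ and $z_{i+1}$ in $\calp$ satisfies $w\ne x$, then necessarily $z_i, z_{i+1}\ne x$ (else $x\le w$ would contradict maximality), and the equality $q_i = q_{i+1}$ is witnessed inside $s(\calp_{\setminus x})$ using $w$; otherwise every common upper bound equals $x$, so $z_i, z_{i+1}\le x$, any representative strictly less than $x$ lies in $\calp_{<x}$, and the pushout identification along $s(\calp_{<x})$ glues $[z_i, S]\in s(\calp_{\setminus x})$ to $[x, S]\in s(\calp_{\le x})$, and similarly for $z_{i+1}$. Chaining these identities yields $a = q_0 = q_n = b$ in $Q$.

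For the order relations a parallel analysis handles inequalities: a relation $[y_1, S_1]\le [y_2, S_2]$ in $s(\calp)$ is witnessed by $S_1\subseteq S_2$ and some $w\in[y_1\vee y_2]$ in $\calp$; if $w\ne x$ the relation is realized in $s(\calp_{\setminus x})$, while if $w = x$ one works in $s(\calp_{\le x})$ and bridges through $s(\calp_{<x})$ whenever either $y_j$ is strictly less than $x$. The main obstacle I anticipate is the careful bookkeeping in the chain argument, namely verifying that every identification present in $s(\calp)$ is indeed achievable by finitely many moves in the two components of $Q$ combined with the gluing along $s(\calp_{<x})$. Maximality of $x$ is essential at every step where $x$ appears as an intermediate upper bound, since it forces the relevant predecessors and successors into $\calp_{\le x}$ and thus into the range where the gluing is available.
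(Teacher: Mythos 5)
Your proof is correct, and it takes essentially the same route as the paper---decompose $s(\calp)$ into the two pieces $s(\calp_{\setminus x})$ and $s(\calp_{\le x})$ and verify the gluing along $s(\calp_{<x})$---but you carry it out more carefully than the paper does. The paper's own argument observes $s(\calp_{\le y})=s(\calp)_{\le[y,V(y)]}$, uses this to identify (as subsets of $s(\calp)$) the images of $s(\calp_{\setminus x})$ and $s(\calp_{\le x})$ intersecting in the image of $s(\calp_{<x})$, and then notes that every relation $a\le b$ in $s(\calp)$ is realized in one of the two pieces. This is correct in spirit, but it implicitly treats the canonical maps $s(\calp_{\setminus x})\to s(\calp)$ and $s(\calp_{<x})\to s(\calp)$ as injective, which they need not be: if $z_1,z_2<x$ with $S\subseteq V(z_1)\cap V(z_2)$ and the only common upper bound of $z_1,z_2$ in $\calp$ is $x$, then $[z_1,S]$ and $[z_2,S]$ are distinct in $s(\calp_{\setminus x})$ and in $s(\calp_{<x})$ but coincide in $s(\calp)$. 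This means the abstract pushout $Q$ could a priori be larger than the union of images inside $s(\calp)$. Your zigzag chain argument is precisely what closes this gap: you show that each consecutive identification $q_i=q_{i+1}$ is witnessed either wholly inside $s(\calp_{\setminus x})$ (when some upper bound differs from $x$, forcing $z_i,z_{i+1}\ne x$ by maximality) or by the pushout gluing of $[z_i,S]\in s(\calp_{<x})$ into $[x,S]\in s(\calp_{\le x})$ (when $x$ is the only upper bound, forcing $z_i,z_{i+1}\le x$). The key observation you lean on---that maximality of $x$ forces every neighbour of $x$ in such a zigzag to lie in $\calp_{\le x}$---is exactly what the paper leaves implicit, and your parallel treatment of the order relations completes the verification that $\varphi\colon Q\to s(\calp)$ is an isomorphism of posets.
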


\begin{proof}
	By definition  it follows easily that for each object $y\in\calp$, $s(\calp_{\le y})=s(\calp)_{\le[y,V(y)]}$. Thus, restricting attention to object sets,
	$$s(\calp_{\setminus x})\cap s(\calp_{\le x})=\bigcup_{y<x}s(\calp_{\le y})=\bigcup_{y<x}s(\calp)_{\le[y,V(y)]}=s(\bigcup_{y<x}\calp_{\le y})=s(\calp_{<x}).$$
 If $a\le b$ in $s(\calp)$, then $a\le b$ in either $s(\calp_{\le x})=s(\calp)_{[x,V(x)]}$ or $s(\calp_{\setminus x})$. The claim follows.
\end{proof}

Let $\calp$ be a finite pointed poset. By construction, $V_\calp=V_{s(\calp)}$ and $V([x,S])=S$ for $[x,S]\in s(\calp)$. Then there is a commutative diagram
$$\xymatrix{\calp\ar[r]^{\calz_{\bXbA}^\calp}\ar[d]^s&\mathbf{Top}\ar@{=}[d]\\
s(\calp)\ar[r]^{\calz_{\bXbA}^{s(\calp)}}&\mathbf{Top}}$$
for any collection of pairs of spaces $(\bXbA)=\{(X_v,A_v)\;|\; v\in V_\calp\}$. Then in particular, there is a natural map
\begin{equation}
	\label{P->s(P)}
	\calz_\calp(\bXbA)\to\calz_{s(\calp)}(\bXbA).
\end{equation}

\begin{Thm}
	\label{projection lemma}
	Let $\calp$ be a finite reduced polyhedral poset and $(\bXbA)=\{(X_v,A_v)\;|\; v\in V_\calp\}$ be a collection of NDR-pairs such that $X_v\ne A_v$ for all $v$. Then the natural map \eqref{P->s(P)} is a homotopy equivalence.
\end{Thm}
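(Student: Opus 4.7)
The plan is to apply the standard cofinality theorem for homotopy colimits. The key observation is that the functors $\calz_{\bXbA}^\calp$ and $\calz_{\bXbA}^{s(\calp)}\circ s$ agree on the nose: for any $x\in\calp$ the object $s(x)=[x,V(x)]$ has $V(s(x))=V(x)$, so both functors send $x$ to the same product in $\Top$ and the inclusion morphisms match. Thus the map in the statement is precisely the canonical comparison
\[\hocolim_\calp\bigl(\calz_{\bXbA}^{s(\calp)}\circ s\bigr)\longrightarrow\hocolim_{s(\calp)}\calz_{\bXbA}^{s(\calp)}\]
induced by $s\colon\calp\to s(\calp)$.

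By \cite[XI.9.2]{BK}, to conclude that this is a weak equivalence it suffices to prove that $s$ is homotopy cofinal, meaning that for every $y\in s(\calp)$ the undercategory $y\downarrow s$ has contractible nerve. Unwinding the definitions, for $y=[z,T]$ the category $y\downarrow s$ is (as a poset) isomorphic to the full sub-poset
\[\calq_{z,T}:=\{x\in\calp\mid T\subseteq V(x)\text{ and }[z\vee x]\nempty\}\]
of $\calp$.

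The central step will be to show that $\calq_{z,T}$ has an initial object. Since $\calp$ is polyhedral, $\calp_{\le z}$ is a finite lower semilattice; as $T\subseteq V(z)$ admits $z$ as an upper bound, and finite lower semilattices admit joins of bounded subsets, I set $x_0:=\bigvee_{\calp_{\le z}}T$. Membership $x_0\in\calq_{z,T}$ is immediate: every $v\in T$ satisfies $v\le x_0$, so $T\subseteq V(x_0)$, and $z$ is an upper bound of $x_0$ and $z$. For the universal property, take any $x\in\calq_{z,T}$. Since $\calp$ is polyhedral and $[z\vee x]\nempty$, Proposition \ref{polyhedral upper bound} provides the meet $z\wedge x$, and Proposition \ref{polyhedral lower saturated} yields $V(z\wedge x)=V(z)\cap V(x)\supseteq T$. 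Hence $z\wedge x$ lies in $\calp_{\le z}$ and is an upper bound of $T$, so by the definition of join $x_0\le z\wedge x\le x$.

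Because $\calq_{z,T}$ has an initial object its nerve is contractible, so the cofinality theorem yields the desired weak equivalence, which is upgraded to a genuine homotopy equivalence under the NDR hypothesis by Whitehead's theorem applied to the CW-type homotopy colimits. The chief obstacle, apart from bookkeeping the variance of the cofinality statement, is verifying that $x_0$ dominates every element of $\calq_{z,T}$; this is precisely where the polyhedral (equivalently, lower-saturated) structure of $\calp$ plays its decisive role, allowing one to replace $x$ by the meet $z\wedge x$ and run the comparison inside the finite lower semilattice $\calp_{\le z}$.
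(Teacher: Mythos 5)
Your approach — showing that $s\colon\calp\to s(\calp)$ is homotopy right cofinal and invoking the Bousfield--Kan cofinality theorem — is genuinely different from the paper's. The paper first appeals to Proposition \ref{hocolim-colim} (which relies on Ziegler--\u{Z}ivaljevi\'c's projection lemma and is where the reduced, NDR and $X_v\ne A_v$ hypotheses enter) to replace both homotopy colimits by colimits, and then proves by a pushout induction on the number of maximal objects that $s^*$ is a homeomorphism of colimits. Your cofinality route is cleaner and, if carried through correctly, would actually prove more (weak equivalence of homotopy colimits with no need for the NDR or $X_v\ne A_v$ hypotheses, these being used only at the final weak-to-genuine upgrade).

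However, there is a genuine gap in your identification of the undercategory. You assert that for $y=[z,T]$, the poset $y\downarrow s$ is $\calq_{z,T}=\{x\in\calp\mid T\subseteq V(x),\,[z\vee x]\nempty\}$. That is not correct: the objects of $s(\calp)$ are equivalence classes under the \emph{transitive closure} of the one-step relation, and the order $[z,T]\le[x,V(x)]$ must be read in a representative-independent way, so it can hold even when $[z\vee x]=\emptyset$ for the particular chosen representative $z$. Concretely, take $\calp$ with base point, vertices $a,b,c,d$, an object $w_0$ with $V(w_0)=\{a,b\}$, and two maximal incomparable objects $z,x$ with $V(z)=\{a,b,c\}$, $V(x)=\{a,b,d\}$ and $w_0< z,x$. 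This is a reduced polyhedral poset. With $T=\{a,b\}$ one has $[z,T]=[w_0,T]=[x,T]$, so $x\in y\downarrow s$, yet $[z\vee x]=\emptyset$, so $x\notin\calq_{z,T}$. Your argument shows $x_0=\bigvee_{\calp_{\le z}}T$ is initial in $\calq_{z,T}$, but not, as written, in the larger category $y\downarrow s$.

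The gap is repairable along the lines of your own reasoning, and this is worth making explicit. In a polyhedral poset the element $x\cdot T:=\bigvee_{\calp_{\le x}}T$ is well defined for any $x$ with $T\subseteq V(x)$, it is stable under the one-step equivalence (if $[x\vee x']\nempty$ then the joins of $T$ in $\calp_{\le x}$, $\calp_{\le x'}$ and $\calp_{\le u}$ for $u\in[x\vee x']$ all coincide), and one checks that $[z,T]=[x,T]$ if and only if $z\cdot T=x\cdot T$. Hence $y\downarrow s=\{x\mid T\subseteq V(x),\,x\cdot T=z\cdot T\}$, and for every such $x$ one has $x_0=z\cdot T=x\cdot T\le x$, so $x_0$ is initial in the \emph{full} undercategory, not merely in $\calq_{z,T}$. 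With that correction in place your cofinality argument is sound and, in my view, preferable to the paper's colimit-based induction.
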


\begin{proof}
	By Proposition \ref{hocolim-colim} it suffices to show that natural map
	\begin{equation}
		\label{colim P->s(P)}
		s^*\colon \mathrm{colim}\,\calz^\calp_{\bXbA}\to\mathrm{colim}\,\calz^{s(\calp)}_{\bXbA}
	\end{equation}
	is a homeomorphism. For each $x\in\calp$, let  $Z_x$ denote the subspace $\calz_{\bXbA}^\calp(x)\subseteq\mathrm{colim}\,\calz_{\bXbA}^\calp$. Since $\calp$ is a polyhedral poset, it follows from \eqref{Z_x} that for any maximal $x\in\calp$,
	$$\colim\,\calz^{\calp_{\setminus x}}_{\bXbA}=\bigcup_{x\ne y\in\calp}Z_y,\quad\colim\,\calz^{\calp_{\le x}}_{\bXbA}=Z_x,\quad\colim\,\calz^{\calp_{<x}}_{\bXbA}=\bigcup_{x>y\in\calp}Z_y.$$
	Thus one gets a pushout diagram of spaces
	\begin{equation}\label{Eq-pushout_P}\xymatrix{\mathrm{colim}\,\calz^{\calp_{<x}}_{\bXbA}\ar[r]\ar[d]&\mathrm{colim}\,\calz^{\calp_{\le x}}_{\bXbA}\ar[d]\\
	\colim\,\calz^{\calp_{\setminus x}}_{\bXbA}\ar[r]&\mathrm{colim}\,\calz^\calp_{\bXbA}.}\end{equation}
	Similarly, by Lemma \ref{s(P) simplicial} and \eqref{pushout SP}, one gets a pushout
	\begin{equation}\label{Eq-pushout_sP}\xymatrix{\mathrm{colim}\,\calz^{s(\calp_{<x})}_{\bXbA}\ar[r]\ar[d]&\mathrm{colim}\,\calz^{s(\calp_{\le x})}_{\bXbA}\ar[d]\\
	\mathrm{colim}\,\calz^{s(\calp_{\setminus x})}_{\bXbA}\ar[r]&\mathrm{colim}\,\calz^{s(\calp)}_{\bXbA}}\end{equation}
	and the natural maps \eqref{colim P->s(P)} give a map of diagrams from (\ref{Eq-pushout_P}) to (\ref{Eq-pushout_sP}).

	If all objects of $\calp$ are vertices, then $\calp = s(\calp)$, and \eqref{colim P->s(P)} is the identity map. Also if $\calp$ has a terminal object $x$, then the claim is obvious. Assume that for any maximal $y\in\calp$ one has $|V(y)|\le n$. Assume by induction that   \eqref{colim P->s(P)} is a homeomorphism for any poset $\calp$ in which no object has more than $n$ vertices, and the number of maximal objects $y$ of $\calp$ with $|V(y)|=n$ is less than $k$. Let  $\calp$ be a poset with $k$ maximal objects $y$ such that $|V(y)| = n$. Let $x\in\calp$ be any maximal object with this property. Then by the induction hypothesis, the maps $s^*$ from the top left and the bottom left corners of Diagram (\ref{Eq-pushout_P}) to the corresponding corners of Diagram (\ref{Eq-pushout_sP}) are homeomorphisms. The  map $s^*$ from the top right corner of (\ref{Eq-pushout_P}) to the top right corner of (\ref{Eq-pushout_sP}) is the clearly the identity map, since the posets in question have a terminal object. Hence $s^*$ on the pushout spaces
\[s^*\colon \colim_{\calp}\calz_{\bXbA}^\calp\to \colim_{s(\calp)}\calz_{\bXbA}^{s(\calp)}\]
	 is a homeomorphism as claimed.
\end{proof}

\begin{Cor}
\label{Cor-SRposet=SRst-poset}
	For a finite polyhedral poset $\calp$, there is an isomorphism
	$$\Bbbk[\calp]\cong\Bbbk[s(\calp)].$$
\end{Cor}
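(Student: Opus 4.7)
The plan is to translate the algebraic statement into a topological one via the Davis–Januszkiewicz-type identification from Corollary \ref{DJ k[P]}, and then apply the simplicial transform homotopy equivalence of Theorem \ref{projection lemma}.

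First I would reduce to the case where $\calp$ is reduced. By Corollary \ref{Cor-polyhedral_reduced} applied to the collection $\mathbf{e}$ of augmentations, there is a reduced finite polyhedral poset $\widehat{\calp}$ with $V_{\widehat{\calp}} = V_\calp$ and $\calt_\calp(\mathbf{e}) \cong \calt_{\widehat{\calp}}(\mathbf{e})$; equivalently $\Bbbk[\calp] \cong \Bbbk[\widehat{\calp}]$. At the same time, the simplicial transform is insensitive to the reduction procedure: if $x < y$ in $\calp$ with $V(x) = V(y)$, then $y \in [x \vee y]$, so $(x, V(x)) \sim (y, V(y))$ in Definition \ref{Def-Simplicial_transform}, and a short check on the relations shows that the canonical map $s(\calp) \to s(\widehat{\calp})$ is an isomorphism of posets. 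Hence $\Bbbk[s(\calp)] = \Bbbk[s(\widehat{\calp})]$, and we may replace $\calp$ by $\widehat{\calp}$ from now on.

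With $\calp$ assumed reduced, I would apply Corollary \ref{DJ k[P]} twice: once to the polyhedral poset $\calp$ itself, and once to $s(\calp)$, which is polyhedral since by Lemma \ref{s(P) simplicial} it is simplicial. This gives
\[
\Bbbk[\calp]\cong H^*(\calz_\calp(\C P^\infty,*),\Bbbk)
\quad\text{and}\quad
\Bbbk[s(\calp)]\cong H^*(\calz_{s(\calp)}(\C P^\infty,*),\Bbbk).
\]
The pair $(\C P^\infty, *)$ is an NDR-pair with $\C P^\infty \neq *$, so for the collection $(\bXbA) = \{(\C P^\infty, *)\}_{v \in V_\calp}$ the hypotheses of Theorem \ref{projection lemma} are satisfied. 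That theorem produces a natural homotopy equivalence
\[
\calz_\calp(\C P^\infty,*)\;\xrightarrow{\simeq}\;\calz_{s(\calp)}(\C P^\infty,*),
\]
and applying $H^*(-;\Bbbk)$ yields the desired algebra isomorphism $\Bbbk[s(\calp)] \cong \Bbbk[\calp]$.

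The main subtlety is not conceptual difficulty but bookkeeping: one must verify that the identification $s(\calp) \cong s(\widehat{\calp})$ is genuinely an iso of posets (both on objects and on the order relation), so that the two applications of Corollary \ref{DJ k[P]} and the projection lemma fit together consistently. Once that is in place, everything else is a direct citation.
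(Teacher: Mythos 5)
Your proposal is correct and follows the same route as the paper: cite Corollary \ref{DJ k[P]} for both $\calp$ and $s(\calp)$, then use Theorem \ref{projection lemma} to identify the two polyhedral products up to homotopy. In fact you are more careful than the paper's one-line proof, which does not explicitly address the fact that Theorem \ref{projection lemma} (via Proposition \ref{hocolim-colim}) requires $\calp$ to be reduced; you correctly flag this and supply the missing reduction step via Corollary \ref{Cor-polyhedral_reduced} together with the observation that $s(\calp)\cong s(\widehat{\calp})$. That last isomorphism is genuinely the one nontrivial check your argument introduces, and your sketch of it is plausible: if $x<y$ with $V(x)=V(y)$ and nothing strictly between them, then $[x\vee y]=\{y\}\nempty$ forces $(x,S)\sim(y,S)$ for every $S\subseteq V(x)$, and one then verifies along zig-zags that removing $y$ changes neither the object set of the simplicial transform nor the order relation. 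Since the paper itself silently assumes this compatibility, your fuller treatment is an improvement rather than a divergence.
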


\begin{proof}
	This follows at once from Corollary \ref{DJ k[P]} and Theorem \ref{projection lemma}.
\end{proof}



\section{$f$-vectors of regular polyhedral posets}
\label{Sec-Regular_Polyhedral_Posets}

Let $\calp$ be a finite pointed poset. Define $||\calp||=\max\{|V_\calp(x)|\,\vert\,x\in\calp\}-1$, and for each $i\geq 0$, let \hadgesh{$f_i(\calp)$}  be the number of objects  $x\in\calp$ such that $|V_\calp(x)|=i+1$ for $i\ge 0$. Then in particular, $f_{-1}(\calp)=1$. We call the sequence $(f_0(\calp),\ldots,f_{|\calp|}(\calp))$ the \hadgesh{$f$-vector of $\calp$}. In this section, we compare the $f$-vectors of  finite polyhedral posets satisfying a certain regularity condition and their simplicial transformation $s(\calp)$. As a result we obtain a computation of the Poincar\'e series of the Stanley-Reisner ring of a finite polyhedral poset satisfying this condition in terms of the $f$-vector.

\begin{Defi}
	A finite pointed poset $\calp$ is said to be \hadgesh{regular} if $\calp_{\le x}\cong\calp_{\le y}$ for any objects $x, y\in\calp$ such that $|V(x)|=|V(y)|$.
\end{Defi}

A finite regular poset is automatically reduced. Notice also that in any finite pointed poset $\calp$, and any $x\in\calp$, the objects of the simplicial transform $s(\calp_{\le x})$ can be written in the form $[x, S]$, where $S\subseteq V(x)$.

\begin{Prop}
	\label{Prop-f(s(P))}
	Let $\calp$ be a finite regular polyhedral poset, and let $k$ be a nonnegative integer. Fix an object $x\in\calp$ such that $|V(x)|=k+1$.
	Set $\nu_{0,k}(\calp) = 0$, and for $1\le i\lneq k$ let \hadgesh{$\nu_{i,k}(\calp)$ }be the number of all objects $[x,S]\in \obj(s(\calp_{\le x}))$, such that
	\begin{enumerate}[\rm i)]
	\item $|S|=i+1$, and
	\item there is no $y\lneq x$ such that $S\subseteq V(y)$.
	\end{enumerate}
	Then for $i\ge 0$,
	$$f_i(s(\calp))=f_i(\calp)+\sum_{k=i+1}^{|\calp|}f_k(\calp)\nu_{i,k}(\calp).$$
\end{Prop}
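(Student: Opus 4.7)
My plan is to identify each object of $s(\calp)$ with a canonical pair $(w,S)$ in $\calp$, then count such pairs by partitioning on $|V(w)|$ and using regularity to reduce the inner count to $\nu_{i,k}(\calp)$.

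The first and most delicate step is to show that every equivalence class $[y,S] \in \obj(s(\calp))$ has a unique minimum representative: a smallest $w \in \calp$ with $[w,S]=[y,S]$. By induction on the length of a chain witnessing $(z_1,S)\sim(z_2,S)$, I will show that there exists $w$ in the class with $w\le z_1,z_2$. The inductive step uses the polyhedral hypothesis (Proposition \ref{polyhedral upper bound}): if $w'$ is a common lower bound in the class of $w_0,\ldots,w_{n-1}$ and $u$ is a common upper bound of $w_{n-1},w_n$, then $w'\le w_{n-1}\le u$ and $w_n\le u$, so $w'\wedge w_n$ exists, lies below both, contains $S$ in its vertex set by Proposition \ref{polyhedral lower saturated}, and hence lies in the class. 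Finiteness then forces a unique minimum (any two minima have a common lower bound in the class, which must equal each of them).

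Using that $\calp$ is reduced (a consequence of regularity), I will then characterize these minima: $w$ is the minimum of its class if and only if $S\subseteq V(w)$ and $S\not\subseteq V(y')$ for every $y'\lneq w$. Indeed, any such $y'$ would give the direct relation $(y',S)\sim(w,S)$ via the upper bound $w$, contradicting minimality; conversely any $w$ with this property is automatically the minimum of its own class. This produces a bijection
\[
\obj(s(\calp)) \longleftrightarrow \bigl\{(w,S)\;:\;S\subseteq V(w),\;S\not\subseteq V(y')\text{ for all }y'\lneq w\bigr\}.
\]

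Finally, I will restrict to $|S|=i+1$ and split the right-hand side according to $k=|V(w)|-1$. Regularity forces the inner count over subsets of $V(w)$ to depend only on $k$, not on $w$, and by definition this count equals $\nu_{i,k}(\calp)$ for $i<k$. The case $i=k$ is degenerate: the only candidate is $S=V(w)$, and for reduced $\calp$ any $y'\lneq w$ satisfies $V(y')\subsetneq V(w)=S$ automatically, so this contributes the $f_i(\calp)$ summand. The edge case $i=0$, $k\ge 1$ is consistent with $\nu_{0,k}(\calp)=0$, since any singleton $S=\{v\}\subseteq V(w)$ fails the minimality condition at $w$ (as $v\lneq w$ already contains $S$). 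Summing over $k$ gives the stated identity. The main obstacle is the first step -- existence and uniqueness of canonical representatives -- which is exactly where the polyhedral hypothesis does real work; the rest is book-keeping.
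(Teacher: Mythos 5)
Your proof is correct and follows the same decomposition as the paper's: each object of $s(\calp)$ is counted via a canonical representative $(w,S)$ with $w$ minimal subject to $S\subseteq V(w)$, and the pairs are then partitioned by $k=|V(w)|-1$. Where you improve on the paper is in making the key lemma explicit: by inducting along a zig-zag and using Propositions \ref{polyhedral upper bound} and \ref{polyhedral lower saturated} to produce common lower bounds whose vertex sets still contain $S$, you prove that every equivalence class in $s(\calp)$ has a unique minimum representative, characterized precisely by condition ii) of $\nu_{i,k}$. The paper leaves this point implicit (it tacitly assumes one-step relations suffice and argues somewhat obliquely via maximal lower bounds in $\calp_{\le z}$), so your version is a clean, gap-filling exposition of the same idea; the subsequent bookkeeping over $k=i$, $k>i$, and the $i=0$ sanity check is all handled correctly.
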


\begin{proof}
Notice first that $\nu_{i,k}(\calp)$ is well defined, since the isomorphism type of $\calp_{\le x}$ depends only on $|V(x)|$ and not on a specific choice of the object $x$. Also the statement clearly holds for $i=0$, so we may assume $i\geq 1$.

By Definition $f_i(s(\calp))$ is the number of objects $[x,S]\in s(\calp)$ with $|S| = i+1$. Clearly, for each $x\in\calp$ such that $|V(x)| = i+1$, the object $[x, V(x)]\in s(\calp)$ contributes 1 to the count $f_i(s(\calp))$.  By Proposition \ref{Prop-Simp_emb} the functor $s\colon \calp\to s(\calp)$ is an embedding, and so objects of this form contribute $f_i(\calp)$ to $f_i(s(\calp))$.

The remaining objects have the form $[x,S]$, where $S\subsetneq V(x)$, $|S|=i+1$, and there is no $y\le x'\in\calp$ such that $S\subseteq V(x')$, $[x\vee x']\nempty$ and $S=V(y)$ (or else $[x,S]=[x',S] = [y, S]\in\Ima(s)$; see Remark \ref{Ref-Obj_rep}).  But if such $x'$ that is not comparable to $x$,  and $y\leq x'$ that satisfy these conditions exist, then for any $z\in[x\vee x']$ the sub-poset  $\calp_{\le z}$ is not a lower semilattice. To see this, assume the converse. Then $S$ consists of at least 2 of vertices common to both $x$ and $x'$. By condition ii), $x$ is a minimal upper bound for $S$ in $\calp_{\le x}$. Let $a\in \calp_{\le x'}$  be a minimal upper bound for $S$ in  $\calp_{\le x'}$. Then $a\neq x$, and every $v\in S$ is a maximal lower bound for $x$ and $a$ in $\calp_{\le z}$.

If $x\in\calp$ is such that $|V(x)|=k+1$, then the number of objects of the form $[x,S]\in \obj(s(\calp))\setminus \Ima(s)$ that are not equivalent to some $[y, S]$ for any $y$ with $|V(y)|\leq k$ is exactly $\nu_{i,k}(\calp)$. Hence the remaining objects are counted by the sum $\sum_{k\geq  i+1} f_k(\calp)\nu_{i,k}(\calp)$. This completes the proof.
\end{proof}

For a fixed  finite regular polyhedral poset $\calp$ and a positive integer $k$, let \hadgesh{$\calp[k]$} denote a poset that is isomorphic to $\calp_{\le x}$ for any $x\in\calp$ with $|V(x)| = k+1$.  We obtain an immediate corollary.

\begin{Cor}
\label{Cor-nuP=nuP[k]}
Let $\calp$ be a finite regular polyhedral poset. Then For every $0\le i\lneq k\le n\le |\calp|$, $\nu_{i,k}(\calp) = \nu_{i,k}(\calp[n])$.
\end{Cor}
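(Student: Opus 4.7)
The plan is to show that $\nu_{i,k}$ depends only on the isomorphism class of the principal down-set $\calp[k]$, after which the identity $\nu_{i,k}(\calp)=\nu_{i,k}(\calp[n])$ reduces to the observation that $\calp[n][k]\cong\calp[k]$.

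First I would verify that $\calp[n]$ is itself a finite regular polyhedral poset, so that $\nu_{i,k}(\calp[n])$ is a legitimate expression. Fixing any $x_n\in\calp$ with $|V(x_n)|=n+1$, one has $\calp[n]\cong\calp_{\le x_n}$. For every $y\le x_n$, the sub-poset $(\calp_{\le x_n})_{\le y}=\calp_{\le y}$ is a lower semilattice by Definition~\ref{Def-polyhedral_poset}, hence $\calp[n]$ is polyhedral. Regularity of $\calp[n]$ is inherited from regularity of $\calp$ by the same observation. Moreover, since the vertex set of a pointed poset is exactly the set of its minimal non-initial objects, the vertex set of $\calp[n]$ is $V(x_n)$, and for every $y\le x_n$ the equality $V_\calp(y)=V_{\calp[n]}(y)$ holds.

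Next I would observe that the definition of $\nu_{i,k}$ is intrinsic to the isomorphism class $\calp[k]$. Indeed, choosing any object $x$ of vertex-size $k+1$, the count $\nu_{i,k}(\calp)$ involves only subsets $S\subseteq V(x)$ of size $i+1$ and the relation ``there exists $y\lneq x$ with $S\subseteq V(y)$''. All of this data lives inside $\calp_{\le x}\cong\calp[k]$, and since a poset isomorphism automatically carries vertices to vertices (the vertex set being determined by the order structure), any two choices of $x$ with the same vertex-size give equal counts; in particular $\nu_{i,k}$ depends only on the isomorphism class of the corresponding down-set.

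Finally, to conclude I would pick $x_n\in\calp$ with $|V(x_n)|=n+1$ and $x'\in\calp[n]$ with $|V_{\calp[n]}(x')|=k+1$, as required for $\nu_{i,k}(\calp[n])$ to be defined. Under the identification $\calp[n]\cong\calp_{\le x_n}$, $x'$ becomes an element of $\calp$ with $|V_\calp(x')|=k+1$, so $\calp_{\le x'}\cong\calp[k]$ by regularity of $\calp$. On the other hand $(\calp[n])_{\le x'}=\calp_{\le x'}$, so $\calp[n][k]\cong\calp[k]$ and the previous step yields the desired equality. The only point requiring attention is the transport of vertex sets along the isomorphisms supplied by regularity, which is automatic because vertices are definable intrinsically from the poset structure; beyond this bookkeeping no substantive obstacle is expected.
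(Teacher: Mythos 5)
Your proof is correct and reflects precisely the reasoning the paper treats as immediate. The key facts you supply -- that $\calp[n]$ inherits the polyhedral and regular properties, that $V_{\calp[n]}(y)=V_\calp(y)$ for $y\le x_n$, that $\nu_{i,k}$ is an invariant of the isomorphism type of the down-set $\calp[k]$, and that $(\calp[n])_{\le x'}=\calp_{\le x'}$ so $\calp[n][k]\cong\calp[k]$ -- are exactly the bookkeeping the authors leave to the reader, and you have carried it out correctly (the case $i=0$ is trivially covered since $\nu_{0,k}=0$ on both sides by definition).
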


Proposition \ref{Prop-f(s(P))} allows us to obtain a recursive formula for $\nu_{i,k}(\calp)$ from the $f$-coordinates  $f_j(\calp[k])$.

\begin{Prop}
\label{Prop-nu_ik}
Let $\calp$ be a finite regular polyhedral poset. Then
	 for each $0\le i \lneq n$,
\[\nu_{i,n}(\calp) = {n+1\choose i+1} -\left(f_i(\calp[n]) + \sum_{k=i+1}^{n-1}f_k(\calp[n])\nu_{i,k}(\calp) \right).\]
\end{Prop}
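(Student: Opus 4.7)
The plan is to apply Proposition \ref{Prop-f(s(P))} to the poset $\calp[n]$ itself. Recall that $\calp[n]$ is, by definition, isomorphic to $\calp_{\le x}$ for any $x\in\calp$ with $|V(x)|=n+1$, so $\calp[n]$ is a finite regular polyhedral poset with a terminal object $x$ and $\|\calp[n]\|=n$. Moreover $f_n(\calp[n])=1$, since $x$ is the unique element with vertex set of maximum size.

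First I would show that $f_i(s(\calp[n])) = \binom{n+1}{i+1}$ for each $0\le i\le n$. Since $x$ is terminal in $\calp[n]$, any two objects $y,z\in\calp[n]$ admit the common upper bound $x$, so $[y\vee z]\nempty$. By Definition \ref{Def-Simplicial_transform} this means that $[y,S]=[z,S]$ whenever $S\subseteq V(y)\cap V(z)$, and in particular every class $[y,S]$ is represented by $[x,S]$ with $S\subseteq V(x)$. The assignment $[x,S]\mapsto S$ thus identifies $s(\calp[n])$ with the boolean algebra $2^{V(x)}$ on $n+1$ vertices, and counting subsets of cardinality $i+1$ gives the claimed binomial.

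Next I apply Proposition \ref{Prop-f(s(P))} to $\calp[n]$:
\[
\binom{n+1}{i+1}=f_i(s(\calp[n]))=f_i(\calp[n])+\sum_{k=i+1}^{n}f_k(\calp[n])\,\nu_{i,k}(\calp[n]).
\]
Splitting off the top term $k=n$, using $f_n(\calp[n])=1$, and invoking Corollary \ref{Cor-nuP=nuP[k]} to rewrite $\nu_{i,k}(\calp[n])=\nu_{i,k}(\calp)$ for $i<k\le n$, I obtain
\[
\binom{n+1}{i+1}=f_i(\calp[n])+\nu_{i,n}(\calp)+\sum_{k=i+1}^{n-1}f_k(\calp[n])\,\nu_{i,k}(\calp).
\]
Solving for $\nu_{i,n}(\calp)$ then yields the desired recursion.

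The only place requiring genuine care is the first step, identifying $s(\calp[n])$ with the boolean algebra on $V(x)$; everything else is bookkeeping. The argument works because $\calp[n]$ has a terminal element so no obstruction of the form considered in the proof of Proposition \ref{Prop-f(s(P))} (noncomparable upper bounds whose join is missing) can occur in $\calp[n]$, collapsing all equivalence classes down to one per subset of $V(x)$.
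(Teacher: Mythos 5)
Your proof is correct and takes essentially the same route as the paper: apply Proposition~\ref{Prop-f(s(P))} to $\calp[n]$, identify $s(\calp[n])$ with the face poset of an $n$-simplex so $f_i(s(\calp[n]))=\binom{n+1}{i+1}$, substitute $\nu_{i,k}(\calp[n])=\nu_{i,k}(\calp)$ via Corollary~\ref{Cor-nuP=nuP[k]}, and solve for $\nu_{i,n}(\calp)$ using $f_n(\calp[n])=1$. You spell out why the terminal object of $\calp[n]$ forces $s(\calp[n])$ to collapse to the boolean algebra on $V(x)$, a step the paper states without elaboration; otherwise the two arguments coincide.
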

\begin{proof}
We claim that the following identity holds:
$$f_i(\calp[n])+\sum_{k=i+1}^{n}f_k(\calp[n])\nu_{i,k}(\calp)=\binom{n+1}{i+1}.$$
This
 follows at once from Proposition \ref{Prop-f(s(P))} and Corollary \ref{Cor-nuP=nuP[k]}, since $\nu_{i,k}(\calp) = \nu_{i,k}(\calp[n])$ and
since $s(\calp[n])$ is isomorphic to the face poset of an $n$-simplex, and as such has the stated number of $i$-faces. The proposition follows at once, since $f_n(\calp[n]) = 1$.
\end{proof}

\begin{Exmp}
	Let $\calp$ be a finite polyhedral poset, such that for each $x\in\calp$ the sub-poset $\calp_{\le x}$ is isomorphic to the face poset of the $n$-dimensional cube $\calc^n$, for some $n$.
	 Then $\calp$ is regular, and $\nu_{i,j}(\calp)$ can be computed recursively by the formula
	 \[ \nu_{i,j}(\calp) =
	 	{2^r\choose i+1} -\left(f_i(\calc^r) + \sum_{t= \lceil \log_2(i+2)\rceil}^{r-1} 2^{r-t}{r\choose t}\nu_{i,2^t-1}(\calp) \right)\]
if $j=2^r-1$ and $\nu_{i,j}(\calp)=0$ otherwise
\end{Exmp}
\begin{proof}
An $m$-cube has $2^m$ vertices and for each $0\le k\le m$ it has $2^{m-k}{m\choose k}$ $k$-faces. Hence
\[f_i(\calc^m) = \begin{cases}2^{m-k}\binom{m}{k}& \text{if}\; i=2^k-1\\0&\text{otherwise}.\end{cases}.\]
By definition $\nu_{i,k}(\calp) = 0$ unless $k=2^r-1$ for some $r>0$, since there are no objects in the face poset of a cubical complex whose number of vertices is not a power of 2.

Set $n=2^r-1$ in Proposition \ref{Prop-nu_ik}:
\begin{align*}
\nu_{i,2^r-1}(\calp) = &{2^r\choose i+1} -\left(f_i(\calc^r) + \sum_{k=i+1}^{2^r-2}f_k(\calc^r)\nu_{i,k}(\calp) \right) =\\
&{2^r\choose i+1} -\left(f_i(\calc^r) + \sum_{t= \lceil \log_2(i+2)\rceil}^{r-1} f_{2^t-1}(\calc^r)\nu_{i,2^t-1}(\calp) \right)=\\
&{2^r\choose i+1} -\left(f_i(\calc^r) + \sum_{t= \lceil \log_2(i+2)\rceil}^{r-1} 2^{r-t}{r\choose t}\nu_{i,2^t-1}(\calp) \right).
\end{align*}
\end{proof}

Notice that in the example above $\nu_{i,j}(\calp)$ may be nonzero for $i\neq 2^a-1$. For instance for $i=2$ and $r=2$,
$\nu_{2,3}(\calp) = {4\choose 3} - f_2(\calc^2)  = 4$, corresponding to the four 2-faces of $s(\calc^2) = \Delta[3]$, the standard 3-simplex.

For a free graded $\Bbbk$-module $M$, let $P_M(t)$ denote the Poincar\'e series of  $M$. We end with a formula for the Poincar\'e series for the Stanley-Reisner algebra over a finite regular polyhedral poset.

\begin{Prop}
  \label{f-vector}
	Let $\calp$ be a finite regular polyhedral poset and let $\Bbbk[\calp]$ denote its graded Staley-Reisner algebra, where for each $x\in\calp$, the corresponding generator has degree $|x|=|V_\calp(x)|$. Then
  $$P_{\Bbbk[\calp]}(t)=\sum_{i=-1}^\infty\frac{(f_i(\calp)+\sum_{k=i+1}^{|\calp|}f_k(\calp)\nu_{i,k}(\calp))t^{i+1}}{(1-t)^{i+1}}.$$
\end{Prop}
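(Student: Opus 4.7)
The plan is to reduce the problem to the classical Hilbert series formula for Stanley-Reisner rings of simplicial posets. By Corollary~\ref{Cor-SRposet=SRst-poset} there is a graded isomorphism $\Bbbk[\calp] \cong \Bbbk[s(\calp)]$: the natural map $s \colon \calp \to s(\calp)$ sends $x$ to $[x, V(x)]$, and under the stated grading convention every element carries degree equal to the size of its vertex set, so the isomorphism preserves degree. It therefore suffices to compute $P_{\Bbbk[s(\calp)]}(t)$, and by Lemma~\ref{s(P) simplicial}, $s(\calp)$ is a simplicial poset.

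For a simplicial poset $\mathcal{Q}$, Stanley's classical Hilbert series formula (see \cite{S2}) asserts
$$P_{\Bbbk[\mathcal{Q}]}(t) \;=\; \sum_{\sigma \in \mathcal{Q}} \frac{t^{|V(\sigma)|}}{(1-t)^{|V(\sigma)|}} \;=\; \sum_{i=-1}^{\|\mathcal{Q}\|} \frac{f_i(\mathcal{Q})\, t^{i+1}}{(1-t)^{i+1}},$$
the second equality coming from grouping the sum according to $i = |V(\sigma)| - 1$. One proves the first equality by using the boolean structure of each $\mathcal{Q}_{\leq \sigma}$ together with the defining relations of $\Bbbk[\mathcal{Q}]$ (Definition~\ref{Def-SR-ring-Simplicial-Poset}) to exhibit a $\Bbbk$-module basis of $\Bbbk[\mathcal{Q}]$ in which each $\sigma \in \mathcal{Q}$ contributes the monomials $\sigma \cdot \prod_{v \in V(\sigma)} v^{a_v}$ with $a_v \geq 0$, accounting for a contribution of $t^{|V(\sigma)|}/(1-t)^{|V(\sigma)|}$ to the Poincaré series.

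Applying the displayed formula with $\mathcal{Q} = s(\calp)$ and substituting
$$f_i(s(\calp)) \;=\; f_i(\calp) \,+\, \sum_{k=i+1}^{|\calp|} f_k(\calp)\, \nu_{i,k}(\calp)$$
from Proposition~\ref{Prop-f(s(P))} yields exactly the formula claimed. The main obstacle, and the only non-formal input, is Stanley's Hilbert series formula for simplicial posets; once this classical fact is established (via the normal-form argument sketched above), the remainder of the proof is purely bookkeeping in the identifications $\Bbbk[\calp] \cong \Bbbk[s(\calp)]$ and the vertex-count recursion for $f_i(s(\calp))$.
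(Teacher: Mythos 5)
Your proof is correct and follows essentially the same route as the paper's: pass to $s(\calp)$ via Corollary \ref{Cor-SRposet=SRst-poset} and Lemma \ref{s(P) simplicial}, invoke Stanley's Hilbert series formula for Stanley-Reisner rings of simplicial posets, and substitute the expression for $f_i(s(\calp))$ from Proposition \ref{Prop-f(s(P))}. The paper simply cites Stanley for the simplicial-poset formula rather than sketching the monomial-basis argument, and treats the degree-preservation under $\Bbbk[\calp]\cong\Bbbk[s(\calp)]$ as implicit, so your version is slightly more explicit but not materially different.
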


\begin{proof}
By Corollary \ref{Cor-SRposet=SRst-poset}, $\Bbbk[\calp]\cong\Bbbk[s(\calp)]$, and by Lemma \ref{s(P) simplicial}, $s(\calp)$ is a simplicial poset. For any simplicial poset $\cals$, and with the same degree convention on generators
 \begin{equation}\label{SR-Poincare}
 P_{\Bbbk[\mathcal{S}]}(t)=\sum_{i=-1}^\infty\frac{f_i(\mathcal{S})t^{i+1}}{(1-t)^{i+1}},\end{equation}
  by \cite[precise statement]{S1}. By Proposition \ref{Prop-f(s(P))} , $f_i(s(\calp))=f_i(\calp)+\sum_{k=i+1}^{|\calp|}f_k(\calp)\nu_{i,k}(\calp).$ The claim follows by substituting this expression in (\ref{SR-Poincare}).
\end{proof}

\subsection{Acknowledgements}
Daisuke Kishimoto is partly supported by a JSPS KAKENHI grant No. 17K05248.
Ran Levi is partly supported by an EPSRC grant EP/P025072/1. The authors are grateful to the University of Kyoto and the University of Aberdeen for their kind hospitality.

\end{document}